\newcommand{\displaybump}{\hbox to \@totalleftmargin{\hfil}}
\newtheorem{theorem}{Theorem}
\newtheorem{corollary}[theorem]{Corollary}
\newtheorem{definition}[theorem]{Definition}
\newtheorem{example}[theorem]{Example}
\newtheorem{lemma}[theorem]{Lemma}
\newtheorem{proposition}[theorem]{Proposition}
\newtheorem{remark}[theorem]{Remark}
\newtheorem*{definition*}{Definition}
\newtheorem*{theorem*}{Theorem}
\begin{document}

\title{Euclidean numbers and numerosities}
\author{Vieri Benci\thanks{%
Dipartimento di Matematica, Universit\`{a} degli Studi di Pisa, Via F.
Buonarroti 1/c, 56127 Pisa, Italy, e-mail: vieri.benci@unipi.it.}, Lorenzo
Luperi Baglini\thanks{%
Dipartimento di Matematica, Universit\`{a} degli Studi di Milano, Via C.
Saldini 50, 20133 Milano, Italy, e-mail: lorenzo.luperi@unimi.it, supported
by grant P 30821-N35 of the Austrian Science Fund FWF.}}
\maketitle

\begin{abstract}
Several different versions of the theory of numerosities have been
introduced in the literature. Here, we unify these approaches in a
consistent frame through the notion of set of labels, relating numerosities
with the Kiesler field of Euclidean numbers. This approach allows to easily
introduce, by means of numerosities, ordinals and their natural operations,
as well as the Lebesgue measure as a counting measure on the reals.
\end{abstract}

\tableofcontents

\section{Introduction}

The techniques of nonstandard analysis allow to construct several different
hyperreal fields which, for many pratical purposes, are equivalent. However,
there is a unique hyperreal field which is isomorphic to the closed real
field having the cardinality of the first strongly inaccessible\footnote{$%
\kappa $ is strongly inaccessible if it is uncountable, it is not a sum of
fewer than $\kappa $ cardinals smaller than $\kappa $ and, for all $\alpha
<\kappa $, $2^{\alpha }<\kappa $.} uncountable cardinal number. Such a field
has been introduced in \cite{keisler76} and we refer to it as to the Keisler
field.

Given a ring of sets $\mathcal{R}$ (closed for cartesian product) and a non
Archimedean field $\mathbb{K}$, the numerosity is a function 
\begin{equation}
\mathfrak{num}:\mathcal{R} \rightarrow \mathbb{K}  \label{kappa}
\end{equation}%
which satisfies the following properties:

\begin{itemize}
\item \textbf{Finite sets principle}: if $A$ is a finite set, then $%
\mathfrak{num}\left( A\right) =| A|$ ($|A|$ denotes the cardinality of $A$);

\item \textbf{Euclid's principle}: if $A\subset B$, then $\mathfrak{num}%
\left( A\right) <\mathfrak{num}\left( B\right)$;

\item \textbf{Sum principle}: if $A\cap B=\varnothing $, then $\mathfrak{num}%
\left( A\cup B\right) =\mathfrak{num}\left( A\right) +\mathfrak{num}\left(
B\right)$;

\item \textbf{Product principle}: $\mathfrak{num}\left( A\times B\right) =%
\mathfrak{num}\left( A\right) \cdot \mathfrak{num}\left( B\right) .$
\end{itemize}

The notion of numerosity has been introduced in \cite{benci95b},\cite%
{BDN2003} and developed in several directions (\cite{BDN2018},\cite{BDNF1},%
\cite{BDNB1},\cite{BDNB},\cite{BF},\cite{QSU},\cite{DNFtup},\cite{FM},\cite%
{mancu})\footnote{%
See also \cite{BF2019} for a historical survay of the ideas related to the
measure of the size of infinite sets.}. Since its beginning, numerosity
theory has been strictly related to some hyperreal field, namely the field $%
\mathbb{K}$ in (\ref{kappa}) must be hyperreal.

The aim of this paper is to relate the theory of numerosity to the Keisler
field in such a way that most of the properties investigated in the previous
papers are preserved and unified in a consistent frame.

In particular, we want at least the following three properties to be
satisfied:

\begin{itemize}
\item (\textbf{Consistency with the theory of cardinal numbers}) if $%
A,B\subset \mathbb{A}$ then%
\begin{equation*}
\left\vert A\right\vert <\left\vert B\right\vert \Rightarrow \mathfrak{num}%
\left( A\right) <\mathfrak{num}\left( B\right) .
\end{equation*}%
where $\left\vert E\right\vert $ denotese the cardinality of $E$.

\item (\textbf{Consistency with the theory of ordinal numbers}) if $\mathbf{%
COrd}$ is the set of the Cantor ordinal numbers smaller than the first
inaccessible uncountable cardinal number and $\mathbf{Num}$ is the set of
numerosities, then there is a map%
\begin{equation*}
\Psi :\mathbf{COrd}\rightarrow \mathbf{Num}
\end{equation*}%
such that

\begin{enumerate}
\item $\Psi (\sigma )=\mathfrak{num}\left( \{\Psi (\tau )\mid \tau <\sigma
\}\right) $;

\item $\Psi \left( \sigma \oplus \tau \right) =\Psi \left( \sigma \right)
+\Psi \left( \tau \right) ;$

\item $\Psi \left( \sigma \otimes \tau \right) =\Psi \left( \sigma \right)
\cdot \Psi \left( \tau \right) ,$
\end{enumerate}
\end{itemize}

where $\oplus $ and $\otimes $ denote the natural operations between ordinal
numbers (see Section \ref{SPO}).

\begin{itemize}
\item (\textbf{Consistency with the Lebesgue measure}) if $E\subset \mathbb{R%
}$ is a Lebesgue measurable set, then%
\begin{equation}
m_{L}\left( E\right) =st\left( \frac{\mathfrak{num}\left( E\right) }{%
\mathfrak{num}\left( \left[ 0,1\right) \right) }\right) ,  \label{m}
\end{equation}%
where $m_{L}\left( E\right) $ denotes the Lebesgue measure of $E$ and $%
st(\xi )$ denotes the standard part of $\xi $.
\end{itemize}

To this aim we build a field which, following \cite{BF}, we will call field
of Euclidean numbers. This field is isomorphic to the Keisler field and its
construction presents an extra structure that allows to build a numerosity
theory which satisfies, among others, the above requests.

\section{The Euclidean numbers}

In this section we introduce the field of Euclidean numbers. As we are going
to show, this is a hyperreal field constructed by means of a minor
modification of the usual superstructure construction, so to implement a
development of the theory of numerosity with certain useful peculiarities
(see Remark \ref{figa}).

\subsection{Non Archimedean fields\label{naf}}

Here, we recall the basic definitions and some facts regarding non
Archimedean fields. In the following, ${\mathbb{K}}$ will denote an ordered
field. We recall that such a field contains (a copy of) the rational
numbers. Its elements will be called numbers.

\begin{definition}
Let $\mathbb{K}$ be an ordered field. Let $\xi \in \mathbb{K}$. We say that:

\begin{itemize}
\item $\xi $ is infinitesimal if, for all positive $n\in \mathbb{N}$, $|\xi
|<\frac{1}{n}$;

\item $\xi $ is finite if there exists $n\in \mathbb{N}$ such that $|\xi |<n$%
;

\item $\xi $ is infinite if, for all $n\in \mathbb{N}$, $|\xi |>n$
(equivalently, if $\xi $ is not finite).
\end{itemize}
\end{definition}

\begin{definition}
An ordered field $\mathbb{K}$ is called non Archimedean if it contains an
infinitesimal $\xi \neq 0$.
\end{definition}

Infinitesimal numbers can be used to formalize the notion of "infinitely
close":

\begin{definition}
\label{def infinite closeness} We say that two numbers $\xi ,\zeta \in {%
\mathbb{K}}$ are \textbf{infinitely close} if $\xi -\zeta $ is
infinitesimal. In this case we write $\xi \sim \zeta $.
\end{definition}

Clearly, the relation "$\sim $" of infinite closeness is an equivalence
relation.

\begin{theorem}
If $\mathbb{K\supset R}$ is an ordered field, then it is non Archimedean and
every finite number $\xi \in \mathbb{K}$ is infinitely close to a unique
real number $r\sim \xi $, called the the \textbf{standard part} of $\xi $.
\end{theorem}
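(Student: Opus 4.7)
The plan is to split the statement into two assertions and prove them in turn, assuming implicitly $\mathbb{K}\supsetneq\mathbb{R}$ (otherwise the first assertion fails, and the second is trivially true since the only infinitesimal in $\mathbb{R}$ is $0$).

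For the first assertion (non-Archimedean), I would pick any $\xi\in\mathbb{K}\setminus\mathbb{R}$ and split into cases: if $\xi$ is infinite (not bounded by any $n\in\mathbb{N}$), then $1/\xi$ is a nonzero infinitesimal, so we are done. If $\xi$ is finite, then the approach is to use completeness of $\mathbb{R}$: the set $A=\{r\in\mathbb{R}:r<\xi\}$ is nonempty (bounded below by $-n$ for some $n$) and bounded above (by any $n$ with $|\xi|<n$), so $s:=\sup A\in\mathbb{R}$ exists. Because $\xi\notin\mathbb{R}$, necessarily $\xi\neq s$, so $\xi-s$ is a nonzero element of $\mathbb{K}$; I would then argue (as in the existence argument for the standard part below) that $\xi-s$ is infinitesimal.

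For the existence of the standard part, I would take any finite $\xi\in\mathbb{K}$ and again let $s:=\sup\{r\in\mathbb{R}:r<\xi\}$, which exists in $\mathbb{R}$ by completeness. To see $\xi\sim s$, suppose for contradiction that $|\xi-s|>1/n$ for some positive integer $n$. If $\xi>s+1/n$, then $s+1/(2n)\in A$ is strictly greater than $s$, contradicting the supremum property; if $\xi<s-1/n$, then $s-1/(2n)$ is an upper bound of $A$ strictly less than $s$, again a contradiction. Hence $\xi-s$ is infinitesimal, i.e.\ $\xi\sim s$.

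Uniqueness is the easiest piece: if $\xi\sim r$ and $\xi\sim r'$ with $r,r'\in\mathbb{R}$, then $r-r'=(r-\xi)+(\xi-r')$ is infinitesimal by standard closure properties of the ideal of infinitesimals, but it also belongs to $\mathbb{R}$, where the only infinitesimal is $0$; hence $r=r'$. The only nontrivial step is the case split in the existence argument, which hinges on the completeness of $\mathbb{R}$; the rest is bookkeeping. I do not expect any real obstacle, since all ingredients (supremum in $\mathbb{R}$, standard bounds, and the trivial fact that $\mathbb{R}$ has no nonzero infinitesimals) are classical.
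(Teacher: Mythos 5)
Your proposal is correct, but there is nothing in the paper to compare it against: the paper states this theorem without proof, as classical background on non-Archimedean fields (it is the standard fact that every proper ordered field extension of $\mathbb{R}$ is non-Archimedean and admits a standard part map on its finite elements). Your argument is the usual one via Dedekind completeness, and its structure is efficient: the single supremum construction $s:=\sup\{r\in\mathbb{R}\mid r<\xi\}$ simultaneously yields the standard part of any finite $\xi$ and, applied to a finite $\xi\in\mathbb{K}\setminus\mathbb{R}$, produces the nonzero infinitesimal $\xi-s$ needed for non-Archimedeanness; the infinite case is correctly dispatched by inverting. You are also right to read $\mathbb{K}\supset\mathbb{R}$ as proper inclusion, since otherwise the first assertion fails for $\mathbb{K}=\mathbb{R}$; this matches the paper's usage elsewhere (e.g.\ $\mathbb{E}\supset\mathbb{R}$). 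Two trivial bookkeeping remarks. First, the negation of ``$\xi\sim s$'' gives some $n$ with $|\xi-s|\geq 1/n$, not $|\xi-s|>1/n$; this is harmless, since $|\xi-s|\geq 1/n$ implies $|\xi-s|>1/(n+1)$ and your two-case supremum argument runs verbatim with $n+1$ in place of $n$. Second, you tacitly use that the order of $\mathbb{K}$ restricts on $\mathbb{R}$ to the usual order; this is automatic because $\mathbb{R}$ carries a unique ordered-field structure (nonnegative reals are exactly the squares), so it is fine to leave unsaid.
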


The standard part can be regarded as a function:%
\begin{equation}
st:\left\{ x\in \mathbb{K}\mid x\ \text{is finite}\right\} \rightarrow 
\mathbb{R}.  \label{sh}
\end{equation}%
Moreover, with some abuse of notation, we can extend $st\ $to all $\mathbb{K}
$ by setting%
\begin{equation*}
st\left( \xi \right) =\left\{ 
\begin{array}{cc}
+\infty & \text{if\ }\xi \ \text{is a positive infinite number;} \\ 
-\infty & \text{if\ }\xi \ \text{is a negative infinite number.}%
\end{array}%
\right.
\end{equation*}

\subsection{Construction of the Euclidean numbers}

Given any set $E$ we let $\mathbb{V}(E)$ be the superstructure on $E$,
namely the family of sets which is inductively defined as follows:

\begin{eqnarray*}
\mathbb{V}_{0}(E) &=&E; \\
\mathbb{V}_{n+1}(E) &=&\mathbb{V}_{n}(E)\cup \mathbf{\wp }\left( \mathbb{V}%
_{n}(E)\right) ; \\
\mathbb{V}(E) &=&\bigcup\limits_{n=0}^{\infty }\mathbb{V}_{n}(E).
\end{eqnarray*}

If an object $x\in \mathbb{V}_{n+1}(E)\setminus \mathbb{V}_{n}(E)$ we say
that its rank is $n+1$, and we write $rank(x)=n+1$. With the usual
identifications of pairs with Kuratowski pairs and functions and relations
with their graphs, we have that $\mathbb{V}(E)$ contains all the usual
mathematical objects that can be constructed from $E$. Moreover, notice that
if $E$ is finite then also each finite level $V_{n}(E)$ of the
superstructure on $E$ is finite.

Now we let $\mathbb{A}$ be a set of atoms whose cardinality $\kappa $ is the
first strongly uncountable inaccessible cardinal number, and we assume that $%
\mathbb{R}\subset \mathbb{A}$. The mathematical universe we will consider in
this paper is 
\begin{equation*}
\Lambda =\left\{ E\in \mathbb{V}(\mathbb{A})\ |\ E\text{ is an atom or a set
such that}\ |E|<\kappa \right\}
\end{equation*}%
where $|E|$ denotes the cardinality of $E.$

We let $\mathfrak{L}$ be the family of finite subsets of $\Lambda $: 
\begin{equation*}
\mathfrak{L}=\mathbf{\wp }_{fin}(\Lambda ).
\end{equation*}%
$\mathfrak{L}$, ordered by the inclusion relation $\subseteq $, is a
directed set; if $E$ is any set, we call {\bfseries net} (with values in $E$%
) any function 
\begin{equation*}
\varphi :\mathfrak{L}\rightarrow E.
\end{equation*}%
From now on, we will denote by $\sqsubseteq $ a partial order relation over $%
\Lambda $ that extends the inclusion, namely such that $\forall \lambda ,\mu
\in \mathfrak{L,}$%
\begin{equation*}
\lambda \subseteq \mu \Rightarrow \lambda \sqsubseteq \mu.
\end{equation*}%
We assume that also $\left( \mathfrak{L},\sqsubseteq \right) $ is a directed
set; for the moment we will not make any other assumption on $E$. One of the
main task of this paper is to define $\sqsubseteq $ in such a way to get a
numerosity theory which satisfies the requests described in the introduction.

Let 
\begin{equation*}
\mathfrak{F}\left( \mathfrak{L},\mathbb{R}\right) =\left\{ \varphi \in 
\mathbb{R}^{\mathfrak{L}}\ |\ \exists A\in \Lambda ,\ \varphi \left( \lambda
\cap A\right) =\varphi \left( \lambda \right) \right\} \footnote{%
The choice of this particular space is due to the fact that we want to end
with the unique hyperreal field whose cardinality is the first inaccessible,
see \cite{keisler76}.}
\end{equation*}%
be endowed with the natural operations 
\begin{eqnarray*}
\left( \varphi +\psi \right) (\lambda ) &=&\varphi (\lambda )+\psi (\lambda
); \\
\left( \varphi \cdot \psi \right) (\lambda ) &=&\varphi (\lambda )\cdot \psi
(\lambda )
\end{eqnarray*}%
and the partial ordering%
\begin{equation*}
\varphi \geq \psi \Leftrightarrow \forall \lambda \in \mathfrak{L,\ }\varphi
(\lambda )\geq \psi (\lambda ).
\end{equation*}

The field of Euclidean numbers is defined as follows\footnote{%
This construction can be seen as an extension of $\alpha $-theory, see e.g. 
\cite{BDN2003alpha}.}:

\begin{definition}
\label{DR} The field of Euclidean numbers $\mathbb{E\supset R}$ is a field
so that there exists a surjective map 
\begin{equation*}
J:\mathfrak{F}\left( \mathfrak{L},\mathbb{R}\right) \rightarrow \mathbb{E}
\end{equation*}%
with the following properties:

\begin{enumerate}
\item[(i)] {\bfseries Ring homomorphism:} $J$ is a ring homomorphism,
namely for all $\varphi ,\psi \in \mathfrak{F}\left( \mathfrak{L},\mathbb{R}%
\right) $

\begin{itemize}
\item $J\left( \varphi +\psi \right) =J\left( \varphi \right) +J\left( \psi
\right) ;$

\item $J\left( \varphi \cdot \psi \right) =J\left( \varphi \right) \cdot
J\left( \psi \right) .$
\end{itemize}

\item[(ii)] \label{mon}{\bfseries Monotonicity:} for all $\varphi \in 
\mathfrak{F}\left( \mathfrak{L},\mathbb{R}\right) $, for all $r\in \mathbb{R}
$, if eventually $\varphi (\lambda )\geq r$ (namely there exists $\lambda
_{0}\in \mathfrak{L}$ such that $\forall \lambda \sqsupseteq \lambda _{0},\
\varphi (\lambda )\geq r$), then 
\begin{equation*}
J\left( \varphi \right) \geq r.
\end{equation*}
\end{enumerate}
\end{definition}

Let us show that such a field exists\footnote{%
Readers with a basic knowledge of nonstandard analysis will recognize
immediately that our construction is a minor modification of the usual limit
ultrapower construction.}.

\begin{proof} Let $\mathcal{U}$ be a fine ultrafilter on $\mathfrak{L}$,
namely a filter of sets such that

\begin{itemize}
\item {\bfseries Maximality:} $Q\in \mathcal{U}\Leftrightarrow \mathfrak{L}%
\backslash Q\notin \mathcal{U}$;

\item {\bfseries Finess:} $\forall \lambda \in \mathfrak{L},\ Q\left[
\lambda \right] \in \mathcal{U}$, where 
\begin{equation}
Q\left[ \lambda \right] :=\left\{ \mu \in \mathfrak{L}\ |\ \mu \sqsupseteq
\lambda \right\} .  \label{finess}
\end{equation}
\end{itemize}

The existence of $\mathcal{U}$ is a well known and easy consequence of
Zorn's Lemma. We use $\mathcal{U}$ to introduce an equivalence relation on
nets, by letting for all $\psi ,\varphi \in \mathfrak{F}\left( \mathfrak{B},%
\mathbb{R}\right) $ 
\begin{equation*}
\varphi \approx _{\mathcal{U}}\psi \Longleftrightarrow \exists Q\in \mathcal{%
U}\ \forall \lambda \in Q,\ \varphi \left( \lambda \right) =\psi \left(
\lambda \right) .
\end{equation*}

We set 
\begin{equation*}
\widetilde{\mathbb{E}}:=\mathfrak{F}\left( \mathfrak{L},\mathbb{R}\right)
/\approx _{\mathcal{U}}
\end{equation*}%
and we denote by $\left[ \varphi \right] _{\mathcal{U}}$ the equivalence
classes. Now we take a injective map 
\begin{equation*}
\Phi :\widetilde{\mathbb{E}}\rightarrow \mathbb{A}
\end{equation*}%
such that $\forall r\in \mathbb{R}$, 
\begin{equation*}
\Phi \left( \left[ c_{r}\right] _{\mathcal{U}}\right) =r
\end{equation*}%
were $c_{r}$ is the net identically equal to $r.$ Finally we set%
\begin{equation*}
\mathbb{E}=\Phi \left( \widetilde{\mathbb{E}}\right) .
\end{equation*}%
The operations on $\mathbb{E}$ can be easily defined by letting 
\begin{equation*}
\Phi \left( \left[ \varphi \right] _{\mathcal{U}}\right) +\Phi \left( \left[
\psi \right] _{\mathcal{U}}\right) =\Phi \left( \left[ \varphi +\psi \right]
_{\mathcal{U}}\right) ;\ \ \ \Phi \left( \left[ \varphi \right] _{\mathcal{U}%
}\right) \cdot \Phi \left( \left[ \psi \right] _{\mathcal{U}}\right) =\Phi
\left( \left[ \varphi \cdot \psi \right] _{\mathcal{U}}\right) .
\end{equation*}

It is very well known (see e.g. \cite{keisler76}) and simple to show that,
thanks to $\mathcal{U}$ being an ultrafilter, $\mathbb{E}$ endowed with the
above operations is a field; moreover, it can be made an ordered field by
endowing it with the following ordering: 
\begin{equation*}
\forall \varphi ,\psi \in \mathfrak{F}\left( \mathfrak{L},\mathbb{R}\right)
,\ \Phi \left( \left[ \varphi \right] _{\mathcal{U}}\right) \geq \Phi \left( %
\left[ \psi \right] _{\mathcal{U}}\right) :\Longleftrightarrow \exists Q\in 
\mathcal{U},\ \forall \lambda \in Q\ \ \varphi \left( \lambda \right) \geq
\psi \left( \lambda \right) .\qedhere
\end{equation*}
\end{proof}

\begin{remark}
$\mathbb{E}$ is an hyperreal field whose cardinality is $\kappa $; such a
field is unique up to isomorphisms (see \cite{keisler76}); namely, changing "%
$\sqsubseteq $" we get an isomorphic hyperreal field. However, we will
choose "$\sqsubseteq $" in such a way to get interesting interactions with
other mathematical structures.
\end{remark}

The number $J\left( \varphi \right) $ is called the $\Lambda$-limit of the
net $\varphi $ and will be denoted by%
\begin{equation*}
J\left( \varphi \right) =\lim_{\lambda \uparrow \Lambda }\varphi (\lambda ).
\end{equation*}%
The reason of this name and notation is that the operation%
\begin{equation*}
\varphi \mapsto \lim_{\lambda \uparrow \Lambda }\varphi (\lambda )
\end{equation*}%
satisfies many of the properties of the usual Cauchy limit, but with the
stronger property of existing for every net. More exactly, it satisfies the
following properties:

\begin{itemize}
\item \textbf{Existence:} Every net $\varphi :\mathfrak{L}\rightarrow 
\mathbb{R}$ has a unique limit $L\in \mathbb{E}$.

\item \textbf{Monotonicity:} For all $r\in \mathbb{R}$ if eventually $%
\varphi (\lambda )\geq r$, then%
\begin{equation*}
\lim_{\lambda \uparrow \Lambda }\varphi (\lambda )\geq r;
\end{equation*}

\item \textbf{Sum and product:} For all $\varphi ,\psi :\mathfrak{L}%
\rightarrow \mathbb{R}$ 
\begin{eqnarray*}
\lim_{\lambda \uparrow \Lambda }\varphi (\lambda )+\lim_{\lambda \uparrow
\Lambda }\psi (\lambda ) &=&\lim_{\lambda \uparrow \Lambda }\left( \varphi
(\lambda )+\psi (\lambda )\right) , \\
\lim_{\lambda \uparrow \Lambda }\varphi (\lambda )\cdot \lim_{\lambda
\uparrow \Lambda }\psi (\lambda ) &=&\lim_{\lambda \uparrow \Lambda }\left(
\varphi (\lambda )\cdot \psi (\lambda )\right) .
\end{eqnarray*}
\end{itemize}

Notice that, if $\lim_{\lambda\rightarrow\Lambda}\varphi(\lambda)$ denotes
the usual Cauchy limit of $\varphi$, the relationship between the Cauchy
limit and the $\Lambda$-limit is

\begin{equation*}
\lim_{\lambda\rightarrow\Lambda}\varphi(\lambda)=st\left(\lim_{\lambda%
\uparrow\Lambda}\varphi(\lambda)\right).
\end{equation*}

\begin{remark}
\label{figa}The notion of Eucliean field defined by Definition \ref{DR} has
been used in several papers with "$\subseteq $" instead of "$\sqsubseteq $"
(e.g. \cite{ultra}, \cite{milano}, \cite{bls}). Now, we will explain the
main technical reason for using an Euclidean field rather than a "generic"
hyperreal field. A set $F\subset \Lambda ^{\ast }$ is called hyperfinite if 
\begin{equation*}
F=\lim_{\lambda \uparrow \Lambda }F_{\lambda }=\left\{ \lim_{\lambda
\uparrow \Lambda }\ \varphi (\lambda )\ |\ \varphi (\lambda )\in F_{\lambda
}\right\}
\end{equation*}%
where the sets $F_{\lambda }\in \Lambda $ are finite. Hyperfinite sets play
a crucial role in many applications of nonstandard analysis. If we use a
Euclidean field, we can associate to every set $E\in \Lambda $ a \textbf{%
unique} hyperfinite set $E^{\circledast }$ defined as follows 
\begin{equation*}
E^{\circledast }=\lim_{\lambda \uparrow \Lambda }E\cap \lambda =\left\{
\lim_{\lambda \uparrow \Lambda }\ \varphi (\lambda )\ |\ \varphi (\lambda
)\in E\cap \lambda \right\}.
\end{equation*}%
The set $E^{\circledast }$ satisfies the property\footnote{%
Here, as usual, we have set%
\begin{equation*}
E^{\sigma }=\left\{ x^{\ast }\ |\ x\in E\right\} .
\end{equation*}%
} 
\begin{equation*}
E^{\sigma }\subset E^{\circledast }\subset E^{\ast }
\end{equation*}%
which is very useful in the applications. Moreover, using an Euclidean field
we can easily define the numerosity function over any set $E\in \Lambda $ by
setting (see Section \ref{NN})%
\begin{equation}
\mathfrak{num}\left( E\right) :=\lim_{\lambda \uparrow \Lambda }\ \left\vert
E\cap \lambda \right\vert .  \label{NT+}
\end{equation}
In this paper, replacing with "$\subseteq $" with a suitable "$\sqsubseteq $%
", the numerosity theory given by Equation (\ref{NT+}) is consistent with
the main features of the numerosity theories present in the litterature
(e.g. \cite{benci95b}, \cite{BDN2018}, \cite{BDN2003}, \cite{BDNF1}, \cite%
{BDNB1}, \cite{BDNB}, \cite{BF}, \cite{QSU}, \cite{DNFtup}, \cite{FM}, \cite%
{mancu}).
\end{remark}

\subsection{Labelled sets}

The notion of labelled set has been introduced in \cite{benci95b} and \cite%
{BDN2003} to construct a numerosity theory for countable sets. Here we
extend this notion to adapt it to the study of numerosity theories for
larger sets.

\begin{definition}
\label{LB}We call \textbf{label set} a family of sets $\mathfrak{B}\subset 
\mathfrak{L}$ such that

\begin{itemize}
\item[(i)] \label{bbbc} $\forall \mathfrak{s,t}\in \mathfrak{B,}\ \mathfrak{s%
}\cap \mathfrak{t},\ \mathfrak{s}\cup \mathfrak{t}\in \mathfrak{B} $;

\item[(ii)] \label{bbbb} $\forall \mathfrak{s}\in \mathfrak{B},\ \mathfrak{s}%
\cap \mathfrak{L}=\varnothing ;$

\item[(iii)] \label{bbbd} $\bigcup_{\mathfrak{s}\in \mathfrak{B}}\mathbb{V}%
\left( \mathfrak{s}\right) =\Lambda $.
\end{itemize}
\end{definition}

Requirement (i) gives to $\mathfrak{B}$ a lattice structure, whilst
requirement (ii) entails that the elements of a label are either atoms or
infinite sets.

Having fixed the notion of "labels", we can now introduce the notion of
"labelling":

\begin{definition}
\label{labling}Let $\mathfrak{B}$ be a set of labels. We call {$\mathfrak{B}$%
-}\textbf{labelling} the map 
\begin{equation*}
\ell :\Lambda \rightarrow \mathfrak{B}
\end{equation*}
defined as follows:%
\begin{equation*}
\ell (a)=\bigcap_{\mu \in I_{a}}\mu ,
\end{equation*}%
where $I_{a}=\{\mu \in \mathfrak{B}\mid a\in \mathbb{V}(\mu )\}$. For every $%
a\in \mathfrak{L}$ we call $\ell (a)$ the label of $a$.
\end{definition}

Roughly speaking, the label of an object $a\in \Lambda $ is a finite set
whose elements allow to define $a$ by the fundamental finitistic set
operations.

There is plenty of sets of labels: just set 
\begin{equation}
\mathfrak{B}_{\max }=\left\{ \mathfrak{t}\in \mathfrak{L}\mid \ \mathfrak{t}%
\cap \mathfrak{L}=\emptyset \right\} .  \label{bmax}
\end{equation}%
Obviously, $\mathfrak{B}_{\max }$ is a label set; more importantly, every
label set $\mathfrak{B}$ is a subset of $\mathfrak{B}_{\max }$.

\begin{example}
Let $a=\mathbb{N}$. Then, using the $\mathfrak{B}_{\max }$-labelling, 
\begin{equation*}
\ell \left( \mathbb{N}\right) =\{\mathbb{N}\}.
\end{equation*}
\end{example}

Now we will describe some properties of a set of labels {$\mathfrak{B}$} and
the corresponding $\mathfrak{B}$-labelling:

\begin{proposition}
\label{eccola} Let $\mathfrak{B}$ be a set of labels, and let $\mathfrak{s}%
\in \mathfrak{B}$. Then

\begin{enumerate}
\item[$(i)$] $\mathbb{V}(\mathfrak{s})$ is countable;

\item[$(ii)$] $\mathbb{V}(\mathfrak{s})\setminus \mathfrak{s}$ consists only
of finite sets;

\item[$(iii)$] \label{cor 1}for all $\mathfrak{s},\mathfrak{t}\in \mathfrak{B%
},$ for all $m\in \mathbb{N}$ we have that 
\begin{equation*}
\mathbb{V}_{m}(\mathfrak{s})\subseteq \mathbb{V}_{m}(\mathfrak{t}%
)\Leftrightarrow \mathfrak{s}\subseteq \mathfrak{t};
\end{equation*}

\item[$(iv)$] \label{c5} for all $\mathfrak{s},\mathfrak{t}\in \mathfrak{B},$
$\mathbb{V}(\mathfrak{s}\cap \mathfrak{t})=\mathbb{V}(\mathfrak{s})\cap 
\mathbb{V}(\mathfrak{t})$;

\item[$(v)$] \label{c6}If $a\in \mathfrak{L}$ and $\mathfrak{t}\in \mathfrak{%
B}\mathfrak{,\ }$then $\left\{ a\right\} \in \mathbb{V}(\mathfrak{t}%
)\Leftrightarrow a\in \mathbb{V}(\mathfrak{t})$.
\end{enumerate}
\end{proposition}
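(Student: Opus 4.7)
The plan is to dispatch the finiteness statements first, since they feed into the other parts. For (i) and (ii) I would show by induction on $n$ that each $\mathbb{V}_n(\mathfrak{s})$ is finite: the base case $\mathbb{V}_0(\mathfrak{s})=\mathfrak{s}$ is finite because $\mathfrak{s}\in\mathfrak{L}$, and the superstructure construction treats the elements of $\mathfrak{s}$ as atoms, so $\mathbb{V}_{n+1}(\mathfrak{s})=\mathbb{V}_n(\mathfrak{s})\cup\wp(\mathbb{V}_n(\mathfrak{s}))$ is a union of two finite sets. Then (i) follows because $\mathbb{V}(\mathfrak{s})$ is a countable union of finite sets, and (ii) follows because any $x\in \mathbb{V}(\mathfrak{s})\setminus\mathfrak{s}$ first appears at some level $n\geq 1$, which forces $x\subseteq \mathbb{V}_{n-1}(\mathfrak{s})$, hence $x$ is finite.

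Parts (iii) and (v) are then consequences of (ii) combined with condition~(ii) of Definition~\ref{LB}, which says every element of a label is either an atom or an infinite set. For the nontrivial direction of (iii), I would take $x\in\mathfrak{s}$; this $x$ is not a finite set by Definition~\ref{LB}(ii), but $\mathbb{V}_m(\mathfrak{s})\subseteq \mathbb{V}_m(\mathfrak{t})\subseteq \mathbb{V}(\mathfrak{t})$, and part~(ii) applied to $\mathfrak{t}$ places every non-finite element of $\mathbb{V}(\mathfrak{t})$ inside $\mathfrak{t}$, forcing $x\in\mathfrak{t}$; the reverse implication is a routine induction on $m$. For (v), one direction is immediate because $a\in\mathbb{V}_n(\mathfrak{t})$ gives $\{a\}\in\wp(\mathbb{V}_n(\mathfrak{t}))\subseteq \mathbb{V}_{n+1}(\mathfrak{t})$; conversely, $\{a\}$ is a singleton of an element of $\Lambda$, hence $\{a\}\in\mathfrak{L}$, so Definition~\ref{LB}(ii) gives $\{a\}\notin\mathfrak{t}$, whence the smallest level at which $\{a\}$ appears in $\mathbb{V}(\mathfrak{t})$ is some $n+1$ with $\{a\}\subseteq \mathbb{V}_n(\mathfrak{t})$, yielding $a\in\mathbb{V}(\mathfrak{t})$.

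The main obstacle is (iv), and specifically the inclusion $\mathbb{V}(\mathfrak{s})\cap\mathbb{V}(\mathfrak{t})\subseteq \mathbb{V}(\mathfrak{s}\cap\mathfrak{t})$; the reverse inclusion follows immediately by applying (iii) twice. My plan is to prove the stronger statement $\mathbb{V}_n(\mathfrak{s})\cap\mathbb{V}(\mathfrak{t})\subseteq \mathbb{V}_n(\mathfrak{s}\cap\mathfrak{t})$ by induction on $n$. At $n=0$, any $x\in\mathfrak{s}\cap\mathbb{V}(\mathfrak{t})$ is a non-finite element of $\mathbb{V}(\mathfrak{t})$ by Definition~\ref{LB}(ii), so by part~(ii) it must lie in $\mathfrak{t}$, hence in $\mathfrak{s}\cap\mathfrak{t}$. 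For the inductive step, take $x\in \mathbb{V}_{n+1}(\mathfrak{s})\cap\mathbb{V}(\mathfrak{t})$ not already handled at level $n$: then $x$ is a finite subset of $\mathbb{V}_n(\mathfrak{s})$, hence a finite element of $\mathfrak{L}$, so Definition~\ref{LB}(ii) rules out $x\in\mathfrak{t}$, and consequently $x$ first appears in $\mathbb{V}(\mathfrak{t})$ at some level $k\geq 1$ with $x\subseteq \mathbb{V}_{k-1}(\mathfrak{t})$. Every $y\in x$ then lies in $\mathbb{V}_n(\mathfrak{s})\cap\mathbb{V}(\mathfrak{t})$, so the inductive hypothesis gives $y\in \mathbb{V}_n(\mathfrak{s}\cap\mathfrak{t})$, and therefore $x\in \wp(\mathbb{V}_n(\mathfrak{s}\cap\mathfrak{t}))\subseteq \mathbb{V}_{n+1}(\mathfrak{s}\cap\mathfrak{t})$. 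The delicate point throughout is keeping straight the dichotomy between atoms or infinite sets (which live only at level zero of a label's superstructure) and finite sets (which live only strictly above level zero), since this is exactly what lets the two superstructures be compared coherently.
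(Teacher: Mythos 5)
Your proposal is correct, and it runs on the same engine as the paper's proof: induction along the superstructure levels, finiteness of every $\mathbb{V}_n(\mathfrak{s})$, and the dichotomy coming from Definition \ref{LB}(ii), by which the elements of a label are atoms or infinite sets while everything entering $\mathbb{V}(\mathfrak{s})$ above level zero is a finite set. Parts (i), (ii) and (v) coincide with the paper's arguments essentially verbatim (your (v) is exactly the paper's minimal-level argument). Two local differences are worth recording. In (iii) you argue elementwise: each $x\in\mathfrak{s}$ is a non-finite-set member of $\mathbb{V}(\mathfrak{t})$, hence lies in $\mathfrak{t}$ by (ii); the paper instead first extracts $\mathfrak{s}\in\mathbb{V}_m(\mathfrak{t})$ and applies the same dichotomy to its elements --- interchangeable routes. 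In (iv) your inductive statement is genuinely different and slightly stronger: you prove the level-preserving inclusion $\mathbb{V}_n(\mathfrak{s})\cap\mathbb{V}(\mathfrak{t})\subseteq\mathbb{V}_n(\mathfrak{s}\cap\mathfrak{t})$ by a one-sided induction on the $\mathfrak{s}$-level, disposing of an element $x$ new at level $n+1$ by noting $x\in\mathfrak{L}$, hence $x\notin\mathfrak{t}$ by Definition \ref{LB}(ii), hence $x\subseteq\mathbb{V}_{k-1}(\mathfrak{t})$ at its first appearance in $\mathbb{V}(\mathfrak{t})$. The paper inducts symmetrically on the common level $l$ of $\mathbb{V}_l(\mathfrak{s})\cap\mathbb{V}_l(\mathfrak{t})$, and its write-up is loosest exactly where you are careful: the mixed case in which $\eta$ enters the two superstructures at different levels is absorbed into the imprecisely phrased step ``there are $A\in\mathbb{V}_l(\mathfrak{s})$, $B\in\mathbb{V}_l(\mathfrak{t})$ such that $\eta\in\wp(A)\cap\wp(B)$'' (what is meant is $\eta\subseteq\mathbb{V}_l(\mathfrak{s})\cap\mathbb{V}_l(\mathfrak{t})$), and its base case is garbled ($\eta\in\mathbb{V}_0(\mathfrak{s})\cap\mathbb{V}_0(\mathfrak{t})$ means $\eta\in\mathfrak{s}\cap\mathfrak{t}$, not $\eta=\mathfrak{s}=\mathfrak{t}$). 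Your asymmetric formulation handles that case explicitly, at the modest cost that your base case $n=0$ is no longer trivial and itself needs the dichotomy plus part (ii). Finally, your derivation of the easy inclusion $\mathbb{V}(\mathfrak{s}\cap\mathfrak{t})\subseteq\mathbb{V}(\mathfrak{s})\cap\mathbb{V}(\mathfrak{t})$ from the monotonicity in (iii) is legitimate, since $\mathfrak{s}\cap\mathfrak{t}\in\mathfrak{B}$ by Definition \ref{LB}(i), and replaces the paper's ``trivial'' with a one-line justification.
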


\begin{proof} (i) $\mathfrak{s}$ is finite, hence by induction it
trivially holds that $\mathbb{V}_{n}\left( \mathfrak{s}\right) $ is finite
for every $n\in \mathbb{N}$. Therefore $\mathbb{V}\left( \mathfrak{s}\right) 
$ is countable.

(ii) As $\mathfrak{s}$ is finite, by induction it is immediate to prove that $%
\mathbb{V}_{n+1}(\mathfrak{s})\setminus \mathfrak{s}$ consists only of
finite sets, hence the thesis follows straightforwardly.

(iii) The implication $\Leftarrow $ is trivial. Let us prove the
other implication. If $\mathbb{V}_{m}(\mathfrak{s})\subseteq \mathbb{V}_{m}(%
\mathfrak{t})$ then, in particular, $\mathfrak{s}\in \mathbb{V}_{m}(%
\mathfrak{t})$. Now, if $\mathfrak{s}=\{a_{1},\dots ,a_{n}\}$, all $%
a_{1},\dots ,a_{n}$ are either atoms or infinite sets, hence by (i), $%
a_{1},\dots ,a_{n}\in \mathfrak{t}$.

(iv) The inclusion $\subseteq $ is trivial. For the reverse inclusion,
let $\eta \in \mathbb{V}(\mathfrak{s})\cap \mathbb{V}(\mathfrak{t})$. In
particular, $\eta \in \mathbb{V}_{n}(\mathfrak{s})\cap \mathbb{V}_{m}(%
\mathfrak{t})$ and so, if $l=\max \{n,m\}$, we have that $\eta \in \mathbb{V}%
_{l}(\mathfrak{s})\cap \mathbb{V}_{l}(\mathfrak{t})$. We proceed by
induction on $l$ to show that $\mathbb{V}_{l}(\mathfrak{s})\cap \mathbb{V}%
_{l}(\mathfrak{t})\subseteq \mathbb{V}(\mathfrak{s}\cap \mathfrak{t})$.

If $l=0$, then $\eta \in \mathbb{V}_{0}(\mathfrak{s})\cap \mathbb{V}_{0}(%
\mathfrak{t})$ if and only if $\eta =\mathfrak{s}=\mathfrak{t}$, and the
desired inclusion trivially holds.

Now let us suppose the inclusion to hold for $l\in \mathbb{N}$, and let $%
\eta \in \mathbb{V}_{l+1}(\mathfrak{s})\cap \mathbb{V}_{l+1}(\mathfrak{t})$.
If $\eta \in \mathbb{V}_{l}(\mathfrak{s})\cap \mathbb{V}_{l}(\mathfrak{t})$
we are done by inductive hypothesis; if not, there are $A\in \mathbb{V}_{l}(%
\mathfrak{s}),B\in \mathbb{V}_{l}(\mathfrak{t})$ such that $\eta \in \wp
(A)\cap \wp (B)$. In particular, $\eta \in \wp (A\cap B)$. But $A\cap B\in 
\mathbb{V}_{l}(\mathfrak{s})\cap \mathbb{V}_{l}(\mathfrak{t})$, so by
induction $A\cap B\in \mathbb{V}(\mathfrak{s}\cap \mathfrak{t})$, hence $%
\eta \in \mathbb{V}(\mathfrak{s}\cap \mathfrak{t})$ as desired.

(v) The implication $\Leftarrow $ is trivial. For the reverse
implication, let 
\begin{equation*}
l=\min \{n\in \mathbb{N}\mid a\in \mathbb{V}_{n}(\mathfrak{t})\}.
\end{equation*}%
In particular, we have that $l\geq 1$. In fact, if $l=0$ then $\{a\}=%
\mathfrak{t}$, and this cannot happen as $\mathfrak{t}\cap \mathfrak{L}%
=\emptyset $. Hence $a\in \mathbb{V}_{l-1}(\mathfrak{t})$ and we are done.\end{proof}

As we will see in Section \ref{NUOVA}, the freedom of choosing a particular
set of labels allows to impose certain additional arithmetical properties on
numerosities.

\begin{proposition}
\label{pro} Let $\mathfrak{B}$ be a set of labels and let $\ell $ be a $%
\mathfrak{B}$-labelling. The following properties hold:

\begin{enumerate}
\item[$(i)$] \label{v1}$\forall a,b\in \Lambda $, $a\subseteq b\Rightarrow
\ell (a)\subseteq \ell (b);$

\item[$(ii)$] \label{v2}$\forall a\in \Lambda ,$ $\ell (a)\supseteq\ell
(\{a\}),$ and equality holds if $a\in\mathfrak{L}$;

\item[$(iii)$] \label{v3}$\forall a,b\in \Lambda ,$ $a\in b\Rightarrow \ell
(\{a\})\subseteq \ell (b);$

\item[$(iv)$] \label{v5}$\forall \lambda \in \mathfrak{B},$ $\lambda \in 
\mathbb{V}\left( \ell (\lambda )\right) ;$

\item[$(v)$] \label{labelling 1}$\forall \mathfrak{s}\in \mathfrak{B},\ \ell
\left( \mathfrak{s}\right) =\mathfrak{s};$

\item[$(vi)$] \label{v4}$\forall \mathfrak{s}\in \mathfrak{B},\forall m\in 
\mathbb{N},$ $\ell \left( \mathbb{V}_{m}(\mathfrak{s})\right) =\mathfrak{s};$

\item[$(vii)$] \label{labelling 2}$\forall a\in \Lambda ,\forall \mathfrak{s}%
\in \mathfrak{B},$ $\ell \left( a\right) \subseteq \mathfrak{s}%
\Leftrightarrow a\in \mathbb{V}(\mathfrak{s})$;

\item[$(viii)$] \label{labelling 3}$\forall a,b\in \Lambda ,$ $\ell \left(
\{a,b\}\right) =\ell \left( a\right) \cup \ell (b)$;

\item[$(ix)$] \label{l1}$\forall a,b\in \Lambda ,$ $\ell \left( \left(
a,b\right) \right) =\ell (a)\cup \ell (b)$.

\item[$(x)$] \label{l3}$\forall E\in \Lambda $, $\forall \lambda \in 
\mathfrak{L}$ if we set 
\begin{equation*}
E_{\lambda }:=\left\{ x\in E\ |\ \ell \left( x\right) \subseteq \ell \left(
\lambda \right) \right\}
\end{equation*}%
then%
\begin{equation*}
E_{\lambda }=E\cap \mathbb{V}(\ell \left( \lambda \right) );
\end{equation*}

\item[$(xi)$] \label{l4}$\forall E\in \Lambda ,$ the set $E_{\lambda }$ is
finite.
\end{enumerate}
\end{proposition}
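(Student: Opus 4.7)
The plan is to prove each of the eleven items by reducing the inclusion $\ell(a)\subseteq\ell(b)$ to the set-theoretic containment $I_b\subseteq I_a$, where $I_c=\{\mu\in\mathfrak{B}\mid c\in\mathbb{V}(\mu)\}$, and then checking this containment by a rank inspection inside $\mathbb{V}(\mu)$. I would first handle (v), $\ell(\mathfrak{s})=\mathfrak{s}$ for $\mathfrak{s}\in\mathfrak{B}$: one inclusion comes from $\mathfrak{s}\in\wp(\mathfrak{s})\subseteq\mathbb{V}_1(\mathfrak{s})$, so $\mathfrak{s}\in I_\mathfrak{s}$ and hence $\ell(\mathfrak{s})\subseteq\mathfrak{s}$; for the reverse, I would rerun the argument of Proposition \ref{eccola}(iii)---every $\mathfrak{t}\in I_\mathfrak{s}$ satisfies $\mathfrak{s}\subseteq\mathfrak{t}$ because the elements of $\mathfrak{s}$ are atoms or infinite sets which, by Proposition \ref{eccola}(ii), must lie in $\mathfrak{t}$. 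Statement (iv) then follows because $\lambda\in\mathbb{V}_1(\lambda)=\mathbb{V}_1(\ell(\lambda))$.

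Next I would dispatch the inclusion items (i)--(iii), (vi), (vii) together, as they all boil down to rank computations. For (i), if $a\subseteq b$ and $b\in\mathbb{V}_n(\mu)$ with $n\geq 1$, then $b\subseteq\mathbb{V}_{n-1}(\mu)$, so $a\subseteq\mathbb{V}_{n-1}(\mu)$ and $a\in\mathbb{V}_n(\mu)$, yielding $I_b\subseteq I_a$. For (ii), $a\in\mathbb{V}_n(\mu)$ gives $\{a\}\in\mathbb{V}_{n+1}(\mu)$ and hence $I_a\subseteq I_{\{a\}}$; the equality when $a\in\mathfrak{L}$ uses Proposition \ref{eccola}(v) for the reverse inclusion. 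Part (iii) is then just (i) applied to the subset $\{a\}\subseteq b$ furnished by $a\in b$. For (vi), combine (i) with (v): $\mathfrak{s}\subseteq\mathbb{V}_m(\mathfrak{s})$ yields $\ell(\mathfrak{s})\subseteq\ell(\mathbb{V}_m(\mathfrak{s}))$, while $\mathbb{V}_m(\mathfrak{s})\in\mathbb{V}(\mathfrak{s})$ supplies the reverse via (vii). Part (vii) itself is immediate: the forward direction comes from $a\in\mathbb{V}(\ell(a))\subseteq\mathbb{V}(\mathfrak{s})$, and the reverse is the very definition of $\mathfrak{s}\in I_a$.

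The pairing items (viii), (ix) I would handle by observing $I_{\{a,b\}}=I_a\cap I_b$ (since $\{a,b\}\in\mathbb{V}_{n+1}(\mu)$ iff both $a,b\in\mathbb{V}_n(\mu)$) and then using the lattice structure on $\mathfrak{B}$ together with Proposition \ref{eccola}(iv) to identify $\bigcap_{\mu\in I_a\cap I_b}\mu$ with $\ell(a)\cup\ell(b)$. Part (ix) then follows by two applications of (viii) to the Kuratowski pair $(a,b)=\{\{a\},\{a,b\}\}$, collapsing singleton-labels via (ii). Finally, (x) is a pointwise restatement of (vii), and (xi) follows from (x) by noting that $E\in\Lambda$ forces the elements of $E$ to have rank bounded by some $r$, so $E_\lambda\subseteq\mathbb{V}_r(\ell(\lambda))$, which is finite because $\ell(\lambda)$ itself is finite.

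The main technical obstacle I foresee is justifying $a\in\mathbb{V}(\ell(a))$---i.e.\ that $\ell(a)$ itself belongs to $I_a$---even though it is defined as an intersection over the possibly infinite family $I_a$. The cleanest resolution is to extend Proposition \ref{eccola}(iv) inductively to establish $\bigcap_{\mu\in I_a}\mathbb{V}(\mu)\subseteq\mathbb{V}\bigl(\bigcap_{\mu\in I_a}\mu\bigr)$; alternatively, since $\mathfrak{B}$ is closed under finite intersections and every $\mu\in\mathfrak{B}$ is finite, one can argue that $\ell(a)$ is already attained by a finite subfamily of $I_a$, which then certifies $a\in\mathbb{V}(\ell(a))$ directly and legitimizes every use of ``$\ell(a)\in I_a$'' above.
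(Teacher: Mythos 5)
Your proposal is correct in substance and follows the same overall strategy as the paper's proof: every item is reduced to the index families $I_a$ and to the structural facts of Proposition \ref{eccola}. Three local differences are worth recording. First, you prove (v) before (iv), whereas the paper proves (iv) directly by asserting $\bigcap_{\mu \in I_a}\mathbb{V}(\mu)=\mathbb{V}\bigl(\bigcap_{\mu \in I_a}\mu\bigr)$, an identity over a possibly infinite family which the binary Proposition \ref{eccola}(iv) does not yield; your closing paragraph supplies exactly the missing justification, and it is the second of your two suggested repairs that works (the ``inductive extension'' does not reach infinite families, but finite attainment does: each $\mu\in\mathfrak{B}$ is finite and $\mathfrak{B}$ is closed under binary, hence finite, intersections, so $\ell(a)$ equals a finite subintersection, belongs to $\mathfrak{B}$, and lies in $I_a$, giving $a\in\mathbb{V}(\ell(a))$ for \emph{every} $a\in\Lambda$ --- note the paper states (iv) only for $\lambda\in\mathfrak{B}$ but uses the general form in (vii) and (x)). On this point you are more careful than the paper. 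Second, for (vi) you sandwich $\ell(\mathbb{V}_m(\mathfrak{s}))$ between two copies of $\mathfrak{s}$ via (i), (v) and the reverse direction of (vii) (which is definitional, so there is no circularity), instead of the paper's direct computation $I_{\mathbb{V}_m(\mathfrak{s})}=I_{\mathfrak{s}}$. Third, for (viii) your route --- $I_{\{a,b\}}=I_a\cap I_b$ together with the observation that $\ell(a)\cup\ell(b)$ is the \emph{minimum} of $I_a\cap I_b$ (closure of $\mathfrak{B}$ under unions plus $\ell(a)\in I_a$, $\ell(b)\in I_b$) --- is cleaner than the paper's set-algebra display, which tacitly needs the same minimality fact; just note that what you need here is lattice closure under unions, not Proposition \ref{eccola}(iv). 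Your (ix), with the Kuratowski pair $\{\{a\},\{a,b\}\}$ and the collapse $\ell(\{a\})\subseteq\ell(a)$, correctly needs only the inclusion from (ii), not the equality.

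One caution on (i) and (iii). Your inference ``$b\in\mathbb{V}_n(\mu)$ with $n\geq 1$ implies $b\subseteq\mathbb{V}_{n-1}(\mu)$'' fails when $b\in\mu$: an infinite set listed as an element of a label is an urelement of the superstructure $\mathbb{V}(\mu)$, whose subsets and elements need not belong to $\mathbb{V}(\mu)$ at all (indeed $\mathbb{V}(\mu)$ is countable by Proposition \ref{eccola}(i), so it cannot contain all subsets of an infinite member). The step is valid exactly when $b\notin\mu$ --- in particular always when $b$ is a finite set, since $\mu\cap\mathfrak{L}=\varnothing$ --- and this is precisely the case the paper's own proof passes over with ``trivially $a\in\mathbb{V}(\mathfrak{t})$ whenever $b\in\mathbb{V}(\mathfrak{t})$.'' The gap is real in both texts: for $\mathfrak{B}=\mathfrak{B}_{\max}$ one computes $\ell(\mathbb{N})=\{\mathbb{N}\}$ (the paper's own example), while $\ell(2\mathbb{N})=\{2\mathbb{N}\}$ and $\ell(\{5\})=\{5\}$, so the stated inclusions in (i) and (iii) fail for $b=\mathbb{N}$. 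So you have not introduced a new error, you have faithfully reproduced the paper's; but to make (i) airtight you should either restrict to the case where $b$ enters $\mathbb{V}(\mu)$ at a powerset stage (equivalently $b\notin\mu$, e.g.\ $b\in\mathfrak{L}$), or observe that the unrestricted statement requires hypotheses on $\mathfrak{B}$ that neither you nor the paper makes explicit.
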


\begin{proof} (i) If $a\subseteq b,$ then trivially $a\in \mathbb{V}(%
\mathfrak{t})$ whenever $b\in \mathbb{V}(\mathfrak{t})$ and hence 
\begin{equation*}
\ell (a)=\bigcap \left\{ \mathbb{V}(\mathfrak{t})\ |\ \mathfrak{t}\in 
\mathfrak{B},\ a\in \mathbb{V}(\mathfrak{t})\right\} \subseteq \bigcap
\left\{ \mathbb{V}(\mathfrak{t})\ |\ \mathfrak{t}\in \mathfrak{B},\ b\in 
\mathbb{V}(\mathfrak{t})\right\} =\ell (b).
\end{equation*}

(ii) Trivially, for all $\mathfrak{t}\in\mathfrak{B}$, if $a\in\mathbb{V}(\mathfrak{t})$ then also $\{a\}\in\mathbb{V}(\mathfrak{t})$, hence $\ell(\{a\})\subseteq \ell(\{a\}$. The second claim follows from the fact that, by Proposition \ref{eccola}.(v), $a\in \mathbb{V}(\mathfrak{t})\Leftrightarrow \{a\}\in \mathbb{V}(%
\mathfrak{t})$.

(iii) If $a\in b,$ then $\left\{ a\right\} \subseteq b$ and by (i) and (ii), we have that%
\begin{equation*}
\ell (a)=\ell (\left\{ a\right\} )\subseteq \ell (b).
\end{equation*}

(iv) By definition, $\forall \mu \in I_{a},$ $a\in \mathbb{V}(\mu );$
hence 
\begin{equation*}
a\in \bigcap_{\mu \in I_{a}}\mathbb{V}(\mu )=\mathbb{V}\left( \bigcap_{\mu
\in I_{a}}\mu \right) =\mathbb{V}\left( \ell (a)\right) .
\end{equation*}

(v) We have that $\mathfrak{s}=\mathbb{V}_{0}(\mathfrak{s}%
)\in \mathbb{V}(\mathfrak{s});$ hence $\mathfrak{s}\in I_{\mathfrak{s}}$ and
so $\ell (\mathfrak{s})\subseteq \mathfrak{s}.$ Moreover, if $\mathfrak{t}%
\in I_{\mathfrak{s}}$, $\mathfrak{s}\in \mathbb{V}(\mathfrak{t})$ and since $%
\mathfrak{s}$ is a label, $\mathfrak{s}\subset \mathfrak{t}$ and so 
\begin{equation*}
\mathfrak{s}\subseteq \bigcap_{\mathfrak{t}\in I_{\mathfrak{s}}}\mathfrak{t}%
=\ell (\mathfrak{s}).
\end{equation*}

(vi) If $\mathfrak{s}\in \mathfrak{B},$ then $\forall \mathfrak{t}\in 
\mathfrak{B},$ by Proposition \ref{eccola}.(iv) we have that%
\begin{equation*}
\mathbb{V}_{m}(\mathfrak{s})\in \mathbb{V}(\mathfrak{t})\Leftrightarrow 
\mathfrak{s}\subseteq \mathfrak{t}\Leftrightarrow \mathfrak{s}\in \mathbb{V}(%
\mathfrak{t}).
\end{equation*}

Then%
\begin{eqnarray}
I_{\mathbb{V}_{m}(\mathfrak{s})} &=&\{\mathfrak{t}\in \mathfrak{B}\mid 
\mathbb{V}_{m}(\mathfrak{s})\in \mathbb{V}(\mathfrak{t})\}=\{\mathfrak{t}\in 
\mathfrak{B}\mid \mathfrak{s}\subseteq \mathfrak{t}\}  \label{v41} \\
&=&\{\mathfrak{t}\in \mathfrak{B}\mid \mathfrak{s}\in \mathbb{V}(\mathfrak{t}%
)\}=I_{\mathfrak{s}},  \notag
\end{eqnarray}

hence the thesis follows by the definition of $\mathfrak{B}$-labeling.

(vii) ($\Rightarrow $) If $\ell \left( a\right) \subseteq 
\mathfrak{s}$, then, by (i), $\mathbb{V}\left( \ell \left( a\right)
\right) \subseteq \mathbb{V}\left( \mathfrak{s}\right) ;$ therefore, by (iv),$\ a\in \mathbb{V}\left( \mathfrak{s}\right) .$

($\Leftarrow $) If $a\in \mathbb{V}(\mathfrak{s}),$ then $\mathfrak{s}\in I_{a}$ and so $\ell(a)\subseteq\mathfrak{s}$.

(viii) For $\mathfrak{t}\in I_{\left\{ a,b\right\} },$ since $%
\left\{ a,b\right\} \in \mathbb{V}(\mathfrak{t})$, but $\left\{ a,b\right\}
\notin \mathfrak{t}$, we have that $a\in \mathbb{V}(\mathfrak{t})$ and $b\in 
\mathbb{V}(\mathfrak{t});$ then%
\begin{eqnarray*}
I_{\left\{ a,b\right\} } &=&\{\mathfrak{t}\in \mathfrak{B}\mid a\in \mathbb{V%
}(\mathfrak{t})\ \text{and\ }b\in \mathbb{V}(\mathfrak{t})\} \\
&=&\{\mathfrak{t}\in \mathfrak{B}\mid a\in \mathbb{V}(\mathfrak{t})\}\cap \{%
\mathfrak{t}\in \mathfrak{B}\mid b\in \mathbb{V}(\mathfrak{t})\}=I_{a}\cap
I_{b}.
\end{eqnarray*}%
Hence%
\begin{eqnarray*}
\ell (\left\{ a,b\right\} ) &=&\bigcap_{\mathfrak{t}\in I_{\left\{
a,b\right\} }}\mathfrak{t}=\bigcap_{\mathfrak{t}\in I_{a}\cap I_{b}}%
\mathfrak{t}=\{x\in \Lambda \mid x\in \mathfrak{t}\ \ \text{and}\ \ 
\mathfrak{t}\in I_{a}\cap I_{b}\} \\
&=&\{x\in \Lambda \mid (x\in \mathfrak{t}\ \text{and}\ \mathfrak{t}\in
I_{a})\ \text{or\ }(x\in \mathfrak{t}\ \text{and}\ \mathfrak{t}\in I_{b})\}
\\
&=&\{x\in \Lambda \mid (x\in \mathfrak{t}\ \text{and}\ \mathfrak{t}\in
I_{a})\}\cup \{x\in \Lambda \mid (x\in \mathfrak{t}\ \text{and}\ \mathfrak{t}%
\in I_{b})\} \\
&=&\left( \bigcap_{\mathfrak{t}\in I_{a}}\mathfrak{t}\right) \cup \left(
\bigcap_{\mathfrak{t}\in I_{b}}\mathfrak{t}\right) =\ell (a)\cup \ell (b).
\end{eqnarray*}

(ix) We have that%
\begin{equation*}
\ell \left( \left( a,b\right) \right) =\ell \left( \{a,\{a,b\}\}\right)
=\ell \left( a\right) \cup \ell \left( \{a,b\}\right) =\ell \left( b\right)
\ell \left( a\right) \cup \ell \left( a\right) \cup \ell \left( b\right)
=\ell \left( a\right) \cup \ell \left( b\right) .
\end{equation*}

(x) First set $\mathfrak{s=\ell }\left( \lambda \right) .$ Let us
first prove the inclusion $\subseteq $. Let $x\in E$ be such that $\ell
(x)\subseteq \mathfrak{s}$. By the definition of labelling then $x\in 
\mathbb{V}(\mathfrak{s})$, and the inclusion is proven. For the reverse
inclusion, let $x\in E\cap \mathbb{V}(\mathfrak{s})$. In particular, it must
be $\ell (x)\subseteq \mathfrak{s}$, and we are done.

(xi) If $E\in \Lambda $ then $E$ has a finite rank $n$, which means
that $E\cap \mathbb{V}(\mathfrak{s})=E\cap \mathbb{V}_{n}(\mathfrak{s})$,
and the conclusion follows by (x) as, by construction, $\mathbb{V}%
_{n}(\mathfrak{s})$ is finite. \end{proof}

The notion of $\mathfrak{B}$-labelling allows to equip $\mathfrak{L}$ with a
partial order structure $\sqsubseteq $:

\begin{definition}
We set 
\begin{equation*}
\mathfrak{L}_{0}\left(\mathfrak{B}\right):=\{\mathbb{V}_{m}(\mathfrak{t})\
|\ \ m\in \mathbb{N}_{0},\ \mathfrak{t}\in \mathfrak{B}\}
\end{equation*}%
and for every $\lambda ,\mu \in \mathfrak{L}$, we set%
\begin{equation*}
\lambda \sqsubseteq \mu \Leftrightarrow \lambda \subseteq \dbigcap \left\{
\tau \in \mathfrak{L}_{0}\mathfrak{\ }|\ \mu \subseteq \tau \right\}.
\end{equation*}
\end{definition}

Notice that, by definition 
\begin{equation*}
\lambda \sqsubseteq \mu \Leftrightarrow I_{\lambda}\subseteq I_{\mu},
\end{equation*}
where $I_{\lambda}$ has been introduced in Definition \ref{labling}, and
that 
\begin{equation*}
\mathfrak{L}_{0}\left(\mathfrak{L_{0}}\left(\mathfrak{B}\right)\right)=%
\mathfrak{L}_{0}\left(\mathfrak{B}\right).
\end{equation*}

Clearly $\sqsubseteq $ induces a lattice structure on $\mathfrak{L}_{0}\left(%
\mathfrak{B}\right)$, since 
\begin{equation*}
\lambda \vee \mu :=\dbigcap \left\{ \tau \in \mathfrak{L}_{0}\mathfrak{\ }|\
\lambda \cup \mu \subseteq \tau \right\} ;
\end{equation*}%
\begin{equation*}
\lambda \wedge \mu :=\dbigcup \left\{ \tau \in \mathfrak{L}_{0}\mathfrak{\ }%
|\ \tau \subseteq \lambda \cap \mu \right\} .
\end{equation*}%
Since $\lambda \subseteq \mu \Rightarrow \lambda \sqsubseteq \mu ,$ we can
use the directed set $\left( \mathfrak{L}_{0}\left(\mathfrak{B}%
\right),\sqsubseteq \right) $ to define a field of Euclidean numbers as in
Definition \ref{DR}. From now on, $\mathbb{E}$ will denote such a field.

\begin{remark}
Now the idea is to construct a suitable set of labels in such a way that the
relation $\sqsubseteq $ carry all the informations needed for a "good"
numerosity theory. All these informations depend on $\sqsubseteq $ and not
on the choice of the ultrafilter used in the construction of $\mathbb{E}$.
\end{remark}

\section{The general theory of numerosities\label{NN}}

Different versions of the notion of numerosity have already been studied in
several previous papers \cite{benci95b,BDN2003, BDNF1, BDNB1, BDNB, QSU,
DNFtup, FM}; we refer also to the book \cite{BDN2018} for a complete
overview of the countable case. In this paper, we want to show how the new
definition of labels and of the Euclidean field allows to easily provide the
most interesting features of the theory of numerosities. In particular, we
show how numerosities can be used to simultaneously unify and generalize
objects and results coming from different areas, like (a version of)
Lagrange's Theorem for groups, the Peano-Jordan measure and the Lebesgue
measure.

\subsection{Definition and first properties}

\begin{definition}
\label{NT}Let $E$ be a set in $\Lambda $. We call {\textbf{numerosity} of $E$%
} the Euclidean number%
\begin{equation*}
\mathfrak{num}\left( E\right) :=\lim_{\lambda \uparrow \Lambda }\ \left\vert
E\cap \lambda \right\vert .
\end{equation*}%
The set of numerosities will be denoted by $\mathbf{Num}$.
\end{definition}

The notion of numerosity allows to "give a name" to some hyperreal number.
We set 
\begin{equation}
\alpha =\mathfrak{num}\left( \mathbb{N}\right) ;\ \beta =\mathfrak{num}%
\left( \left[ 0,1\right) \right).  \label{alfa}
\end{equation}

The numerosity of a set depends on the choice of the set of labels $%
\mathfrak{B}$, as well as on the ultrafilter $\mathcal{U}$ on $\mathfrak{B}$
chosen to construct $\mathbb{E}$. However the properties which will be
listed below are independent of any choice.

\begin{theorem}
\label{1} Let $E,F$ be sets in $\Lambda $. Numerosities satisfy the
following properties:

\begin{enumerate}
\item[$(i)$] \textbf{Finite sets principle:} if $E$ is a finite set, then $%
\mathfrak{num}\left( E\right) =|E|$;

\item[$(ii)$] \textbf{Euclid's principle:} if $E\subset F$ then $\mathfrak{%
num}\left( E\right) <\mathfrak{num}\left( F\right)$;

\item[$(iii)$] \textbf{Labels principle}: if 
\begin{equation*}
E_{\lambda }=\left\{ x\in E\ |\ \ell \left( x\right) \subseteq \ell \left(
\lambda \right) \right\} .
\end{equation*}%
then, if $\lambda \in \mathfrak{L}_{0}\left(\mathfrak{B}\right)$, $%
E_{\lambda }=E\cap \lambda $ and hence 
\begin{equation*}
\mathfrak{num}\left( E\right) =\lim_{\lambda \uparrow \Lambda }\ \left\vert
E_{\lambda }\right\vert;
\end{equation*}

\item[$(iv)$] \textbf{Comparison principle:} if $\Phi :E\rightarrow F$ is a
bijection that preserves labels, namely such that for all $x\in E$ 
\begin{equation*}
\ell (\Phi \left( x\right) )=\ell \left( x\right) ,
\end{equation*}%
then $\mathfrak{num}\left( E\right) =\mathfrak{num}\left( F\right) $;

\item[$(v)$] \textbf{Sum principle:} if $E\cap F=\varnothing $ then $%
\mathfrak{num}\left( E\cup F\right) =\mathfrak{num}\left( E\right) +%
\mathfrak{num}\left( F\right) ;$

\item[$(vi)$] \textbf{Product principle:} $\mathfrak{num}\left( E\times
F\right) =\mathfrak{num}\left( E\right) \cdot \mathfrak{num}\left( F\right) $%
;

\item[$(vii)$] \textbf{Finite parts principle:} $\mathfrak{num}\left( 
\mathbf{\wp }_{fin}\left( E\right) \right) =2^{\mathfrak{num}\left( E\right)
}$;

\item[$(viii)$] \textbf{Finite functions principle:} let $E$ be nonempty,
and 
\begin{equation*}
\mathfrak{F}_{fin}\left( X,E\right) :=\left\{ f:D\rightarrow E\mid D\in 
\mathbf{\wp }_{fin}(X)\right\} .
\end{equation*}%
Then, if $a\in E$, we have 
\begin{equation*}
\mathfrak{num}\left( \mathfrak{F}_{fin}\left( X,E\backslash \left\{
a\right\} \right) \right) =\mathfrak{num}\left( E\right) ^{\mathfrak{num}%
(X)}.
\end{equation*}
\end{enumerate}
\end{theorem}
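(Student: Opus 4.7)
My strategy is uniform across the eight items: reduce each claim to a pointwise (or cofinally true) identity among the integer-valued nets $\lambda\mapsto|E_\lambda|$, where $E_\lambda=E\cap\mathbb{V}(\ell(\lambda))$ is the finite set given by Proposition~\ref{pro}(x),(xi), and then transport the identity to $\mathbb{E}$ via the ring-homomorphism and monotonicity properties of the $\Lambda$-limit $J$, using the compatibility $I_\lambda\subseteq I_\mu\Leftrightarrow\lambda\sqsubseteq\mu$ built into $(\mathfrak{L}_0(\mathfrak{B}),\sqsubseteq)$.

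I would begin with the ``arithmetic-free'' items (iii), (i), (v), (ii). For (iii), if $\lambda=\mathbb{V}_m(\mathfrak{t})\in\mathfrak{L}_0(\mathfrak{B})$, Proposition~\ref{pro}(vi) gives $\ell(\lambda)=\mathfrak{t}$, so $E_\lambda=E\cap\mathbb{V}(\mathfrak{t})$; since $E$ has a finite rank $n$, for every $m\ge n$ this set coincides with $E\cap\mathbb{V}_m(\mathfrak{t})=E\cap\lambda$, so the nets $|E\cap\lambda|$ and $|E_\lambda|$ agree cofinally in $\sqsubseteq$ and their $\Lambda$-limits are equal. For (i), the net $|E_\lambda|$ is eventually the constant $|E|$ (take $\lambda\sqsupseteq\ell(E)$), and the $\Lambda$-limit of a constant real is that real. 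For (v), disjointness yields the pointwise identity $|(E\cup F)_\lambda|=|E_\lambda|+|F_\lambda|$, and additivity of $J$ closes. For (ii), I pick $x_0\in F\setminus E$; for every $\lambda$ with $\ell(\{x_0\})\subseteq\ell(\lambda)$ one has $|F_\lambda|\ge|E_\lambda|+1$, so monotonicity gives $\mathfrak{num}(F)-\mathfrak{num}(E)\ge 1>0$.

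The remaining items (iv), (vi), (vii), (viii) rest on the labelling of compound objects from Proposition~\ref{pro}(viii),(ix) together with a trivial induction yielding $\ell(\{x_1,\dots,x_k\})=\bigcup_i\ell(x_i)$ for any finite family. For (iv), a label-preserving bijection $\Phi$ restricts to bijections $E_\lambda\to F_\lambda$, hence $|E_\lambda|=|F_\lambda|$ for every $\lambda$. For (vi), $\ell((x,y))=\ell(x)\cup\ell(y)$ forces $(E\times F)_\lambda=E_\lambda\times F_\lambda$, and the product rule for $\Lambda$-limits concludes. For (vii), the inductive formula identifies $\mathbf{\wp}_{fin}(E)_\lambda$ with $\mathbf{\wp}(E_\lambda)$, of cardinality $2^{|E_\lambda|}$; passing to the $\Lambda$-limit (with $2^\xi$, for $\xi=\mathfrak{num}(E)\in\mathbb{E}$, defined as the $\Lambda$-limit of the net $2^{|E_\lambda|}$) yields the formula. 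For (viii), the same analysis gives $\mathfrak{F}_{fin}(X,E\setminus\{a\})_\lambda=\mathfrak{F}_{fin}(X_\lambda,E_\lambda\setminus\{a\})$; the bijection sending a pair $(D,f)$ with $D\subseteq X_\lambda$ and $f:D\to E_\lambda\setminus\{a\}$ to the total map $g:X_\lambda\to E_\lambda$ equal to $f$ on $D$ and identically $a$ on $X_\lambda\setminus D$ shows this set has cardinality $|E_\lambda|^{|X_\lambda|}$, and a final $\Lambda$-limit (again by definition of hyperfinite exponentiation) produces $\mathfrak{num}(E)^{\mathfrak{num}(X)}$.

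The main obstacle I anticipate is the label bookkeeping for compound objects, in particular for partial functions, which are Kuratowski pairs of Kuratowski pairs, and for finite parts where one must pass from the two-element formula of Proposition~\ref{pro}(viii) to arbitrary finite unions. Once that combinatorial layer is under control, every remaining step is a transparent application of the ring-homomorphism and monotonicity properties of $J$, and the particular ultrafilter used in constructing $\mathbb{E}$ plays no visible role, in agreement with the independence claim preceding the theorem.
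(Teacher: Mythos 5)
Your proposal is correct and follows essentially the same route as the paper: reduce each item to an eventual or pointwise identity of the finite counting nets $\lambda\mapsto|E_\lambda|$ (via Proposition \ref{pro}, the rank argument for (iii), the pair-label formula for (vi), the identification $[\mathbf{\wp}_{fin}(E)]_\lambda=\mathbf{\wp}(E_\lambda)$ for (vii), and the extend-by-$a$ bijection for (viii)), then pass to the $\Lambda$-limit using the homomorphism and monotonicity properties of $J$. Your treatments of (i) and (ii) are in fact slightly more careful than the paper's (which asserts the constant net for \emph{every} $\lambda$ and contains a typo in the strict inequality), since you make the "eventually" and the $|F_\lambda|\ge|E_\lambda|+1$ step explicit.
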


\begin{proof} (i) If $|E|=n<\infty $, then for every $\lambda \in 
\mathfrak{L}$, we have $\left\vert E\cap \lambda \right\vert =n$, and the
thesis then follows by taking the $\lambda $-limit.

(ii) If $E\subset F$, eventually $\left\vert E\cap \lambda \right\vert
<\left\vert F\cap \lambda \right\vert $, so $\lim_{\lambda \uparrow \Lambda
}\left\vert E\cap \lambda \right\vert <\lim_{\lambda \uparrow \Lambda
}\left\vert E\cap \lambda \right\vert $.

(iii) Take $\lambda =\mathbb{V}_{m}(\mathfrak{s})$ with $m\geq rank(E)$
and $\mathfrak{s\in B}$. Then, by Proposition \ref{pro}.\ref{l3}%
\begin{equation*}
E\cap \lambda =E\cap \mathbb{V}_{m}(\mathfrak{s})=E\cap \mathbb{V}(\mathfrak{%
s})=E_{\lambda }.
\end{equation*}

(iv) By hypothesis we have that for all $\lambda \in \mathfrak{L}$ $%
|E_{\lambda }|=|F_{\lambda }|$, and so by the labels principle 
\begin{equation}
\mathfrak{num}\left( E\right) =\lim_{\lambda \uparrow \Lambda }\ \left\vert
E\cap \lambda \right\vert =\lim_{\lambda \uparrow \Lambda }\ \left\vert
F\cap \lambda \right\vert =\mathfrak{num}\left( F\right) .  \label{nina}
\end{equation}

(v) Just notice that $|E\cup F|_{\lambda }=|E_{\lambda }|+|F_{\lambda
}|$ for every $\lambda \in \mathfrak{L}$, hence the thesis follows by
Definition \ref{DR}.(2) and by the labels principle.

(vi) Let $\lambda \in \mathfrak{L}$. By property (ix) in
Proposition \ref{pro}, we have that $\left( E\times F\right) _{\lambda
}=E_{\lambda }\times F_{\lambda }$, hence $|\left( E\times F\right)
_{\lambda }|=|E_{\lambda }\times F_{\lambda }|=|E_{\lambda }|\cdot
|F_{\lambda }|$, and the thesis then follows immediately, again by the labels principle.

(vii) Let $\lambda =\mathbb{V}_{m}(\mathfrak{s})\in \mathfrak{L}_{0}$ ($%
m>rank(E))$, and let $a\in \mathbf{\wp }_{fin}\left( E\right) \cap \mathbb{V}(%
\mathfrak{s})$. Then by Proposition \ref{pro}.(x) we have that it must be $%
a\in \mathbf{\wp }_{fin}\left( E_{\lambda }\right) $. Conversely, if $a\in 
\mathbf{\wp }_{fin}\left( E_{\lambda }\right) $ it is immediate to see that $%
a\in \mathbf{\wp }_{fin}\left( E\right) \cap \mathbb{V}(\mathfrak{s})$.
Hence, by Proposition \ref{pro} we have 
\begin{equation*}
\left\vert \left[ \mathbf{\wp }_{fin}\left( E\right) \right] _{\lambda
}\right\vert =\left\vert \mathbf{\wp }_{fin}\left( E\right) \cap \mathbb{V}(%
\mathfrak{s})\right\vert =\left\vert \mathbf{\wp }_{fin}\left( E_{\lambda
}\right) \right\vert =2^{\left\vert E_{\lambda }\right\vert },
\end{equation*}%
and so by the labels principle 
\begin{equation*}
\mathfrak{num}\left( \mathbf{\wp }_{fin}\left( E\right) \right)
=\lim_{\lambda \uparrow \Lambda }\ 2^{\left\vert E_{\lambda }\right\vert
}=2^{\mathfrak{num}\left( E\right) }.
\end{equation*}

(viii) We set $\lambda =\mathbb{V}_{m}(\mathfrak{s})\in \mathfrak{L}_{0}$%
, $m>rank(f).$ Let $f\in \mathfrak{F}_{fin}\left( X,E\backslash \{a\}\right)
\cap \mathbb{V}(\mathfrak{s})$, and let $D$ be the domain of $f$. By
identifying functions with Kuratowski pairs, and by our definition of
labellings on pairs, it is immediate to see that $f\in \mathfrak{F}%
_{fin}\left( X,E\backslash \{a\}\right) \cap \mathbb{V}(\mathfrak{s})$ if
and only if $D\left( f\right) \subset X\cap \mathbb{V}(\mathfrak{s}%
)=X_{\lambda }$ and $Im\left( f\right) \subset \left( E\backslash
\{a\}\right) \cap \mathbb{V}(\mathfrak{s})=E_{\lambda }\backslash \{a\}.$
Therefore 
\begin{equation*}
\mathfrak{F}_{fin}\left( X,E\backslash \{a\}\right) \cap \mathbb{V}(%
\mathfrak{s})=\mathfrak{F}_{fin}\left( X_{\lambda },E_{\lambda }\backslash
\{a\}\right) .
\end{equation*}

Notice that 
\begin{equation*}
\left\vert \mathfrak{F}_{fin}\left( X_{\lambda },E_{\lambda }\backslash
\{a\}\right) \right\vert =\left\vert \mathfrak{F}\left( X_{\lambda
},E_{\lambda }\right) \right\vert .
\end{equation*}%
In fact, the association $g\in \mathfrak{F}_{fin}\left( X_{\lambda
},E_{\lambda }\backslash \{a\}\right) \rightarrow \widetilde{g}\in \mathfrak{%
F}\left( X_{\lambda },E_{\lambda }\right) $, with 
\begin{equation*}
\widetilde{g}(x)=%
\begin{cases}
g(x), & \text{if}\ x\in X_{\lambda }; \\ 
a, & \text{otherwise}%
\end{cases}%
\end{equation*}%
is a bijection. Hence, again by the labels principle, 
\begin{eqnarray*}
\mathfrak{num}\left( \mathfrak{F}_{fin}\left( X,E\backslash \{a\}\right)
\right) &=&\lim_{\lambda \uparrow \Lambda }\ \left\vert \mathfrak{F}%
_{fin}\left( X,E\backslash \{a\}\right) \cap \mathbb{V}(\lambda )\right\vert
=\lim_{\lambda \uparrow \Lambda }\ \left\vert \mathfrak{F}\left( X_{\lambda
},E_{\lambda }\right) \right\vert \\
&=&\lim_{\lambda \uparrow \Lambda }\ \left\vert E_{\lambda }\right\vert
^{\left\vert X_{\lambda }\right\vert }=\mathfrak{num}\left( E\right) ^{%
\mathfrak{num}\left( X\right) }.
\end{eqnarray*}

\end{proof}

\section{Ordinal numbers and numerosities\label{Ord1}}

In this section we will select a subset of the numerosities which we will
call ordinal numerosities (or simply ordinals). This set, equipped with its
natural order relation $<$, is isomorphic to the set of ordinal numbers. In
Section \ref{SPO} we will show that this correspondence is deeper than
expected since it preserves also the natural operations between ordinals.

\subsection{The ordinal numerosities}

Let $\mathbf{Num}$ be the set of numerosities.

\begin{definition}
\label{nuovi ord}The set $\mathbf{Ord}\subset \mathbf{Num}$ of ordinal
numerosities is defined as follows: $\tau \in \mathbf{Ord}$ if and only if%
\begin{equation*}
\tau =\mathfrak{num}\left( \Omega _{\tau }\right),
\end{equation*}%
where 
\begin{equation*}
\Omega _{\tau }=\left\{ x\in \mathbf{Ord\ |\ }x<\tau \right\}.
\end{equation*}
\end{definition}

It is easy to see by transfinite induction that this is a good definition.
In fact, it is immediate to check that

\begin{itemize}
\item $0\in \mathbf{Ord}$;

\item if $\tau \in \mathbf{Ord,}$ then $\tau +1=\mathfrak{num}\left( \Omega
_{\tau }\cup \left\{ \tau \right\} \right) \in \mathbf{Ord}$ (and hence $%
\mathbb{N}\subset \mathbf{Ord}$).
\end{itemize}

Moreover, if $\tau _{k}=\mathfrak{num}\left( \Omega _{k}\right) ,$ $k\in K,$
($\left\vert K\right\vert <\kappa $) are ordinal numerosities, then 
\begin{equation*}
\tau :=\mathfrak{num}\left( \dbigcup\limits_{k\in K}\Omega _{k}\right) \in 
\mathbf{Ord.}
\end{equation*}

In fact, this holds as $\dbigcup\limits_{k\in K}\Omega _{k}=\{x\in\mathbf{Ord%
}\mid x<\tau\}$: the inclusion $\dbigcup\limits_{k\in K}\Omega _{k}\subseteq
\{x\in\mathbf{Ord}\mid x<\tau\}$ holds trivially, as if $x\in\dbigcup%
\limits_{k\in K}\Omega _{k}$ then $x\in\mathbf{Ord}$ and $x\in\Omega_{k}$
for some $k$, and so $x<\tau_{k}<\tau$; conversely, if $x\in\mathbf{Ord}$ is
such that $x<\tau$, if $x\notin\dbigcup\limits_{k\in K}\Omega _{k}$ we would
have that $\Omega_{x}\supseteq \dbigcup\limits_{k\in K}\Omega _{k}$, and so
by taking numerosities we would get $x\geq\tau$, which is absurd.

\begin{definition}
If $\tau _{k},$ $k\in K,$ ($\left\vert K\right\vert <\kappa $) are ordinals,
we set 
\begin{equation*}
\underset{k\in K}{\sup }\tau _{k}=\mathfrak{num}\left( \dbigcup\limits_{k\in
K}\Omega _{\tau _{k}}\right),
\end{equation*}%
where $\tau _{k}=\mathfrak{num}\left( \Omega _{\tau _{k}}\right) .$
\end{definition}

Then $\tau =\sup_{k\in K}\tau _{k}$ is the least element in $\mathbf{Ord}$
equal or greater than every $\tau _{k},$ namely $\tau \in \mathbf{Ord}$ and 
\begin{equation}
\forall k\in K,\ \tau \geq \tau _{k};  \label{anna}
\end{equation}%
\begin{equation}
\forall k\in K,\ \forall \xi \geq \tau _{k}\Rightarrow \xi \geq \tau .
\label{sara}
\end{equation}

However $\tau $ is not the least element in $\mathbf{Num}$ greater or equal
to every $\tau _{k}\mathbf{.}$ In fact, as we have seen, if $\sup_{k\in
K}\tau _{k}$ is not a maximum, there are numerosities $\xi \in \mathbb{E}$ ,
greater that every $\tau _{k}$ and smaller than $\tau$, e.g. $%
\left(\sup_{k\in K}\tau_{k}\right)-1$.

Our construction of the ordinal numbers is similar to the construction of
Von Neumann. However, whilst a Von Neumann ordinal $\tau $ is the set of all
the Von Neumann ordinals contained in $\tau $, in our construction an
ordinal $\tau $ is the numerosity of the set of ordinals smaller than $\tau$%
. Hence, here, an ordinal number, as any other numerosity, is an atom.

Obviously, not all numerosities are ordinals: for example, $\mathfrak{num}%
\left( \mathbb{N}\right) $ is not an ordinal. In fact, if $\alpha =\mathfrak{%
num}\left( \mathbb{N}\right) $ were an ordinal then: 
\begin{eqnarray*}
\alpha &=&\mathfrak{num(}\left\{ x\in \mathbf{Ord}\ |\ x<\mathfrak{num}%
\left( \mathbb{N}\right) \right\} )=\mathfrak{num}(\mathbb{N}_{0}) \\
&=&\mathfrak{num}(\mathbb{N}\cup \left\{ 0\right\} )=\alpha +1.
\end{eqnarray*}%
In a similar way, one can prove that no infinite numerosity smaller than $%
\mathfrak{num}\left( \mathbb{N}\right) $ is an ordinal. However, $\alpha +1$
is an ordinal: 
\begin{equation*}
\alpha +1=\mathfrak{num}\left( \mathbb{N}_{0}\right) =\mathfrak{num}\left(
\{x\in \mathbf{Ord}\ |\ x<\alpha \}\right) .
\end{equation*}%
Actually $\alpha +1$ is the smallest infinite ordinal. From now on, we will
call it $\omega $.

As we expect, $\mathbf{Ord}$ is a well ordered set; in fact is $E\subset 
\mathbf{Ord,}$ the minimum is given by%
\begin{equation*}
\min \ E=\sup \left\{ x\in \mathbf{Ord}\ |\ \forall a\in E,\ x\leq a\right\}.
\end{equation*}

\subsection{Sums and products of ordinals\label{SPO}}

In this section we will show that the set of ordinal numerosities is closed
under sums and products, and we will show that there is relationship between
sums and products of ordinal numerosities and the natural operations between
Cantor ordinals.

First, we start by showing that the operations between numerosities are
consistent with the order structure over the ordinals.

\begin{theorem}
\label{figo}For all ordinal numbers $\sigma ,\tau \in \mathbf{Ord}$ we have
that 
\begin{eqnarray*}
\mathfrak{num}(\Omega _{\sigma })+\mathfrak{num}(\Omega _{\tau }) &=&%
\mathfrak{num}(\Omega _{\sigma +\tau }); \\
\mathfrak{num}(\Omega _{\sigma })\cdot \mathfrak{num}(\Omega _{\tau }) &=&%
\mathfrak{num}(\Omega _{\sigma \tau }).
\end{eqnarray*}

In particular, $\sigma +\tau \in \mathbf{Ord}$ and $\sigma \tau \in \mathbf{%
Ord}$.
\end{theorem}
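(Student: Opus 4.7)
My plan is to decompose each of $\Omega_{\sigma+\tau}$ and $\Omega_{\sigma\tau}$ into pieces whose numerosities can be computed via the sum, product and comparison principles of Theorem~\ref{1}. For the sum, I would rely on the fact that every ordinal below $\sigma+\tau$ is either below $\sigma$ or of the form $\sigma+\eta$ with $\eta<\tau$ (the standard description of initial segments under ordinal addition); for the product, I would use the division algorithm to write every ordinal below $\sigma\tau$ uniquely as $\sigma\eta+\xi$ with $\eta<\tau$ and $\xi<\sigma$. Once these disjoint decompositions are in hand, the claim reduces to showing that translating/indexing by $\sigma$ does not alter numerosity.

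Concretely, for the sum I would set
$$\Omega_{\sigma+\tau}=\Omega_\sigma\,\sqcup\,T,\qquad T=\{\sigma+\xi:\xi\in\Omega_\tau\},$$
apply the sum principle (Theorem~\ref{1}.(v)) to get $\mathfrak{num}(\Omega_{\sigma+\tau})=\mathfrak{num}(\Omega_\sigma)+\mathfrak{num}(T)$, and then invoke the comparison principle (Theorem~\ref{1}.(iv)) for the bijection $\xi\mapsto \sigma+\xi$ between $\Omega_\tau$ and $T$ to conclude $\mathfrak{num}(T)=\mathfrak{num}(\Omega_\tau)$. For the product, I would employ the bijection
$$\Omega_\sigma\times\Omega_\tau\longrightarrow\Omega_{\sigma\tau},\qquad(\xi,\eta)\mapsto \sigma\eta+\xi,$$
and combine the product principle (Theorem~\ref{1}.(vi)) with the comparison principle. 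The closure statements $\sigma+\tau\in\mathbf{Ord}$ and $\sigma\tau\in\mathbf{Ord}$ then follow immediately from Definition~\ref{nuovi ord}, since in each case the resulting numerosity is exhibited as $\mathfrak{num}(\Omega_\rho)$ for the relevant $\rho$.

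The main obstacle is verifying that the bijections $\xi\mapsto \sigma+\xi$ and $(\xi,\eta)\mapsto \sigma\eta+\xi$ are genuinely label-preserving, as required by the comparison principle. Each ordinal numerosity is an atom produced by the map $\Phi$, so its label is not a priori transparent; Proposition~\ref{pro} at best gives $\ell(\sigma+\xi)\subseteq\ell(\sigma)\cup\ell(\xi)$, which is weaker than the equality $\ell(\sigma+\xi)=\ell(\xi)$ needed here. Resolving this point is presumably where the freedom in choosing the label set $\mathfrak{B}$ must be exploited (the construction promised in Section~\ref{NUOVA}). As a fallback, I would bypass the comparison principle and argue directly via the labels principle (Theorem~\ref{1}.(iii)): for $\lambda=\mathbb{V}_m(\mathfrak{s})$ with $m$ and $\mathfrak{s}$ sufficiently large, set up a finitary bijection between $T\cap\mathbb{V}(\mathfrak{s})$ and $\Omega_\tau\cap\mathbb{V}(\mathfrak{s})$ (and likewise in the product case) by a transfinite induction tracking how ordinals below $\sigma+\tau$ or $\sigma\tau$ are generated from the elements of $\mathfrak{s}$, and then take $\Lambda$-limits.
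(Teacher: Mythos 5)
Your plan founders on a point more basic than the one you flagged: the operations $+$ and $\cdot$ in Theorem \ref{figo} are the field operations of $\mathbb{E}$, which are commutative, and on ordinal numerosities they realize the \emph{natural} (Hessenberg) operations --- this is exactly the content of Theorem \ref{bello} --- not the ordinal sum and product. The initial-segment decompositions you invoke are the ones valid for \emph{ordinal} arithmetic, and they fail here. Concretely, take $\sigma =1$ and $\tau =\omega =\alpha +1$, so that $\Omega _{\tau }=\mathbb{N}_{0}$. In $\mathbb{E}$ we have $1+\omega =\omega +1$ and $\omega <\omega +1$, hence $\omega \in \Omega _{\sigma +\tau }=\mathbb{N}_{0}\cup \{\omega \}$; but $\Omega _{\sigma }\sqcup T=\{0\}\cup \{1+n\mid n\in \mathbb{N}_{0}\}=\mathbb{N}_{0}$ misses $\omega $ (the only candidate preimage is $\eta =\omega -1=\alpha $, which the paper shows is \emph{not} an ordinal), so your set identity $\Omega _{\sigma +\tau }=\Omega _{\sigma }\sqcup T$ is false: the right-hand side has numerosity $\omega \neq 1+\omega $. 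The product decomposition fails the same way: for $\sigma =2$, $\tau =\omega $, the image of $(\xi ,\eta )\mapsto \sigma \eta +\xi $ consists of finite numbers only, while $\omega <2\omega $ lies in $\Omega _{\sigma \tau }$. The structural reason is commutativity: your decomposition is order-compatible (every element of $T$ dominates $\Omega _{\sigma }$ and $\xi \mapsto \sigma +\xi $ is increasing), so it would compute $ot(\Omega _{\sigma +\tau })$ as the ordinal sum $\bar{\sigma}+\bar{\tau}$, and, since $\sigma +\tau =\tau +\sigma $ in $\mathbb{E}$, also as $\bar{\tau}+\bar{\sigma}$ --- impossible when ordinal addition does not commute. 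There is also a circularity: even asserting $T\subseteq \mathbf{Ord}$ presupposes $\sigma +\xi \in \mathbf{Ord}$ for $\xi <\tau $, i.e.\ instances of the closure statement being proved, so any correct argument must be set up as an induction, which your one-shot decomposition is not.

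The second obstacle, which you do flag honestly, is equally real: ordinal numerosities are atoms (values of the map $\Phi $ into $\mathbb{A}$), the label sets constructed in the paper impose nothing on the labels of these atoms, and Proposition \ref{pro} yields only inclusions; nothing supports $\ell (\sigma +\xi )=\ell (\xi )$, so the comparison principle (Theorem \ref{1}.(iv)) is simply unavailable, and your fallback via finitary bijections and $\Lambda $-limits is a sketch with no mechanism behind it. The paper's proof avoids bijections and labels altogether: it is a transfinite induction on $\tau $. At successor stages $\tau =\gamma +1$ one has $\Omega _{\sigma +\gamma +1}=\Omega _{\sigma +\gamma }\cup \{\sigma +\gamma \}$, where the inductive hypothesis supplies $\sigma +\gamma \in \mathbf{Ord}$, so only the sum principle for adjoining a single point is needed; at limit stages $\tau =\sup_{k}\tau _{k}$ one works with the order-theoretic characterization (\ref{anna})--(\ref{sara}) of the supremum inside $\mathbf{Ord}$; and the product case reduces, via $\Omega _{\sigma (\gamma +1)}=\Omega _{\sigma \gamma +\sigma }$ and distributivity, to the additive one. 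If you insist on a decomposition-style argument, the correct decompositions are the $\theta _{j}$-normal forms of Proposition \ref{tsena}, adapted to the natural operations --- but the labelling obstruction would still block the comparison principle, so the induction is the right tool.
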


\begin{proof} First let us prove that 
\begin{equation*}
\mathfrak{num}(\Omega _{\sigma +\tau })=\mathfrak{num}(\Omega _{\sigma })+%
\mathfrak{num}(\Omega _{\tau })
\end{equation*}%
acting by induction on $\tau .$ If $\tau =0,$ then this relation is obvious.
If $\tau =\gamma +1,$ then%
\begin{eqnarray*}
\mathfrak{num}(\Omega _{\sigma +\tau }) &=&\mathfrak{num}(\Omega _{\sigma
+\gamma +1})=\mathfrak{num}(\Omega _{\sigma +\gamma }\cup \left\{ \sigma
+\gamma +1\right\} )\\&=&\mathfrak{num}(\Omega _{\sigma +\gamma })+\mathfrak{num}%
(\left\{ \sigma +\gamma +1\right\} ) =\mathfrak{num}(\Omega _{\sigma })+\mathfrak{num}(\Omega _{\gamma })+1\\&=&
\mathfrak{num}(\Omega _{\sigma })+\mathfrak{num}(\Omega _{\gamma }\cup
\left\{ \gamma +1\right\} )=\mathfrak{num}(\Omega _{\sigma })+\mathfrak{num}(\Omega _{\tau }).
\end{eqnarray*}%
If $\tau =\ \sup_{k\in K}\tau _{k},\ $(where $\tau _{k}=\mathfrak{num}\left(
\Omega _{k}\right) $), is a limit ordinal, then 
\begin{equation*}
\mathfrak{num}(\Omega _{\sigma +\tau })=\sup_{k\in K}\ \mathfrak{num}(\Omega
_{\sigma +\tau _{k}})=\sup_{k\in K}\ \left[ \mathfrak{num}(\Omega _{\sigma
})+\mathfrak{num}(\Omega _{\tau _{k}})\right].
\end{equation*}%
Since $\sigma +\tau _{k}=\mathfrak{num}(\Omega _{\sigma })+\mathfrak{num}%
(\Omega _{\tau _{k}})$ is an ordinal number, $\tau $ satisfies (\ref{anna})
and (\ref{sara}) and hence, 
\begin{equation*}
\forall k\in K,\ \sigma +\tau \geq \sigma +\tau _{k};
\end{equation*}
\begin{equation*}
\forall k\in K,\ \forall \xi\in\mathbf{Ord} \ \sigma +\xi \geq \sigma +\tau _{k}\Rightarrow \sigma +\xi
\geq \sigma +\tau.
\end{equation*}%
Then, 
\begin{equation*}
\sup_{k\in K}\ \left( \sigma +\tau _{k}\right) =\sigma +\sup_{k\in K}\ \tau
_{k}
\end{equation*}%
and so 
\begin{equation*}
\mathfrak{num}(\Omega _{\sigma +\tau })=\sigma +\sup_{k\in K}\ \tau _{k}=%
\mathfrak{num}(\Omega _{\sigma })+\sup_{k\in K}\ \left[ \mathfrak{num}%
(\Omega _{\tau _{k}}\right] =\mathfrak{num}(\Omega _{\sigma })+\mathfrak{num}%
(\Omega _{\tau }).
\end{equation*}%
Similarly we act with the product. If $\tau =0,$ then this relation is
obvious. If $\tau =\gamma +1,$ then%
\begin{eqnarray*}
\mathfrak{num}(\Omega _{\sigma \tau }) &=&\mathfrak{num}(\Omega _{\sigma
\left( \gamma +1\right) })=\mathfrak{num}(\Omega _{\sigma \gamma +\sigma })=%
\mathfrak{num}(\Omega _{\sigma \gamma })+\mathfrak{num}(\Omega _{\sigma }) \\
&=&\mathfrak{num}(\Omega _{\sigma })\cdot \mathfrak{num}(\Omega _{\gamma })+%
\mathfrak{num}(\Omega _{\sigma })=\mathfrak{num}(\Omega _{\sigma })\left[ 
\mathfrak{num}(\Omega _{\gamma })+1\right] \\
&=&\mathfrak{num}(\Omega _{\sigma })\cdot \mathfrak{num}(\Omega _{\tau }).
\end{eqnarray*}%
If $\tau =\ \sup_{k\in K}\tau _{k}\ $(where $\tau _{k}=\mathfrak{num}\left(
\Omega _{k}\right) $), is a limit ordinal, then 
\begin{equation*}
\mathfrak{num}(\Omega _{\sigma \tau })=\sup_{k\in K}\ \mathfrak{num}(\Omega
_{\sigma \tau _{k}})=\sup_{k\in K}\ \left[ \mathfrak{num}(\Omega _{\sigma
})\cdot \mathfrak{num}(\Omega _{\tau _{k}})\right].
\end{equation*}%
Since $\tau $ satisfies (\ref{anna}) and (\ref{sara}), 
\begin{equation*}
\forall k\in K,\ \sigma \tau \geq \sigma \tau _{k};
\end{equation*}%
\begin{equation*}
\forall k\in K,\ \forall\xi\in\mathbf{Ord} \  \sigma \xi \geq \sigma \tau _{k}\Rightarrow \sigma \xi \geq
\sigma \tau.
\end{equation*}%
Then%
\begin{equation*}
\sup_{k\in K}\ \left( \sigma \tau _{k}\right) =\sigma \cdot \sup_{k\in K}\
\tau _{k},
\end{equation*}%
hence 
\begin{equation*}
\mathfrak{num}(\Omega _{\sigma \tau })=\sigma \cdot \sup_{k\in K}\ \tau _{k}=%
\mathfrak{num}(\Omega _{\sigma })\cdot \mathfrak{num}(\Omega _{\tau }).\qedhere
\end{equation*}

\end{proof}

\subsection{Numerosities and Cantor ordinals}

The relation with the Cantor definition of ordinal is the following: if $%
\tau \in \mathbf{Ord,}$ $\Omega _{\tau }$ is a well ordered set and hence $%
ot(\Omega _{\tau })$ (the \textit{order type} of $\Omega _{\tau })$ is a
Cantor ordinal. From now on, to avoid confusion, we will denote the Cantor
ordinals by $\bar{\tau}$ and their set by $\mathbf{COrd}$. Whilst a Cantor
ordinal is an equivalence class of well-ordered sets, in our definition an
ordinal is the numerosity of a suitable well ordered set; in particular, if
we let $\omega $ be the smallest infinite ordinal, then $\omega =\mathfrak{%
num}\left( \mathbb{N}_{0}\right) $ and $\bar{\omega}=ot(\mathbb{N}_{0})$.

Now, let us consider the map 
\begin{equation}
\Phi :\mathbf{Ord}\rightarrow \mathbf{COrd:\ \ }\Phi \left( \tau \right)
=ot(\Omega _{\tau }):=\bar{\tau}  \label{fab}
\end{equation}%
which identifies the "numerosity ordinals" with the "Cantor ordinals". So,
by construction $\Phi $ is an isomorphism between the ordered sets $\left( 
\mathbf{Ord,<}\right) $ and $\left( \mathbf{COrd,<}\right) .$

In general, the map does not preserve the operations $+,\cdot $, as $+$ and $%
\cdot $ are commutative on $\mathbf{Num}\subset \mathbb{N}^{\ast }$ but not
on $\mathbf{COrd}$. However, the situation is more interesting if we
consider the natural operations $\oplus ,\otimes $ between ordinals. We
recall that each ordinal $\bar{\sigma}$ has a unique \emph{normal form} 
\begin{equation*}
\bar{\sigma}=\sum_{n=0}^{m}\bar{\omega}^{j_{n}}a_{n}
\end{equation*}%
where $a_{n}\in \mathbb{N}$ and $n_{1}<n_{2}\Rightarrow j_{n_{1}}>j_{n_{2}}$.

By using the normal form, the \emph{natural ordinal operations} can be
defined as follows: given 
\begin{equation}
\bar{\sigma}=\sum_{n=0}^{m}\bar{\omega}^{j_{n}}a_{n}\,\,\ \mathrm{and}\,\,\ {%
\bar{\tau}}=\sum_{n=0}^{m}\bar{\omega}^{j_{n}}b_{n}  \label{abba+}
\end{equation}%
we let 
\begin{equation}
\bar{\sigma}\oplus {\bar{\tau}}=\sum_{n=0}^{m}\bar{\omega}^{j_{n}}\left(
a_{n}+b_{n}\right) \ \ \mathrm{and}\,\,\,\bar{\sigma}{\otimes \bar{\tau}}%
=\bigoplus_{n,h=0}^{m}a_{n}b_{h}\bar{\omega}^{j_{n}\oplus j_{h}},
\label{nop}
\end{equation}%
where $a_{n}+b_{n}$ and $a_{l}b_{m}$ are the usual operations on natural
numbers.

In order to compare the operations between numerosities and the natural
ordinal operations, we extend a notion used for the Cantor ordinals to the
numerosities .

\begin{definition}
An ordinal $\theta >0$ is called irreducible if 
\begin{equation*}
\sigma ,\tau ,\gamma <\theta \Rightarrow \sigma \tau +\gamma <\theta
\end{equation*}
\end{definition}

If $\theta $ is irreducible then 
\begin{equation*}
\sigma ,\tau \in \Omega _{\theta }\Rightarrow \sigma +\gamma <\theta \ \text{%
and}\ \sigma \tau <\theta ;
\end{equation*}%
we need to prove that $\sigma +\gamma \ $and$\ \sigma \tau \in \Omega
_{\theta }$.

We denote by $\theta _{j}$, $j\in \mathbf{Ord}$ the sequence of irreducible
ordinals, namely

\begin{itemize}
\item $\theta _{0}=\omega ,\ $

\item $\theta _{j}=\min $ $\left\{ x\in \mathbf{Ord}\ \mathbf{|\ }\forall
m\in \mathbb{N}_{0},\ \forall k<j,\ x>\theta _{k}^{m}\right\} .$
\end{itemize}

\begin{proposition}
\label{tsena}If $\tau \in \mathbf{Ord}$, we have that%
\begin{equation*}
\tau <\theta _{j+1}\Leftrightarrow \tau =\sum_{k=0}^{m}b_{k}\theta _{j}^{k}
\end{equation*}%
with $b_{k}\in \Omega _{\theta _{j}}.$
\end{proposition}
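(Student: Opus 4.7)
The proposition states a base-$\theta_j$ expansion for ordinal numerosities below $\theta_{j+1}$, entirely analogous to the Cantor normal form base $\omega$. I would prove the two directions separately, by finite induction on $m$, exploiting the fact that $\theta_{j+1}$ is irreducible.

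For the sufficiency ($\Leftarrow$), assume $\tau = \sum_{k=0}^{m} b_k \theta_j^k$ with each $b_k \in \Omega_{\theta_j}$. The base case $m=0$ is immediate since $\tau = b_0 < \theta_j < \theta_{j+1}$. For the inductive step, decompose $\sum_{k=0}^{m+1} b_k \theta_j^k = b_{m+1}\theta_j^{m+1} + \sum_{k=0}^{m} b_k \theta_j^k$. Both $b_{m+1} < \theta_j$ and $\theta_j^{m+1}$ lie in $\Omega_{\theta_{j+1}}$, the latter directly from the definition of $\theta_{j+1}$; so irreducibility applied with $\gamma = 0$ gives $b_{m+1}\theta_j^{m+1} < \theta_{j+1}$. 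A second application of irreducibility, with $\sigma = 1$, the new $\tau = b_{m+1}\theta_j^{m+1}$, and $\gamma$ equal to the inductive sum, closes the step.

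For the necessity ($\Rightarrow$), I would first observe that $\theta_{j+1} = \sup_{m\in\mathbb{N}_0} \theta_j^m$: the sup is a strict upper bound of $\{\theta_j^m\}$ (because $\theta_j^m < \theta_j^{m+1}$), hence by minimality it coincides with $\theta_{j+1}$. Thus any $\tau < \theta_{j+1}$ satisfies $\theta_j^m \leq \tau < \theta_j^{m+1}$ for some largest $m \in \mathbb{N}_0$ (the case $\tau = 0$ is trivial). I then let $b_m$ be the largest element of $\Omega_{\theta_j}$ with $b_m \theta_j^m \leq \tau$; it is at least $1$ and strictly below $\theta_j$ (else $\tau \geq \theta_j^{m+1}$). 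Writing $\tau = b_m \theta_j^m + \tau'$ (ordinal subtraction in $\mathbf{Ord}$), I claim $\tau' < \theta_j^m$: otherwise $\tau \geq b_m\theta_j^m + \theta_j^m = (b_m+1)\theta_j^m$ (using commutativity of numerosity multiplication on ordinals), contradicting either the maximality of $b_m$ or the bound $\tau < \theta_j^{m+1}$. The inductive hypothesis applied to $\tau' < \theta_j^m$ then delivers the remaining coefficients $b_0, \dots, b_{m-1}$.

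The main obstacle is the ordinal division step in the necessity part: one needs a Euclidean-type division by $\theta_j^m$ within the commutative numerosity framework. I would justify it either by direct construction as above (taking the maximum admissible $b_m < \theta_j$ and subtracting) or, more robustly, by transferring the standard ordinal division algorithm for Cantor ordinals through the order-isomorphism $\Phi$ of \eqref{fab}; commutativity of the numerosity product is essential to convert $(b_m+1)\theta_j^m$ into $b_m\theta_j^m + \theta_j^m$.
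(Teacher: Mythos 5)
Your $(\Leftarrow)$ direction is fine (indeed it is even easier than you make it: since $b_k<\theta _j$ gives $b_k\theta _j^{k}<\theta _j^{k+1}$ in the ordered field $\mathbb{E}$, one gets $\sum_{k=0}^{m}b_k\theta _j^{k}<(m+1)\theta _j^{m+1}<\theta _j^{m+2}<\theta _{j+1}$ by pure order-field estimates, so irreducibility is not needed; the paper accordingly dismisses this direction as trivial). In the $(\Rightarrow)$ direction your greedy skeleton --- largest $m$ with $\theta _j^{m}\leq \tau $, then $b_m=\sup \{x\in \Omega _{\theta _j}\mid x\theta _j^{m}\leq \tau \}$ --- is exactly the paper's. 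But your induction on $m$ has a genuine gap at the step you yourself flag: to apply the inductive hypothesis to $\tau '=\tau -b_m\theta _j^{m}$ you need $\tau '\in \mathbf{Ord}$, and nothing you say justifies this. Note that $\mathbf{Ord}$ is \emph{not} closed under subtraction in this framework: the paper itself shows $\omega -1=\alpha =\mathfrak{num}(\mathbb{N})\notin \mathbf{Ord}$, so ``ordinal subtraction in $\mathbf{Ord}$'' is not an available operation, and the differences $\mathfrak{num}(\Omega _\tau \setminus \Omega _{b_m\theta _j^{m}})$ are a priori only Euclidean numbers. Your fallback --- transferring the ordinal division algorithm through the map $\Phi $ of (\ref{fab}) --- is circular: at this point of the development $\Phi $ is known only to be an \emph{order} isomorphism; its compatibility with the arithmetic is Theorem \ref{bello}, which is proved \emph{after} and \emph{using} Proposition \ref{tsena}. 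Moreover, even granting Theorem \ref{bello}, $\Phi $ relates $+,\cdot $ to the natural (Hessenberg) operations $\oplus ,\otimes $, not to the ordinary ordinal operations for which the standard division algorithm holds; natural subtraction fails in general (there is no $\bar{\rho}$ with $1\oplus \bar{\rho}=\bar{\omega}$), which is precisely the phenomenon behind $\alpha \notin \mathbf{Ord}$.

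The paper's proof sidesteps exactly this obstacle by never subtracting one term at a time and never requiring remainders to be ordinals: it defines \emph{all} coefficients $b_m,b_{m-1},\dots ,b_0$ greedily top-down (each $b_k$ a $\sup $ in $\Omega _{\theta _j}$, which exists since, as in the proof of Theorem \ref{figo}, $\sup $ commutes with the translated products), keeps each remainder $y_{j,k}=\tau -\sum_{l=k}^{m}b_l\theta _j^{l}$ merely as an element of $\mathbb{E}$, and then proves $y_{j,0}=0$ by \emph{transfinite induction on $j$} (base case: for $\tau \in \Omega _{\theta _0}=\mathbb{N}_0$ the remainder is a nonpositive natural number, hence $0$; inductive step: the bounds on the remainders reduce the claim for $\tau \in \Omega _{\theta _{j+1}}$ to the claim for elements of $\Omega _{\theta _j}$). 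To repair your argument you would either have to prove separately that the greedy leading-term remainder is an ordinal (essentially the content of the proposition, so this risks circularity), or restructure as the paper does, inducting on $j$ with remainders living in $\mathbb{E}$.
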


\begin{proof} This proof is based only on the order structure of $\mathbf{%
Ord}$ and hence it could be considered well known. However we will report it
for completeness and for the sake of the reader.

$\left( \Leftarrow \right) $ trivial.

$\left( \Rightarrow \right) $ If $\tau <\theta _{j+1},$ we take%
\begin{equation*}
n=\max \left\{ m\in \mathbb{N}_{0}\ |\ \theta _{j}^{m}\leq \tau \right\}
\end{equation*}%
Such an $m$ exists by the definition of $\theta _{j+1}.$ Then we set%
\begin{equation*}
b_{m}=\sup \left\{ x\in \Omega _{\theta _{j}}\ |\ x\theta _{j}^{m}\leq \tau
\right\}
\end{equation*}%
and%
\begin{equation*}
y_{j,m}=\tau -b_{m}\theta _{j}^{m}
\end{equation*}%
Then, 
\begin{equation}
\forall z\in \Omega _{\theta _{j}},\ y_{j,m}\leq z.  \label{xx}
\end{equation}%
Now, by inducion over $k=m-1,...,0$, we set%
\begin{equation*}
b_{k}=\sup \left\{ x\in \Omega _{\theta _{j}}\ |\
\sum_{l=k+1}^{m}b_{l}\theta _{j}^{l}+x\theta _{j}^{k}\leq \tau \right\}
\end{equation*}%
and 
\begin{equation*}
y_{j,k}=\tau -\sum_{l=k}^{m}b_{l}\theta _{j}^{l}
\end{equation*}%
so we have that,%
\begin{equation}
\forall z\in \Omega _{\theta _{j}},\ y_{j,m}\leq z  \label{xxx}
\end{equation}%
Now we claim that%
\begin{equation}
\tau -\sum_{k=0}^{m}b_{k}\theta _{j}^{k}=0  \label{cac}
\end{equation}%
In order to prove this we argue by iduction over $j\in \mathbf{Ord}\cup
\left\{ -1\right\} $ by proving that 
\begin{equation}
y_{jk}=0.  \label{x}
\end{equation}

If $j=-1,\ \tau \in \Omega _{\theta _{0}}=\mathbb{N}_{0},$ then $\forall
n\in \mathbb{N}_{0},\ y_{00}\leq 0$ and hence $y_{00}=0$. If (\ref{x}) holds 
$\forall \tau \in \Omega _{\theta _{j}},$ then by (\ref{xx}) and (\ref{xxx}%
), equality (\ref{x}) holds also for $\tau \in \Omega _{\theta _{j+1}}.$\end{proof}

\begin{corollary}
If $\sigma ,\tau \in \mathbf{Ord,}$ then $\sigma +\tau \in \mathbf{Ord}$ and 
$\sigma \tau \in \mathbf{Ord}$
\end{corollary}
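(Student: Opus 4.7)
The corollary is essentially a restatement of the ``in particular'' clause of Theorem~\ref{figo}, so my plan is to extract the closure assertion cleanly from the identities established there. Given $\sigma,\tau\in\mathbf{Ord}$, the theorem asserts $\sigma+\tau=\mathfrak{num}(\Omega_{\sigma+\tau})$ and $\sigma\tau=\mathfrak{num}(\Omega_{\sigma\tau})$, which by Definition~\ref{nuovi ord} is precisely what it means for these Euclidean numbers to lie in $\mathbf{Ord}$.

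For a self-contained argument I would proceed by transfinite induction on $\tau$, mirroring the structure of the proof of Theorem~\ref{figo}. In the base case $\tau=0$ one has $\sigma+0=\sigma\in\mathbf{Ord}$ and $\sigma\cdot 0=0\in\mathbf{Ord}$. In the successor step $\tau=\gamma+1$, the inductive hypothesis yields $\sigma+\gamma\in\mathbf{Ord}$, and since adding $1$ to any ordinal produces an ordinal (as noted right after Definition~\ref{nuovi ord}), one gets $\sigma+\tau=(\sigma+\gamma)+1\in\mathbf{Ord}$; for the product, distributivity in $\mathbb{E}$ gives $\sigma\tau=\sigma\gamma+\sigma$, which is an ordinal by the addition closure applied to the two ordinals $\sigma\gamma$ and $\sigma$. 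Finally, in the limit case $\tau=\sup_{k\in K}\tau_k$ with $|K|<\kappa$, the suprema identities $\sigma+\sup_{k}\tau_k=\sup_{k}(\sigma+\tau_k)$ and $\sigma\cdot\sup_{k}\tau_k=\sup_{k}(\sigma\tau_k)$ established inside the proof of Theorem~\ref{figo}, combined with the fact that the supremum of a $<\kappa$-indexed family of ordinals lies in $\mathbf{Ord}$, close the induction.

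The only delicate point is the order of the two inductions: the product's successor step relies on closure under addition, so I would first run the induction to obtain closure under $+$, then a second induction, using the first result, for closure under $\cdot$. Proposition~\ref{tsena} is not logically required for the corollary itself; its role here is complementary, confirming that every ordinal sits below some $\theta_{j+1}$ and admits a concrete normal form, which reassures us that the transfinite induction on $\tau$ exhausts all of $\mathbf{Ord}$. Beyond the careful tracking of this ordering, I do not expect any substantive obstacle.
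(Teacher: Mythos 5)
Your proposal is correct, but it takes a genuinely different route from the paper's own proof of this corollary. The paper does not re-run the transfinite induction: it invokes Proposition \ref{tsena} to write $\sigma=\sum_{k=0}^{n}a_{k}\theta_{j}^{k}$ and $\tau=\sum_{k=0}^{n}b_{k}\theta_{j}^{k}$ in a common base $\theta_{j}$ of irreducible ordinals, computes $\sigma+\tau=\sum_{k=0}^{n}(a_{k}+b_{k})\theta_{j}^{k}$ and $\sigma\tau=\sum_{h,k=0}^{n}(a_{h}b_{k})\theta_{j}^{h+k}$, and concludes via the reverse direction of Proposition \ref{tsena}, irreducibility of $\theta_{j}$ keeping the new coefficients below $\theta_{j}$ (strictly speaking this also needs closure for the coefficients, i.e.\ a hidden recursion on $j$, which the paper glosses over). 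You instead note, correctly, that the statement is already the ``in particular'' clause of Theorem \ref{figo}: for $\sigma,\tau\in\mathbf{Ord}$ the identity $\mathfrak{num}(\Omega_{\sigma})+\mathfrak{num}(\Omega_{\tau})=\mathfrak{num}(\Omega_{\sigma+\tau})$ reads $\sigma+\tau=\mathfrak{num}(\Omega_{\sigma+\tau})$, which is exactly Definition \ref{nuovi ord}; your self-contained double induction (additive closure first, since the product's successor step $\sigma(\gamma+1)=\sigma\gamma+\sigma$ consumes it) is sound and in fact mirrors what Theorem \ref{figo}'s proof already does implicitly, as its limit case explicitly uses that each $\sigma+\tau_{k}$ is an ordinal, so closure is part of that theorem's induction hypothesis. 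What each approach buys: yours is more economical and exposes the corollary as logically redundant given Theorem \ref{figo}; the paper's normal-form computation, while heavier, produces the explicit base-$\theta_{j}$ expansions of $\sigma+\tau$ and $\sigma\tau$, which is precisely the input needed for the isomorphism of Theorem \ref{bello}. One small correction to your closing remark: Proposition \ref{tsena} plays no role in guaranteeing that your induction exhausts $\mathbf{Ord}$ --- that is ensured by the fact that $(\mathbf{Ord},<)$ is well ordered, as the paper observes at the end of the preceding subsection, independently of any normal form.
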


\begin{proof} By Prop. \ref{tsena}, 
\begin{equation*}
\sigma =\sum_{k=0}^{n}a_{k}\theta _{j}^{k},\ \tau =\sum_{k=0}^{n}b_{k}\theta
_{j}^{k}
\end{equation*}%
for some $j\in \mathbf{Ord}$ and hence 
\begin{equation*}
\sigma +\tau =\sum_{k=0}^{n}\left( a_{k}+b_{k}\right) \theta _{j}^{k};\
\sigma \tau =\sum_{h,k=0}^{n}\left( a_{h}b_{k}\right) \theta _{j}^{h+k}.\qedhere
\end{equation*}

\end{proof}

Now we describe the sequence of the irreducible ordinal numerosities: we set

\begin{itemize}
\item $\theta _{0}=\bar{\omega}$

\item $\theta _{\bar{j}}=\sup \left\{ \theta _{\bar{k}}^{n}\ |\ n\in \mathbb{%
N},\ \bar{k}<\bar{j}\right\} $
\end{itemize}

\noindent So we have that 
\begin{eqnarray*}
\theta _{0} &=&\omega , \\
\theta _{1} &=&\omega ^{\omega }, \\
\theta _{2} &=&\omega ^{\omega ^{\omega }} \\
&&\dots \\
\theta _{j+1} &=&\theta _{j}^{\omega } \\
&&\dots \\
\theta _{\omega } &=&\varepsilon _{0} \\
&&\dots
\end{eqnarray*}%
and so on. Since the definiton of $\theta _{j}$ depends only on the order
structure of $\left( \mathbf{Ord,<}\right) $, then 
\begin{equation*}
\Phi \left( \theta _{j}\right) =\bar{\theta}_{j}.
\end{equation*}

It is well known and easy to check that any ordinal number ${\bar{\tau}}\in 
\mathbf{COrd,}\bar{\tau}<\bar{\theta}_{j+1},$ can be written as follows:%
\begin{equation*}
{\bar{\tau}}=\dbigoplus\limits_{n=0}^{m}\bar{a}_{n}\otimes \bar{\theta}%
_{j}^{n};\ \ \bar{a}_{n}<\bar{\theta}_{j}
\end{equation*}%
and the natural operations $\oplus ,\otimes $ take the following form:%
\begin{equation*}
\left( \dbigoplus\limits_{n=0}^{m}\bar{a}_{n}\otimes \bar{\theta}%
_{j}^{n}\right) \oplus \left( \dbigoplus\limits_{n=0}^{m}\bar{b}_{n}\otimes 
\bar{\theta}_{j}^{n}\right) =\dbigoplus\limits_{n=0}^{m}\left( \bar{a}%
_{n}\oplus \bar{b}_{n}\right) \otimes \bar{\theta}_{j}^{n}
\end{equation*}%
\begin{equation*}
\,\bar{\sigma}{\otimes \bar{\tau}}=\dbigoplus\limits_{n,h=0}^{m}\left( \bar{a%
}_{n}\otimes \bar{b}_{h}\right) \otimes \bar{\theta}_{j}^{n+h}
\end{equation*}

\begin{theorem}
\label{bello}The map (\ref{fab}) is an isomorphism between the semirings $(%
\mathbf{Ord,}+,\cdot )$ and $(\mathbf{COrd,}\oplus ,\otimes ),$ namely%
\begin{eqnarray*}
\Phi \left( \sigma +\tau \right) &=&\bar{\sigma}\oplus \bar{\tau} \\
\Phi \left( \sigma \tau \right) &=&\bar{\sigma}\otimes \bar{\tau}
\end{eqnarray*}
\end{theorem}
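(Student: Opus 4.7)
I would proceed by transfinite induction on the smallest index $j$ such that $\sigma,\tau\in\Omega_{\theta_{j+1}}$, reducing everything to the base-$\theta_j$ normal form supplied by Proposition \ref{tsena}. The base case $j=-1$ (so that $\sigma,\tau\in\mathbb{N}_0$) is immediate, since $\Phi$ restricted to $\mathbb{N}_0$ is the identity and the field, ordinary, natural, and Hessenberg operations all coincide on natural numbers.

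For the inductive step, apply Proposition \ref{tsena} to write
\[
\sigma=\sum_{k=0}^{m}a_{k}\theta_{j}^{k},\qquad \tau=\sum_{k=0}^{m}b_{k}\theta_{j}^{k},
\]
with $a_{k},b_{k}\in\Omega_{\theta_j}$, padding by zero coefficients if necessary so that the two lengths match. Because the field operations of $\mathbb{E}$ are commutative, associative, and distributive, a direct polynomial computation yields
\[
\sigma+\tau=\sum_{k=0}^{m}(a_{k}+b_{k})\theta_{j}^{k},\qquad \sigma\tau=\sum_{h,k=0}^{m}(a_{h}b_{k})\theta_{j}^{h+k},
\]
and irreducibility of $\theta_j$ ensures that the coefficients $a_{k}+b_{k}$ and $a_{h}b_{k}$ again lie in $\Omega_{\theta_j}$, so these are themselves the base-$\theta_j$ normal forms of $\sigma+\tau$ and $\sigma\tau$.

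Next, apply $\Phi$. Since $\Phi$ is an order isomorphism with $\Phi(\theta_j)=\bar\theta_j$, and by the outer inductive hypothesis $\Phi$ intertwines $+$ with $\oplus$ and $\cdot$ with $\otimes$ on $\Omega_{\theta_j}\times\Omega_{\theta_j}$, one obtains
\[
\Phi(\sigma+\tau)=\bigoplus_{k=0}^{m}(\bar a_{k}\oplus\bar b_{k})\otimes\bar\theta_{j}^{k},\qquad \Phi(\sigma\tau)=\bigoplus_{h,k=0}^{m}(\bar a_{h}\otimes\bar b_{k})\otimes\bar\theta_{j}^{h+k},
\]
which by the formulas for $\oplus$ and $\otimes$ displayed just before the theorem equal $\bar\sigma\oplus\bar\tau$ and $\bar\sigma\otimes\bar\tau$ respectively.

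The step I expect to be the main obstacle is the auxiliary identity $\Phi(\theta_{j}^{k})=\bar\theta_{j}^{k}$ for finite $k\geq 2$; this is itself a multiplicativity statement and is what lets one read off the right-hand base-$\bar\theta_j$ normal form from the left. I would dispatch it by a nested induction on $k$ inside the outer induction on $j$: because $\theta_{j}^{k}<\theta_{j+1}$ for every finite $k$, the identity $\theta_{j}^{k+1}=\theta_{j}\cdot\theta_{j}^{k}$ falls under the normal-form multiplication step already treated at the previous value of $k$, while on the Cantor side $\bar\theta_{j}^{k+1}=\bar\theta_{j}\otimes\bar\theta_{j}^{k}$ by definition, which closes the inner induction and hence the whole argument.
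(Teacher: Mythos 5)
Your algebraic reductions are correct as far as they go: the identities $\sigma+\tau=\sum_{k}(a_{k}+b_{k})\theta_{j}^{k}$ and $\sigma\tau=\sum_{h,k}(a_{h}b_{k})\theta_{j}^{h+k}$, with coefficients kept inside $\Omega_{\theta_{j}}$ by irreducibility, match the Corollary to Proposition \ref{tsena}. But the pivotal step of your plan is not established: the passage from these identities to $\Phi(\sigma+\tau)=\bigoplus_{k}\bigl(\bar{a}_{k}\oplus\bar{b}_{k}\bigr)\otimes\bar{\theta}_{j}^{k}$. You justify it by the outer inductive hypothesis, but that hypothesis intertwines $\Phi$ with the operations only for \emph{pairs of arguments lying in} $\Omega_{\theta_{j}}$, whereas the terms $a_{k}\theta_{j}^{k}$ and their partial sums are $\geq\theta_{j}$ as soon as $k\geq 1$; so neither the products $a_{k}\cdot\theta_{j}^{k}$ nor the additions across distinct powers are covered by it. What you are implicitly invoking is the statement $\Phi\bigl(\sum_{k}c_{k}\theta_{j}^{k}\bigr)=\bigoplus_{k}\bar{c}_{k}\otimes\bar{\theta}_{j}^{k}$ for $c_{k}\in\Omega_{\theta_{j}}$, i.e.\ that $\Phi$ commutes with normal forms --- and this is the real content of the theorem, not a consequence of the facts you list. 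Your nested induction for the auxiliary identity $\Phi(\theta_{j}^{k})=\bar{\theta}_{j}^{k}$ is both insufficient (it is only the one-term, unit-coefficient case of the needed statement) and circular: the ``normal-form multiplication step'' you appeal to at the previous value of $k$ is an instance of the theorem \emph{at stage $j$ itself}, whose proof, in your architecture, already presupposes the normal-form image of $\Phi$ that you are trying to derive; only stages $j'<j$ are legitimately available.

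The way to close the gap --- and it is exactly what the paper's proof does --- is order-theoretic rather than algebraic. By definition $\Phi(\tau)=ot(\Omega_{\tau})$, and for $\tau=\sum_{k}b_{k}\theta_{j}^{k}$ Proposition \ref{tsena} together with irreducibility of $\theta_{j}$ shows that $\Omega_{\tau}$ consists precisely of the numerosities $\sum_{k}a_{k}\theta_{j}^{k}$ with $a_{k}\in\Omega_{\theta_{j}}$ whose coefficient tuples precede $(b_{k})$; on the Cantor side, the ordinals below $\bigoplus_{k}\bar{b}_{k}\otimes\bar{\theta}_{j}^{k}$ admit the identical description in terms of base-$\bar{\theta}_{j}$ normal forms. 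Matching the two descriptions coefficientwise yields $ot(\Omega_{\tau})=\bigoplus_{k}\bar{b}_{k}\otimes\bar{\theta}_{j}^{k}$ directly, with no appeal to additivity or multiplicativity of $\Phi$. Once this lemma is in hand, your polynomial computations for $\sigma+\tau$ and $\sigma\tau$, compared against the displayed formulas for $\oplus$ and $\otimes$, finish the argument exactly as you describe. So your skeleton (normal forms plus coefficientwise comparison) is the right one, but the missing lemma must be obtained by computing an order type, not by the induction you propose.
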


\begin{proof} Let $\tau=\sum_{k=0}^{m}b_{k}\theta_{j}^{k}$ be an ordinal numerosity. Then
\begin{eqnarray*}
\sum\limits_{n=0}^{m}\bar{b}_{n}\otimes \bar{\theta}%
_{j}^{n}&=&ot\left( \left\{ \dbigoplus\limits_{n=0}^{m}\bar{a}_{n}\otimes \bar{\theta%
}_{j}^{n}\in \mathbf{COrd}\ |\ \dbigoplus\limits_{n=0}^{m}\bar{a}_{n}\otimes 
\bar{\theta}_{j}^{n}<\dbigoplus\limits_{n=0}^{m}\bar{b}_{n}\otimes \bar{%
\theta}_{j}^{n}\right\} \right) \\
&=&ot\left( \left\{ \sum_{n=0}^{m}a_{n}\theta _{j}^{n}\in \mathbf{Ord}\ |\
\sum_{n=0}^{m}a_{n}\theta _{j}^{n}<\sum_{n=0}^{m}b_{n}\theta
_{j}^{n}\right\} \right) =ot\left(\Omega_{\tau}\right)=\bar{\tau},
\end{eqnarray*}%
namely
\begin{equation*}
\Phi \left( \tau \right) =\Phi \left( \sum_{n=0}^{m}b_{n}\theta
_{j}^{n}\right) =\dbigoplus\limits_{n=0}^{m}\bar{b}_{n}\otimes \bar{\theta}%
_{j}^{n}=\bar{\tau}.
\end{equation*}%
Hence $\Phi$ is an isomorphism. \end{proof}

\begin{remark}
Theorems \ref{bello} and \ref{figo} provide a new interpretation for the
natural operations $\oplus \ $and $\otimes $ namely%
\begin{equation*}
\bar{\sigma}\oplus {\tau }=ot(\Omega _{\sigma +\tau })\ \ \mathrm{and}\,\,\,%
\bar{\sigma}{\otimes \bar{\tau}}=ot(\Omega _{\sigma \tau })
\end{equation*}%
This fact is somewhat surprising since the operation $+$ and $\cdot $
between numerosities have been introduced in a natural way for the
numerosity theory and, \emph{a priori}, they should not have any realation
with the natural operations between ordinal numbers.
\end{remark}

Notice, however, that not all operations are the same between numerosity
ordinals and Cantor ordinals: for example, let $\bar{\varepsilon}_{0}=\bar{%
\theta}_{\bar{\omega}}$ be the Cantor ordinal that corresponds to the
numerosity ordinal $\theta _{\omega }$. If we use the ordinal
exponentiation, we have that%
\begin{equation*}
\bar{\omega}^{\bar{\varepsilon}_{0}}=\bar{\varepsilon}_{0},
\end{equation*}%
whilst on the contrary, if we use the Euclidean exponentiation, we get that%
\begin{equation*}
\omega ^{\varepsilon _{0}}>\theta _{\omega }.
\end{equation*}%
In particular the equation 
\begin{equation*}
\bar{\omega}^{x}=\bar{\varepsilon}_{0}
\end{equation*}%
in the world of Cantor ordinals has the solution $x=\bar{\varepsilon}_{0}$
while the equation%
\begin{equation*}
\omega ^{x}=\varepsilon _{0}
\end{equation*}%
in the world of Euclidean numbers, has the solution $\xi =\log _{\omega
}\varepsilon _{0}.$ $\xi $ is a well defined Euclidean number, but it is not
an ordinal number since 
\begin{equation*}
\xi <\varepsilon _{0}=\mathfrak{num}\left( \dbigcup\limits_{k<\omega }\Omega
_{\theta _{k}}\right) .
\end{equation*}%
However it is easy to prove that the ordinal exponentiation agrees with the
Euclidean exponentiation for numbers in $\Omega _{\theta _{\omega }}$.

\section{Numerosities of some denumerable sets\label{NUOVA}}

There are many different ways of definining a label set according to
Definition \ref{LB}. Different label sets might give different algebraical
properties to the numerosity; moreover, in some cases particular choices of
the label sets may lead to other concepts (e.g. Lebesgue measure for the
reals). In this and the next Sections, we want to show several examples of
these facts.

\subsection{The general strategy}

Theorem \ref{1} describes the fundamental properties of numerosities, which
are satisfied for all choices of the label set $\mathfrak{B}$ (and of the
ultrafilter $\mathcal{U}$). However, certain additional properties are
satisfied only for some choices of $\mathfrak{B}$: in fact, they depend on
the ultrafilter $\mathcal{U}$ over $\mathbf{\wp }_{fin}\left( \mathfrak{B}%
\right) $, whose existence depends on Zorn's lemma which cannot be explicit
and hence it is impossible to prove or disprove some of them. However, if we
choose a suitable label set $\mathfrak{B}$ (and, consequently we restrict
the choice of $\mathcal{U}$), it is possible to show that some properties,
as the ones mentioned in the Introduction, are satisfied independently of $%
\mathcal{U}$. The goal of Section \ref{NUOVA} is to show how a suitable
choice of $\mathfrak{B}$ allows the numerosity function to satisfy
interesting properties in many specific cases.

The smaller the set $\mathfrak{B}$ is, the more properties are satisfied by
the numerosity function. So the idea is to begin with a set $\mathfrak{B}%
_{\max }=\left\{ \mathfrak{t}\in \mathfrak{L\ }|\ \mathfrak{t}\cap \mathfrak{%
L}=\varnothing \right\} $ and to construct smaller label sets $\mathfrak{B}%
_{\max }\supset \mathfrak{B}_{1}\supset \mathfrak{B}_{2}\supset \dots $
which provide a richer and richer structure to the theory. In this paper we
are interested in the numerosity of some specific subsets of $\mathbb{N}_{0}$%
, $\mathbb{Z}$, $\mathbb{Q}$, $\mathbb{R}$ and so we will construct set of
labels $\mathfrak{B}_{\max }\supset \mathfrak{B}\left( \mathbb{N}_{0}\right)
\supset \dots \supset \mathfrak{B}\left( \mathbb{R}\right) $. Each set of
labels allows to enrich the theory with new theorems; all these theorems are
independent of the ultrafilter employed in the sense that every ultrafilter
which satisfies the finess property\footnote{%
The finess property has been introduced in the proof of the existence of the
field of Euclidean numbers.} does the job\footnote{%
Of course a smaller set of labels reduces the choice of the ultrafilter.
More precisely if $\mathfrak{B}_{1}\supset \mathfrak{B}_{2}$, an ultrafilter
constructed over $\mathfrak{B}_{2}$ makes $\mathfrak{B}_{1}$ to be a
qualified set.}.

The construction which we will present in the next sections is based on the
following definition:

\begin{definition}
\label{cu}If $\mathfrak{D}\subset \mathfrak{B}_{\max }$is a directed set
(wih respect to $\subseteq $), we define 
\begin{equation*}
\overline{\mathfrak{D}}=\mathfrak{G}\left(\{ \mathfrak{s\in B}_{\max }\mid
\exists \mathfrak{t}\in \mathfrak{D},\ \mathfrak{s}\sqsupseteq \mathfrak{t}%
\}\right),
\end{equation*}%
where $\mathfrak{G}F$ denotes the smallest lattice containing $F.$
\end{definition}

Notice that, by Definition, if $\mathfrak{D}\subset \mathfrak{B}_{\max}$ is
a directed set then

\begin{equation*}
\overline{\overline{\mathfrak{D}}}=\overline{\mathfrak{D}}.
\end{equation*}

\begin{lemma}
\label{pietro}For every $\mathfrak{D}\subset \mathfrak{B}_{\max },\ 
\overline{\mathfrak{D}}$ is a label set.
\end{lemma}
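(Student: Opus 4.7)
The plan is to verify directly that $\overline{\mathfrak{D}}$ satisfies the three clauses (i)--(iii) of Definition \ref{LB}.

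First, I would observe that clause (i), closure under $\cap$ and $\cup$, is automatic from the very definition of $\overline{\mathfrak{D}}$ as the smallest lattice $\mathfrak{G}(\cdot)$ generated by $\{\mathfrak{s}\in\mathfrak{B}_{\max}\mid \exists\mathfrak{t}\in\mathfrak{D},\ \mathfrak{s}\sqsupseteq\mathfrak{t}\}$. This needs no further argument.

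For clause (ii), I would observe that the generating family is a subset of $\mathfrak{B}_{\max}$, so each of its elements $\mathfrak{s}$ satisfies $\mathfrak{s}\cap\mathfrak{L}=\varnothing$. This property is preserved by binary unions and intersections: if $\mathfrak{s}\cap\mathfrak{L}=\varnothing$ and $\mathfrak{t}\cap\mathfrak{L}=\varnothing$, then both $(\mathfrak{s}\cup\mathfrak{t})\cap\mathfrak{L}$ and $(\mathfrak{s}\cap\mathfrak{t})\cap\mathfrak{L}$ are empty. By induction on the lattice construction, every element of $\overline{\mathfrak{D}}$ still lies in $\mathfrak{B}_{\max}$, giving (ii).

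The key step is clause (iii): showing $\bigcup_{\mathfrak{s}\in\overline{\mathfrak{D}}}\mathbb{V}(\mathfrak{s})=\Lambda$. Fix $a\in\Lambda$. Since $\mathfrak{B}_{\max}$ is itself a label set (as remarked after formula (\ref{bmax})), there exists $\mathfrak{s}_{0}\in\mathfrak{B}_{\max}$ with $a\in\mathbb{V}(\mathfrak{s}_{0})$. Now pick any $\mathfrak{t}\in\mathfrak{D}$ (nonempty by assumption, otherwise the statement is vacuous or must be interpreted with a trivial convention) and form $\mathfrak{s}:=\mathfrak{s}_{0}\cup\mathfrak{t}$. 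Then $\mathfrak{s}\in\mathfrak{L}$, and by the argument for clause (ii), $\mathfrak{s}\cap\mathfrak{L}=\varnothing$, so $\mathfrak{s}\in\mathfrak{B}_{\max}$. Furthermore $\mathfrak{s}\supseteq\mathfrak{t}$, hence $\mathfrak{s}\sqsupseteq\mathfrak{t}$ by the compatibility of $\sqsubseteq$ with $\subseteq$. Therefore $\mathfrak{s}$ belongs to the generating family of $\overline{\mathfrak{D}}$, hence to $\overline{\mathfrak{D}}$ itself. Finally, from $\mathfrak{s}_{0}\subseteq\mathfrak{s}$ one gets $\mathbb{V}(\mathfrak{s}_{0})\subseteq\mathbb{V}(\mathfrak{s})$, whence $a\in\mathbb{V}(\mathfrak{s})$ as required.

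I do not expect a real obstacle here; the only mildly delicate point is clause (iii), where one must recognize that directedness of $\mathfrak{D}$ is not actually needed for the covering property: the trick of enlarging $\mathfrak{s}_{0}$ by a single $\mathfrak{t}\in\mathfrak{D}$ simultaneously places the resulting label above $\mathfrak{t}$ in the $\sqsubseteq$-order and keeps $a$ in its superstructure. (Directedness of $\mathfrak{D}$ will matter elsewhere, e.g. to ensure that the ultrafilter construction on $\overline{\mathfrak{D}}$ behaves well, but it is not required for $\overline{\mathfrak{D}}$ to be a label set.)
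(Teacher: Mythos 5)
Your proof is correct and follows essentially the same route as the paper's: clause (i) holds by definition of the generated lattice, clause (ii) because everything stays inside $\mathfrak{B}_{\max}$, and clause (iii) by enlarging a covering label $\mathfrak{s}_{0}\in\mathfrak{B}_{\max}$ with some $\mathfrak{t}\in\mathfrak{D}$ so that $\mathfrak{s}_{0}\cup\mathfrak{t}$ lies in the generating family (being above $\mathfrak{t}$ since $\sqsubseteq$ extends $\subseteq$) while still containing $a$ in its superstructure. Your extra touches --- the explicit induction showing the generated lattice remains in $\mathfrak{B}_{\max}$, and the remark that directedness of $\mathfrak{D}$ is not needed here (only nonemptiness, which the paper's proof also uses implicitly) --- merely spell out steps the paper leaves tacit.
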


\begin{proof} Let us check that $\overline{\mathfrak{D}}$ satisfies the
properties of Definition \ref{LB}.

Property \ref{LB}.(i) holds as $\overline{\mathfrak{D}}$ is a lattice by definition.

Property \ref{LB}.(ii) holds as $\overline{\mathfrak{D}}\subseteq \mathfrak{B}_{\max}$.

Property \ref{LB}.(iii) holds as $\forall a\in \Lambda ,\exists 
\mathfrak{s}\in \mathfrak{B}_{\max },\ a\in \mathbb{V}\left( \mathfrak{s}%
\right) $ and hence, if you take any $\mathfrak{t}\in \mathfrak{D,}$ $a\in 
\mathbb{V}\left( \mathfrak{s}\cup \mathfrak{t}\right) ;$ on the other hand $%
\mathfrak{s}\cup \mathfrak{t\in }\overline{\mathfrak{D}}\mathfrak{\ }$and so 
$\bigcup_{\mathfrak{s}\in \overline{\mathfrak{D}}}\mathbb{V}\left( \mathfrak{%
s}\right) =\Lambda$.\end{proof}

The numerosity of a set depends on the set of labels $\mathfrak{B}$ and an
ultrafilter $\mathcal{U}$ consistent with $\mathfrak{B.}$ As in this section
we will discuss also coherence properties between different label sets, we
will use the notation $\mathfrak{num}_{\mathfrak{B}}^{\mathcal{U}}$ to
denote the numerosity function obtained using labels in $\mathfrak{B}$ and
the ultrafilter $\mathcal{U}$ and similarly we denote by $\ell _{\mathfrak{B}%
}(x)$ the label relative to $\mathfrak{B}$ (see Definition \ref{labling}).
This notation will be used only when there is danger of confusion, as
multiple sets of labels are used at once. We will keep to use to the simpler
notation $\mathfrak{num}$ whenever there is no danger of such confusion.

By definition, the $\overline{\mathfrak{D}}$-labelling of $a\in\Lambda$ is
given by 
\begin{equation*}
\ell _{\overline{\mathfrak{D}}}(a)=\bigcap \{\mathfrak{s}\in \overline{%
\mathfrak{D}}\mid a\in \mathbb{V}\left( \mathfrak{s}\right)\}=\bigcap \{%
\mathfrak{s}\in \mathfrak{B}_{\max}\mid a\in \mathbb{V}\left( \mathfrak{s}%
\right) \ \text{and} \ \exists \mathfrak{t}\in \mathfrak{D},\ \mathfrak{s}%
\supseteq \mathfrak{t}\};  \label{annaN}
\end{equation*}
in particular, we have that

\begin{equation}
\mathfrak{s}\in \mathfrak{D}\Rightarrow \ell _{\overline{\mathfrak{D}}}(%
\mathfrak{s})=\mathfrak{s}.  \label{annaB}
\end{equation}

\begin{proposition}
\label{coh} If $\mathfrak{D}_{1}\subset \mathfrak{D}_{2}$ then $\overline{%
\mathfrak{D}}_{1}\subset \overline{\mathfrak{D}}_{2}$, hence for all
ultrafilter $\mathcal{U}$ consistent with $\overline{\mathfrak{D}}_{1}$, for
every set $A$ in $\Lambda$ 
\begin{equation*}
\mathfrak{num}_{\overline{\mathfrak{D}}_{1}}^{\mathcal{U}}(A)=\mathfrak{num}%
_{\overline{\mathfrak{D}}_{2}}^{\mathcal{U}}(A).
\end{equation*}
\end{proposition}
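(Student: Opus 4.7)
The plan is to split the statement into its two parts. For the inclusion $\overline{\mathfrak{D}}_{1}\subset \overline{\mathfrak{D}}_{2}$, I would simply unwind Definition \ref{cu}. Writing $F_{i}:=\{\mathfrak{s}\in \mathfrak{B}_{\max }\mid \exists \mathfrak{t}\in \mathfrak{D}_{i},\ \mathfrak{s}\sqsupseteq \mathfrak{t}\}$, the hypothesis $\mathfrak{D}_{1}\subset \mathfrak{D}_{2}$ immediately gives $F_{1}\subseteq F_{2}$; then $\overline{\mathfrak{D}}_{2}=\mathfrak{G}(F_{2})$ is in particular a lattice containing $F_{1}$, so it must contain the smallest such lattice $\mathfrak{G}(F_{1})=\overline{\mathfrak{D}}_{1}$.

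The heart of the statement is the equality of the two numerosities, and my strategy here is to observe that, keeping $\mathcal{U}$ fixed, the defining formula for the numerosity does not actually depend on the chosen label set. By Definition \ref{NT} each $\mathfrak{num}_{\overline{\mathfrak{D}}_{i}}^{\mathcal{U}}(A)$ is the $\Lambda$-limit $J(\varphi _{A})$ of the single net $\varphi _{A}(\lambda )=|A\cap \lambda |$, and the explicit construction of $\mathbb{E}$ carried out just after Definition \ref{DR} defines $J$ as $\varphi \mapsto \Phi ([\varphi ]_{\mathcal{U}})$: both the equivalence relation $\approx _{\mathcal{U}}$ and the injection $\Phi $ depend only on $\mathcal{U}$. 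The order $\sqsubseteq _{i}$ associated with $\overline{\mathfrak{D}}_{i}$ enters the picture only through the fineness requirement imposed on $\mathcal{U}$ to secure the monotonicity axiom; it plays no role in the actual value of $J$ on a given net. Consequently both numerosities coincide with the common element $\Phi ([\varphi _{A}]_{\mathcal{U}})\in \mathbb{E}$, which is what I would record as the desired equality.

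The one step that deserves a careful check -- and which I expect to be the one mild obstacle -- is that the hypothesis ``$\mathcal{U}$ consistent with $\overline{\mathfrak{D}}_{1}$'' is enough for $\mathcal{U}$ to serve at the same time as the ultrafilter underlying $\mathfrak{num}_{\overline{\mathfrak{D}}_{2}}^{\mathcal{U}}$ in a well-posed way. This is the propagation of fineness already emphasized in the general strategy section: the inclusion $\mathfrak{L}_{0}(\overline{\mathfrak{D}}_{1})\subset \mathfrak{L}_{0}(\overline{\mathfrak{D}}_{2})$ induces a comparison between the corresponding cones $Q_{i}[\lambda ]$ used to formulate fineness, and the closure of $\mathcal{U}$ under supersets transports the fineness condition between the two orders. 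Once this mild verification is in place, the desired equality follows at once from the shared class $[\varphi _{A}]_{\mathcal{U}}$.
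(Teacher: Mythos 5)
Your handling of the first two ingredients is correct and matches the paper. The inclusion $\overline{\mathfrak{D}}_{1}\subset \overline{\mathfrak{D}}_{2}$ via $F_{1}\subseteq F_{2}$ and minimality of the generated lattice is exactly what the paper dismisses as ``trivial from Definition \ref{cu}''; and your central observation --- that once a single $\mathcal{U}$ is admissible for both constructions, the two numerosities are literally the same element $\Phi \left( \left[ \varphi _{A}\right] _{\mathcal{U}}\right) $, since the net $\lambda \mapsto \left\vert A\cap \lambda \right\vert $ and the map $J$ depend only on $\mathcal{U}$ --- is precisely the (implicit) content of the paper's one-line proof.

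The gap is in the step you flagged as a ``mild verification'', and it is not mild: your proposed transfer of fineness via the cone comparison runs in the wrong direction. Since $\mathfrak{L}_{0}(\overline{\mathfrak{D}}_{1})\subseteq \mathfrak{L}_{0}(\overline{\mathfrak{D}}_{2})$, for fixed $\mu $ the intersection $\bigcap \left\{ \tau \in \mathfrak{L}_{0}(\overline{\mathfrak{D}}_{i})\mid \mu \subseteq \tau \right\} $ is \emph{smaller} for $i=2$ (one intersects over a larger family), so $\lambda \sqsubseteq _{2}\mu \Rightarrow \lambda \sqsubseteq _{1}\mu $: the order induced by the larger label set is the more restrictive one, and hence $Q_{2}\left[ \lambda \right] \subseteq Q_{1}\left[ \lambda \right] $. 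Upward closure of $\mathcal{U}$ therefore transports fineness from the $\overline{\mathfrak{D}}_{2}$-order to the $\overline{\mathfrak{D}}_{1}$-order, which is the opposite of what you need: knowing $Q_{1}\left[ \lambda \right] \in \mathcal{U}$ for all $\lambda $ gives no control over the smaller sets $Q_{2}\left[ \lambda \right] $, and in general a fine-for-$\sqsubseteq _{1}$ ultrafilter can avoid some $Q_{2}\left[ \lambda \right] $ altogether. The paper's argument does not compare cones at all: ``$\mathcal{U}$ consistent with $\overline{\mathfrak{D}}_{1}$'' is read as the statement that $\overline{\mathfrak{D}}_{1}$ (equivalently, the family it generates) is a \emph{qualified set}, i.e.\ an element of $\mathcal{U}$; then $\overline{\mathfrak{D}}_{1}\subset \overline{\mathfrak{D}}_{2}$ together with the upward closure of the filter, applied to the label sets themselves rather than to the cones, immediately yields that $\overline{\mathfrak{D}}_{2}$ is qualified too, so the same $\mathcal{U}$ serves both theories. (This is also what the footnote in the ``general strategy'' subsection asserts: an ultrafilter constructed over the smaller label set makes the larger one a qualified set.) With consistency understood as membership, your final ``same class $\left[ \varphi _{A}\right] _{\mathcal{U}}$'' argument closes the proof; with consistency understood as fineness for the induced order, as written in your text, the transfer step fails and would need to be replaced by the membership argument above.
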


\begin{proof} The inclusion $\overline{\mathfrak{D}}_{1}\subset \overline{\mathfrak{D}}_{2}$ holds trivially from Definition \ref{cu}. The consistency is immediate as if $\mathcal{U}$ contains $\overline{\mathfrak{D}}_{1}$ and $\overline{\mathfrak{D}}_{1}\subset\overline{\mathfrak{D}}_{2}$ then necessarily $\mathcal{U}$ contains $\overline{\mathfrak{D}}_{2}$. \end{proof}

\begin{lemma}
\label{B1}If $\lambda \in \overline{\mathfrak{D}},$ then $\lambda $ can be
split as follows 
\begin{equation*}
\lambda =\mathfrak{s}\cup \mathfrak{t}
\end{equation*}%
where $\mathfrak{s}\in \mathfrak{D}$ and $\mathfrak{t}$ is such that%
\begin{equation*}
\forall \sigma \in \mathfrak{D},\ \mathfrak{t}\cap \sigma =\varnothing.
\end{equation*}
\end{lemma}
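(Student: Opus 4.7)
The plan is to decompose $\lambda$ explicitly by separating its elements according to whether they belong to some element of $\mathfrak{D}$. I would set
\[
\mathfrak{t} = \lambda \setminus \bigcup_{\sigma \in \mathfrak{D}} \sigma
\qquad\text{and}\qquad
\mathfrak{s} = \lambda \setminus \mathfrak{t} = \lambda \cap \bigcup_{\sigma \in \mathfrak{D}} \sigma.
\]
By construction, $\lambda = \mathfrak{s} \cup \mathfrak{t}$ as a disjoint union, and $\mathfrak{t} \cap \sigma = \varnothing$ for every $\sigma \in \mathfrak{D}$. Thus the genuine content of the lemma is the claim $\mathfrak{s} \in \mathfrak{D}$.

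For that, I would first exploit finiteness: $\mathfrak{s}$ is a subset of the finite set $\lambda \in \mathfrak{L}$, hence finite, and by definition each $x \in \mathfrak{s}$ lies in some $\sigma_{x} \in \mathfrak{D}$. Iterating the directedness of $(\mathfrak{D},\subseteq)$ over the finite family $\{\sigma_{x}\}_{x \in \mathfrak{s}}$ would produce $\sigma^{*} \in \mathfrak{D}$ with $\sigma_{x} \subseteq \sigma^{*}$ for every $x$, so $\mathfrak{s} \subseteq \sigma^{*}$. Combined with the automatic reverse inclusion $\sigma^{*} \cap \lambda \subseteq \bigcup_{\sigma \in \mathfrak{D}} \sigma \cap \lambda = \mathfrak{s}$, this yields the identification $\mathfrak{s} = \sigma^{*} \cap \lambda$.

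To upgrade this identity to $\mathfrak{s} \in \mathfrak{D}$, I would induct on the lattice-theoretic construction of $\lambda$ inside $\overline{\mathfrak{D}} = \mathfrak{G}(G)$, where $G = \{\mathfrak{r} \in \mathfrak{B}_{\max} \mid \exists \mathfrak{u} \in \mathfrak{D},\ \mathfrak{r} \supseteq \mathfrak{u}\}$. For the base case $\lambda \in G$, the set $\{\sigma \in \mathfrak{D} \mid \sigma \subseteq \lambda\}$ is a finite nonempty collection of subsets of $\lambda$, so it admits a maximal element $\mathfrak{s}$; directedness then forces every $x \in \lambda \setminus \mathfrak{s}$ to lie outside every $\sigma \in \mathfrak{D}$, since otherwise a joint witness produced by directedness would contradict maximality. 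The $\cup$-step of the induction is routine, combining the two $\mathfrak{D}$-parts via directedness. The main obstacle will be the $\cap$-step: given $\lambda = \lambda_{1} \cap \lambda_{2}$ with inductive decompositions $\lambda_{i} = \mathfrak{s}_{i} \cup \mathfrak{t}_{i}$, one must verify that the candidate $\mathfrak{D}$-part of $\lambda$ is still in $\mathfrak{D}$ and that the residual part is disjoint from all of $\mathfrak{D}$. The tool here is the identification $\mathfrak{s} = \sigma^{*} \cap \lambda$ from the previous paragraph, applied simultaneously to both factors through a single $\sigma^{*} \in \mathfrak{D}$ produced by directedness from witnesses $\sigma^{*}_{1}, \sigma^{*}_{2}$ associated with $\lambda_{1}, \lambda_{2}$.
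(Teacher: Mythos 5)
Your opening reduction is correct, and in fact sharper than what the paper records: since any admissible $\mathfrak{s}$ must both contain and be contained in the trace $\lambda \cap \bigcup_{\sigma \in \mathfrak{D}}\sigma$, the lemma is equivalent to the single claim that this trace lies in $\mathfrak{D}$ (or is empty), and your identification $\mathfrak{s}=\sigma^{*}\cap\lambda$ is valid. The genuine gap is in the induction, already at the base case. A maximal element of the finite family $\{\sigma \in \mathfrak{D} \mid \sigma \subseteq \lambda\}$ need not be a greatest element, and the joint witness that directedness produces from $\mathfrak{s}$ and a $\sigma$ containing $x$ need not be a subset of $\lambda$; it therefore lies outside the family in which $\mathfrak{s}$ is maximal, and no contradiction with maximality arises. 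The step is not repairable, because the base-case claim itself fails under the literal reading of Definition \ref{cu}: for atoms $a,b,c$ the family $\mathfrak{D}=\{\{a\},\{b\},\{a,b,c\}\}$ is directed and $\lambda=\{a,b\}$ is a generator (it contains $\{a\}$), yet $\lambda\cap\bigcup_{\sigma\in\mathfrak{D}}\sigma=\{a,b\}\notin\mathfrak{D}$, and for either maximal choice $\mathfrak{s}=\{a\}$ or $\mathfrak{s}=\{b\}$ the leftover point belongs to an element of $\mathfrak{D}$. Chains do not save you either: for the set $\mathfrak{D}(\mathbb{N}_{0})$ used later in the paper, $\lambda=\{0,1,17\}$ is a generator since $\{0,\dots,1!^{1!}\}=\{0,1\}\subseteq\lambda$, but $17\in\{0,\dots,3!^{3!}\}\in\mathfrak{D}(\mathbb{N}_{0})$, so no residual $\mathfrak{t}$ can avoid all of $\mathfrak{D}$. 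The same disease infects both inductive steps: the $\cup$-step needs $\mathfrak{s}_{1}\cup\mathfrak{s}_{2}\in\mathfrak{D}$ and the $\cap$-step needs $\mathfrak{s}_{1}\cap\mathfrak{s}_{2}\in\mathfrak{D}$, and directedness supplies neither --- your $\sigma^{*}$ only exhibits $\mathfrak{s}$ as the trace on $\lambda$ of a member of $\mathfrak{D}$, and traces need not be members. So the $\cup$-step is not ``routine,'' and the $\cap$-step you defer is not closable by the tool you name.

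For comparison, the paper's own proof takes the smaller candidate $\mathfrak{s}:=\bigcup\{\mathfrak{u}\in\mathfrak{D}\mid\mathfrak{u}\subseteq\lambda\}$ and $\mathfrak{t}:=\lambda\setminus\mathfrak{s}$; it then argues only that $\mathfrak{s}\in\overline{\mathfrak{D}}$ (not $\mathfrak{s}\in\mathfrak{D}$, as the statement requires), and it derives $\mathfrak{t}\cap\sigma=\varnothing$ from the assertion $\mathfrak{s}\supseteq\sigma$, which is justified only for $\sigma\subseteq\lambda$. In other words, the paper silently uses that every $\sigma\in\mathfrak{D}$ meeting $\lambda$ is contained in $\lambda$ --- precisely the property your induction was meant to establish, and which the examples above refute. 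You have therefore correctly isolated, but cannot close, a gap that sits in the lemma as stated: it becomes true (and, by your first paragraph, nearly tautological) only if $\overline{\mathfrak{D}}$ is read as generated by those $\mathfrak{r}\in\mathfrak{B}_{\max}$ whose trace $\mathfrak{r}\cap\bigcup_{\sigma\in\mathfrak{D}}\sigma$ already lies in $\mathfrak{D}\cup\{\varnothing\}$; when $\mathfrak{D}$ is a chain, as in the paper's applications, that family is itself a lattice, which is the reading the later computations of labels (e.g.\ $\ell(n)=\{0,\dots,f(n)\}$ for $\mathfrak{B}(\mathbb{N}_{0})$) presuppose. As written, your base case and both lattice steps fail.
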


\begin{proof} Given $\lambda \in \overline{\mathfrak{D}},$ we set%
\begin{equation*}
\mathfrak{s}:=\dbigcup \{\mathfrak{u}\in \mathfrak{D}\mid \mathfrak{u}%
\subset \lambda \}
\end{equation*}%
and%
\begin{equation*}
\mathfrak{t}:=\lambda \backslash \mathfrak{s}.
\end{equation*}%
Then $\mathfrak{s}\in\overline{\mathfrak{D}}$, as $\overline{\mathfrak{D}}$ is a lattice and the union defining $\mathfrak{s}$ is finite, and $\forall \sigma \in \mathfrak{D},\ \mathfrak{t}\cap
\sigma =\varnothing.$ In fact, if we set $\mathfrak{u}=\mathfrak{t}\cap \sigma$ then, as $\mathfrak{s}\supseteq 
\sigma$, we have
\begin{equation*}
\varnothing =\mathfrak{s}\cap \mathfrak{t}\supseteq \sigma%
\cap \mathfrak{t}=\mathfrak{u},
\end{equation*}
hence $\mathfrak{u}=\varnothing$.\end{proof}

\begin{remark}
\label{parra}We can look at the splitting given by Lemma \ref{B1} thinking
of $\mathfrak{B\ }$as a vector space over $\mathbb{Z}_{2};$ in this case we
can write 
\begin{equation*}
\mathfrak{B}=\mathfrak{D}\oplus \mathfrak{D}^{\perp }
\end{equation*}%
and the splitting $\lambda =\mathfrak{s}\cup \mathfrak{t\ }$implies that 
\begin{equation*}
\mathfrak{s\in D\ }\text{and\ }\mathfrak{t}\in \mathfrak{D}^{\perp }.
\end{equation*}
\end{remark}

\subsection{Numerosity of the natural numbers}

In what follows, we set $\mathbb{N}:=\{1,2,3,\dots \}$ and $\mathbb{N}_{0}:=%
\mathbb{N}\cup \{0\}$, and we let $\alpha $ denote the numerosity of $%
\mathbb{N}$. We will consider numbers in $\mathbb{N}_{0}$, as well as more
generally in $\mathbb{R}$, as atoms. Our goal is to find a label set $%
\mathfrak{B}(\mathbb{N}_{0})\subset \mathfrak{B}_{\max }$ so that we can
prove some properties of $\alpha $ and to describe the numerosity of some
subsets of $\mathbb{N}_{0}$ by functions of $\alpha $.

We define $\mathfrak{D}\left(\mathbb{N}_{0}\right)$ as follows: 
\begin{equation*}
\lambda \in \mathfrak{D}(\mathbb{N}_{0})\Leftrightarrow \exists m\in \mathbb{%
N}\ \text{such that}\ \lambda =\left\{ 0,\dots ,m!^{m!}\right\},
\end{equation*}%
and we set 
\begin{equation*}
\mathfrak{B}(\mathbb{N}_{0}):=\overline{\mathfrak{D}(\mathbb{N}_{0})}.
\end{equation*}%
which, by Lemma \ref{pietro}, is a label set. By Definition, we have that
for every $n\in \mathbb{N}_{0} $, 
\begin{equation*}
\ell (n)=\left\{ 0,\dots ,f(n)\right\} ,
\end{equation*}%
where 
\begin{equation*}
f(n):=\min \left\{ m!^{m!}\ |\ m\in \mathbb{N},\ m!^{m!}\geq n\right\} .
\end{equation*}

The main reason for such a peculiar labelling is to ensure the following
algebraical properties for $\alpha $:

\begin{proposition}
\label{basic} Let $n\in \mathbb{N}$. Then

\begin{enumerate}
\item[$(i)$] $\mathfrak{num}(\left\{ nm\ |\ m\in \mathbb{N}\right\} )=\frac{%
\alpha }{n}$;

\item[$(ii)$] $\mathfrak{num}(\left\{ m^{n}\ |\ m\in \mathbb{N}\right\}
)=\alpha ^{\frac{1}{n}}$.
\end{enumerate}
\end{proposition}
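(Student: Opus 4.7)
The plan is to exploit the totally ordered structure of $\mathfrak{D}(\mathbb{N}_0)$ to pin down the labels of natural numbers, then apply the labels principle (Theorem \ref{1}.(iii)) to reduce both identities to an elementary cardinality count along a cofinal family of $\lambda$'s.

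First I would identify $\ell(n)$ for $n \in \mathbb{N}_0$. Since $\mathfrak{D}(\mathbb{N}_0)$ is totally ordered by inclusion, Lemma \ref{B1} shows that every $\mathfrak{s} \in \mathfrak{B}(\mathbb{N}_0)$ splits as $\mathfrak{s} = \mathfrak{u} \cup \mathfrak{t}$ with $\mathfrak{u}$ either empty or an element of $\mathfrak{D}(\mathbb{N}_0)$, and with $\mathfrak{t}$ disjoint from every set of the form $\{0,\dots,m!^{m!}\}$, hence containing no natural number. Consequently the smallest element of $\mathfrak{B}(\mathbb{N}_0)$ containing a given $n$ is the smallest element of $\mathfrak{D}(\mathbb{N}_0)$ containing it, so $\ell(n) = \{0,\dots,f(n)\}$. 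For arbitrary $\lambda \in \mathfrak{L}$, write $\ell(\lambda) = \mathfrak{u}(\lambda) \cup \mathfrak{t}(\lambda)$ as above and let $M(\lambda)$ be the integer such that $\mathfrak{u}(\lambda) = \{0,\dots,M(\lambda)!^{M(\lambda)!}\}$ (with the convention $M(\lambda) = 0$ if $\mathfrak{u}(\lambda) = \emptyset$). Since the atoms of $\mathbb{V}(\ell(\lambda))$ from $\mathbb{N}_0$ are exactly those lying in $\mathfrak{u}(\lambda)$, Proposition \ref{pro}.(x) gives
\begin{equation*}
\mathbb{N}_\lambda = \{k \in \mathbb{N} \mid k \leq M(\lambda)!^{M(\lambda)!}\}, \qquad (A_n)_\lambda = \{nk \in \mathbb{N} \mid nk \leq M(\lambda)!^{M(\lambda)!}\},
\end{equation*}
and similarly $(B_n)_\lambda = \{k^n \mid k^n \leq M(\lambda)!^{M(\lambda)!}\}$, where $A_n = \{nm : m \in \mathbb{N}\}$ and $B_n = \{m^n : m \in \mathbb{N}\}$.

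For $M(\lambda) \geq n$ the divisibility $n \mid M(\lambda)!$ yields $n \mid M(\lambda)!^{M(\lambda)!}$, so $|(A_n)_\lambda| = M(\lambda)!^{M(\lambda)!}/n = |\mathbb{N}_\lambda|/n$; and the identity $M(\lambda)!^{M(\lambda)!} = \bigl((M(\lambda)!)^{M(\lambda)!/n}\bigr)^n$ gives $|(B_n)_\lambda| = (M(\lambda)!)^{M(\lambda)!/n}$. By fineness of $\mathcal{U}$, the set $Q[\mu_0]$ with $\mu_0 := \{0,\dots,n!^{n!}\}$ lies in $\mathcal{U}$, and for $\lambda \sqsupseteq \mu_0$ one has $\ell(\lambda) \supseteq \ell(\mu_0) = \mu_0$, whence $M(\lambda) \geq n$. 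Applying the ring-homomorphism property of the $\Lambda$-limit then yields $n \cdot \mathfrak{num}(A_n) = \alpha$ and $\mathfrak{num}(B_n)^n = \alpha$, which are (i) and (ii) respectively (the latter with $\alpha^{1/n}$ understood as the positive $n$-th root).

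The main obstacle is the label bookkeeping in the first paragraph: once one verifies that the natural numbers cannot ``hide'' in the orthogonal part $\mathfrak{t}(\lambda)$ and that $\lambda \sqsupseteq \mu_0$ translates to $\ell(\lambda) \supseteq \mu_0$, the remaining argument is just the observation that $n!^{n!}$ was chosen precisely so that $n$ divides, and $n$-th roots are exact on, $M!^{M!}$ for all $M \geq n$.
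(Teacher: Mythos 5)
Your proposal is correct and takes essentially the same route as the paper: both arguments use the structure of $\overline{\mathfrak{D}(\mathbb{N}_{0})}$ (via the splitting of Lemma \ref{B1}) together with the labels principle to reduce everything to counting $E\cap \lambda $ when $\lambda \cap \mathbb{N}=\{1,\dots ,M!^{M!}\}$ with $M\geq n$ on a qualified set, then conclude by taking $\Lambda $-limits and using that $n$ divides $M!$ (hence $M!^{M!}$ is both divisible by $n$ and an exact $n$-th power). The only deviation is cosmetic: in (i) you count the multiples of $n$ directly, obtaining $|(A_{n})_{\lambda }|=M!^{M!}/n$, whereas the paper equipartitions $\mathbb{N}_{0}$ into the $n$ residue classes mod $n$ and sums numerosities --- your direct count is, if anything, slightly cleaner, since it avoids the paper's harmless off-by-one in the residue class of $0$ (which contains $0$ and one extra element in each $\lambda $).
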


\begin{proof} (i) For $i=0,\dots ,n-1$ let 
\begin{equation*}
A_{i}=\{m\in \mathbb{N}_{0}\mid m\equiv i\mod n\}.
\end{equation*}%
Then for every $\lambda \sqsupseteq \{0,1,\dots ,n!^{n!}\}$, with $\lambda
\in \mathfrak{B}\left( \mathbb{N}_{0}\right) $, for every $0\leq i,j<n$ we
have 
\begin{equation*}
|A_{i}\cap \lambda |=|A_{j}\cap \lambda |,
\end{equation*}%
as $\lambda \cap \mathbb{N}=\{0,1,\dots ,f(m)\}$ for some $m\geq n$, and $n$
divides $f(m)$ for every such $m$. In particular, this shows that $\mathfrak{%
num}\left( A_{i}\right) =\mathfrak{num}\left( A_{j}\right) $ for every $%
0\leq i,j<n$, hence 
\begin{equation*}
\alpha =\mathfrak{num}\left( \mathbb{N}_{0}\right) =\sum_{i=0}^{n-1}%
\mathfrak{num}\left( A_{i}\right) =n\cdot \mathfrak{num}\left( A_{0}\right) .
\end{equation*}

(ii) Let $\lambda \sqsupseteq \{1,\dots ,n!^{n!}\}$, with $\lambda \in 
\mathfrak{B}\left( \mathbb{N}_{0}\right) $. As noticed in (i) above, it must be $%
\lambda \cap \mathbb{N}=\{1,\dots ,m!^{m!}\}$ for some $m\geq n$. If $a=m!^{%
\frac{m!}{n}}$, we can rewrite $\{1,\dots ,m!^{m!}\}$ as $\{1,\dots ,a^{n}\}$%
. Hence $|\left\{ m^{n}\ |\ m\in \mathbb{N}\right\} \cap \lambda |\ =a=|%
\mathbb{N}\cap \lambda |^{\frac{1}{n}}$. The thesis is reached by taking the 
$\Lambda $-limit on the above equality.\end{proof}

\begin{remark}
Of course, the choice of $\mathfrak{D}\left(\mathbb{N}_{0}\right)$ is not
intrinsic, and has been done so to make it possible to have the properties
listed in Proposition \ref{basic}. Some additional motivations for this
choice of $\mathfrak{D}\left(\mathbb{N}_{0}\right)$ can be found in \cite%
{BDN2018}; different motivations have lead the authors of \cite{BF} to make
the following different choice:%
\begin{equation*}
\lambda \in \mathfrak{D}_{1}(\mathbb{N}_{0})\Leftrightarrow \exists m\in 
\mathbb{N}\ \text{such that}\ \lambda =\left\{ 0,\dots ,2^{m}-1\right\} .
\end{equation*}
This can be seen as a feature of this approach: different algebraical
properties of the numerosity can be rather easily obtained by changing the
label set.
\end{remark}

\subsection{Numerosity of the integers\label{inte}}

We proceed as in the case of the natural numbers. We define $\mathfrak{D}(%
\mathbb{Z})$ as follows: 
\begin{equation*}
\lambda \in \mathfrak{D}\left( \mathbb{Z}\right) \Leftrightarrow \exists
m\in \mathbb{N}\ \text{such that}\ \lambda =\left\{ -m!^{m!},\dots
,m!^{m!}\right\} .
\end{equation*}%
Clearly $\mathfrak{D}\left( \mathbb{Z}\right) \subset \mathfrak{B}(\mathbb{N}%
_{0})$ and hence, by Lemma \ref{pietro}, 
\begin{equation*}
\mathfrak{B}(\mathbb{Z}):=\overline{\mathfrak{B}\left(\mathbb{N}%
_{0}\right)\cap \mathfrak{D}(\mathbb{Z})}
\end{equation*}%
is a label set. Using this label basis for every $z\in \mathbb{Z}$, 
\begin{equation*}
\ell (z)\cap \mathbb{Z}=\{-n(z),\dots ,n(z)\},
\end{equation*}%
where 
\begin{equation*}
n(z):=\min \left\{ m!^{m!}\ |\ m\in \mathbb{N},\ m!^{m!}\geq |z|\right\} .
\end{equation*}

Moreover, as $\mathfrak{B}(\mathbb{Z})\subseteq \mathfrak{B}(\mathbb{N})$,
by Proposition \ref{coh} the numerosities constructed with $\mathfrak{B}(%
\mathbb{Z})$ are coherent with those constructed with $\mathfrak{B}(\mathbb{N%
})$.

With this choice of $\mathfrak{D}\left( \mathbb{Z}\right) ,$\ $\mathfrak{num}%
\left( \mathbb{Z}\right) =2\alpha +1$ and we have that 
\begin{equation}
\mathfrak{num}\left( \mathbb{Z}_{<0}\right) =\mathfrak{num}\left( \mathbb{Z}%
_{>0}\right) =\alpha ;  \label{yout}
\end{equation}%
this equality agrees with the intuition that the positive numbers are as
many as the negative numbers.

Just as an example of a possible application, let us prove the following
result for subgroups of $\mathbb{Z}$, which reminds Lagrange's Theorem for
finite groups:

\begin{theorem}
Let $S:=m\mathbb{Z}$ be a subgroup of $(\mathbb{Z},+)$. Then%
\begin{equation}
\frac{\mathfrak{num}\left( \mathbb{Z}\right) }{\mathfrak{num}\left( S\right) 
}\sim m=\mathfrak{num}\left( \mathbb{Z}_{m}\right) .  \label{zeta}
\end{equation}
\end{theorem}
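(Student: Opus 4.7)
The plan is to compute both $\mathfrak{num}(\mathbb{Z})$ and $\mathfrak{num}(S)$ explicitly in closed form in terms of $\alpha$, form their ratio, and then verify that it differs from $m$ by an infinitesimal, closing the chain with the finite sets principle on the right.

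First, I would reconfirm $\mathfrak{num}(\mathbb{Z}) = 2\alpha + 1$, as given in Equation \eqref{yout} and the sentence preceding it, by writing $\mathbb{Z} = \mathbb{Z}_{<0}\sqcup\{0\}\sqcup\mathbb{Z}_{>0}$ and invoking the sum principle. Then I would compute $\mathfrak{num}(S)$ in the same spirit: decompose $m\mathbb{Z} = (-m\mathbb{N}) \sqcup \{0\} \sqcup m\mathbb{N}$. Proposition \ref{basic}(i) (applied, via the coherence Proposition \ref{coh}, to the label set $\mathfrak{B}(\mathbb{Z})\subset\mathfrak{B}(\mathbb{N}_0)$) gives $\mathfrak{num}(m\mathbb{N}) = \alpha/m$. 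Under the symmetric labelling $\ell_{\mathfrak{B}(\mathbb{Z})}$, the label of $k$ and of $-k$ agree, so the bijection $k\mapsto -k$ from $m\mathbb{N}$ onto $-m\mathbb{N}$ preserves labels; by the comparison principle (Theorem \ref{1}(iv)) we get $\mathfrak{num}(-m\mathbb{N}) = \alpha/m$ as well. The sum principle then yields $\mathfrak{num}(S) = \tfrac{2\alpha}{m} + 1$. (Alternatively, one can avoid Proposition \ref{basic} entirely by observing that for $\lambda=\{-n!^{n!},\dots,n!^{n!}\}\in\mathfrak{D}(\mathbb{Z})$ with $n\ge m$, the number $n!^{n!}$ is divisible by $m$, so $|m\mathbb{Z}\cap\lambda| = 2n!^{n!}/m + 1$, and take the $\Lambda$-limit directly via the labels principle.)

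Second, with both values in hand, I would compute
\begin{equation*}
\frac{\mathfrak{num}(\mathbb{Z})}{\mathfrak{num}(S)} - m \;=\; \frac{(2\alpha+1) - m\bigl(\tfrac{2\alpha}{m}+1\bigr)}{\tfrac{2\alpha}{m}+1} \;=\; \frac{1-m}{\tfrac{2\alpha}{m}+1}.
\end{equation*}
Since $\alpha$ is infinite (it majorises every $n\in\mathbb{N}$ by the monotonicity of the $\Lambda$-limit), the denominator $\tfrac{2\alpha}{m}+1$ is infinite as well, while the numerator $1-m$ is a fixed integer; thus the difference is infinitesimal, i.e.\ $\frac{\mathfrak{num}(\mathbb{Z})}{\mathfrak{num}(S)}\sim m$. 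Finally, the equality $m=\mathfrak{num}(\mathbb{Z}_m)$ is immediate from the finite sets principle (Theorem \ref{1}(i)), since $|\mathbb{Z}_m|=m$.

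I expect the only real subtlety to be the first step, specifically the justification for transporting Proposition \ref{basic}(i) from $\mathfrak{B}(\mathbb{N}_0)$ to $\mathfrak{B}(\mathbb{Z})$ and for treating the negative multiples symmetrically; the coherence Proposition \ref{coh} settles the former, and the symmetry of the labelling in $\mathfrak{B}(\mathbb{Z})$ combined with the comparison principle settles the latter. Everything else is arithmetic in $\mathbb{E}$. Note that the relation is $\sim$ and not $=$, which is in fact sharp: the discrepancy $\tfrac{1-m}{2\alpha/m+1}$ is a nonzero infinitesimal whenever $m\ne 1$, reflecting the contribution of the neutral element $0$ to both numerator and denominator.
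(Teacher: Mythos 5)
Your proposal is correct and takes essentially the same route as the paper's proof: both decompose $S$ into its positive part, negative part, and $\{0\}$, obtain $\mathfrak{num}\left( S\right) =\frac{2\alpha }{m}+1$ from Proposition \ref{basic}.(i) together with the symmetry between positive and negative multiples, and conclude with the same arithmetic in $\mathbb{E}$ using that $\alpha $ is infinite. The only difference is that you spell out what the paper leaves implicit or calls trivial --- the transport of Proposition \ref{basic}.(i) via the coherence Proposition \ref{coh}, and the label-preserving bijection $k\mapsto -k$ justified by the comparison principle (Theorem \ref{1}.(iv)) --- which is a welcome sharpening rather than a new approach.
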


\begin{proof} By definition, $S=\{mn\mid n\in \mathbb{Z}\}$. We write $%
S=S_{+}\cup S_{-}\cup \{0\}$, where 
\begin{equation*}
S_{+}=\{a\in S\mid a>0\},S_{-}=\{a\in S\mid a<0\}.
\end{equation*}%
By Proposition (\ref{basic}) we know that $\mathfrak{num}\left( S_{+}\right)
=\frac{\alpha }{m}$, and it is trivial to show that $\mathfrak{num}\left(
S_{-}\right) =\mathfrak{num}\left( S_{+}\right) $. Hence $\mathfrak{num}%
\left( S\right) =\mathfrak{num}\left( S_{+}\right) +\mathfrak{num}\left(
S_{-}\right) +1=2\frac{\alpha }{m}+1$. As $\mathfrak{num}\left( \mathbb{Z}%
\right) =2\alpha +1,$ we have 
\begin{equation*}
\frac{\mathfrak{num}\left( \mathbb{Z}\right) }{\mathfrak{num}\left( S\right) 
}=\frac{2\alpha +1}{\frac{2\alpha }{m}+1}=\frac{2\alpha +1}{\frac{1}{m}%
\left( 2\alpha +m\right) }\sim m,
\end{equation*}%
as $\alpha $ is infinite.\end{proof}

\begin{remark}
Let us notice that, with our labelling, in the above Proposition we do not
have the equality 
\begin{equation}
\frac{\mathfrak{num}\left( \mathbb{Z}\right) }{\mathfrak{num}(S)}=m
\label{ciccia}
\end{equation}
because not all lateral classes $\left[ k\right] $ in the quotient have the
same numerosity: 
\begin{equation*}
\mathfrak{num}\left( \left[ k\right] \right) =\frac{2\alpha }{m}\ \ \text{if}%
\ \ k\neq 0;\ \ \mathfrak{num}\left( \left[ 0\right] \right) =\frac{2\alpha 
}{m}+1.
\end{equation*}%
If we want the equality in (\ref{zeta}), then we can replace $\mathfrak{D}%
\left( \mathbb{Z}\right) $ with $\mathfrak{D}_{1}\left( \mathbb{Z}\right) $
defined as follows:%
\begin{equation*}
\lambda \in \mathfrak{D}_{1}\left( \mathbb{Z}\right) \Leftrightarrow \exists
m\in \mathbb{N}\ \text{such that}\ \lambda =\left\{ -m!^{m!}+1,\dots
,m!^{m!}\right\} .
\end{equation*}%
In this case, we get (\ref{ciccia}), but $\mathfrak{num}\left( \mathbb{Z}%
\right) =2\alpha $ and the equality (\ref{yout}) is violated.
\end{remark}

\subsection{Numerosity of the rationals\label{raz}}

The labelling of $\mathbb{Z}$ given in Section \ref{inte} can be extended in
several ways to the rationals. A natural one is obtained by setting

\begin{equation*}
\mathfrak{D}\left( \mathbb{Q}\right) :=\left\{ \ \mathbb{H}_{n}\mid \exists
m\in \mathbb{N},\ n=m!^{m!}\right\},
\end{equation*}
where 
\begin{equation*}
\mathbb{H}_{n}:=\left\{\frac{a}{n}\mid a\in\mathbb{Z}, \
-n^{2}<a<n^{2}\right\}.
\end{equation*}
By Lemma \ref{pietro}, 
\begin{equation*}
\mathfrak{B}(\mathbb{Q}):=\overline{\mathfrak{D}(\mathbb{Q})}
\end{equation*}%
is a label set.

As $\mathfrak{D}\left( \mathbb{Q}\right) \subset \mathfrak{B}(\mathbb{Z})$,
by Proposition \ref{coh} the numerosities constructed with $\mathfrak{B}(%
\mathbb{Q})$ are coherent with those constructed with $\mathfrak{B}(\mathbb{Z%
})$. Using the label basis $\mathfrak{B}(\mathbb{Q})$ we have that, for
every $q\in \mathbb{Q}$, 
\begin{equation*}
\ell (q)\cap \mathbb{Q}=\mathbb{H}_{n(q)},
\end{equation*}%
where 
\begin{equation*}
n(q):=\min \left\{ m!^{m!}\ |\ m\in \mathbb{N},\ m!^{m!}\geq |q|\right\} .
\end{equation*}

This labelling has been chosen in order to have the following results:

\begin{proposition}
\label{numerosita razionali} Using the labelling $\mathfrak{B}(\mathbb{Q})$,
the following properties hold

\begin{itemize}
\item[$(i)$] for all $n\in \mathbb{N}_{0}$, $\mathfrak{num}\left( \mathbb{Q}%
\cap \left[ n,n+1\right) \right) =\alpha $;

\item[$(ii)$] for all $p,q\in \mathbb{R}$ with $p<q$, $\frac{\mathfrak{num}%
\left( \mathbb{Q}\cap \left[ p,q\right) \right) }{\mathfrak{num}\left( 
\mathbb{Q}\cap \left[ 0,1\right) \right) }\sim (p-q)$;

\item[$(iii)$] $\mathfrak{num}\left( \mathbb{Q}\right) =2\alpha ^{2}+1$.
\end{itemize}
\end{proposition}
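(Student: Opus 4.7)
The plan is to apply the labels principle (part $(iii)$ of Theorem \ref{1}) together with Lemma \ref{B1} to reduce each of the three claims to an explicit finite count. The key preliminary reduction is that for any label $\lambda \in \mathfrak{L}_{0}(\mathfrak{B}(\mathbb{Q}))$, writing $\lambda = \mathbb{V}_{k}(\mathfrak{t})$ for some $\mathfrak{t} \in \mathfrak{B}(\mathbb{Q})$, one has $\lambda \cap \mathbb{Q} = \mathfrak{t} \cap \mathbb{Q}$ because rationals are atoms and only $\mathbb{V}_{0}$ contributes atoms. By Lemma \ref{B1} I can further split $\mathfrak{t} = \mathfrak{s} \cup \mathfrak{u}$ with $\mathfrak{s} = \mathbb{H}_{N} \in \mathfrak{D}(\mathbb{Q})$ for some $N = m!^{m!}$ and $\mathfrak{u}$ disjoint from every element of $\mathfrak{D}(\mathbb{Q})$; since every rational lies in some $\mathbb{H}_{n}$, this forces $\mathfrak{u} \cap \mathbb{Q} = \emptyset$. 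Consequently, for every $E \subseteq \mathbb{Q}$ and every such $\lambda$, $|E \cap \lambda| = |E \cap \mathbb{H}_{N}|$, and the labels principle gives $\mathfrak{num}(E) = \lim_{\lambda \uparrow \Lambda} |E \cap \mathbb{H}_{N}|$.

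For (i), I then compute $|\mathbb{Q} \cap [n, n+1) \cap \mathbb{H}_{N}| = |\{a/N : nN \leq a < (n+1)N\}| = N$ as soon as $N > n+1$; taking the $\Lambda$-limit yields $\mathfrak{num}(\mathbb{Q} \cap [n, n+1)) = \alpha$, since $\alpha$ is computed as $\lim N$ through the coherent labelling (compare with the argument in Proposition \ref{basic}). For (ii), given $p < q$ in $\mathbb{R}$, a direct count shows $|\mathbb{Q} \cap [p, q) \cap \mathbb{H}_{N}|$ differs from $(q - p)N$ by at most a constant (coming from the floor/ceiling corrections at the endpoints), while $|\mathbb{Q} \cap [0, 1) \cap \mathbb{H}_{N}| = N$; hence the quotient of the two $\Lambda$-limits differs from $q - p$ by an infinitesimal, which is exactly the content of the relation $\sim$ in the statement.

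For (iii) it suffices to count $|\mathbb{H}_{N}|$ itself, namely the number of integers $a$ in the prescribed range, which with the convention of $\mathbb{H}_{N}$ has the form $2N^{2} + 1$, so that $\mathfrak{num}(\mathbb{Q}) = \lim (2N^{2} + 1) = 2\alpha^{2} + 1$. The main delicate step I expect is the very first reduction: establishing that passing from an arbitrary $\lambda \in \mathfrak{L}_{0}(\mathfrak{B}(\mathbb{Q}))$ to the distinguished $\mathbb{H}_{N}$ inside the $\mathfrak{t}$-part introduces no new rationals, neither via the higher-rank layers $\mathbb{V}_{k}$ nor via the $\mathfrak{u}$-part of Lemma \ref{B1}. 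Both facts rest on the atomicity of $\mathbb{Q}$ and on the disjointness built into the splitting; once they are in hand, the three claims collapse to routine finite arithmetic under the $\Lambda$-limit.
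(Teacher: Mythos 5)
Your proposal follows essentially the same route as the paper's own proof: both reduce, via the labels principle, to counting intersections with the distinguished sets $\mathbb{H}_{N}$ and then take $\Lambda$-limits, your only addition being the explicit justification (atomicity of the rationals plus the splitting of Lemma \ref{B1}) of the reduction step that the paper leaves implicit. One small caveat: with the paper's strict inequalities $-N^{2}<a<N^{2}$ one gets $\left\vert \mathbb{H}_{N}\right\vert =2N^{2}-1$ rather than $2N^{2}+1$, but the paper's own proof of $(iii)$ makes the identical endpoint slip (asserting $\left\vert \mathbb{H}_{n}\cap \mathbb{Q}_{>0}\right\vert =n^{2}$), so your computation matches the convention the paper actually uses.
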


\begin{proof} (i) Take $H_{m}\in \mathfrak{B}_{\mathbb{Q}}$ with $m$
larger than $n+1$. Then $|\left( \mathbb{Q}\cap \left[ n,n+1\right) \right)
\cap \mathbb{H}_{m}|\ =m$, hence eventually $|\left( \mathbb{Q}\cap \left[
n,n+1\right) \right) \cap \mathbb{H}_{m}|\ =|\mathbb{N}\cap \mathbb{H}_{m}|$%
, and the thesis follows by taking the $\Lambda $-limit.

(ii) Take $\mathbb{H}_{m}\in \mathfrak{B}_{\mathbb{Q}}$ with $m$ larger than 
$|p|,|q|$. Then $\left( \mathbb{Q}\cap \left[ p,q\right) \right) =(p-q)m$ if 
$p\in \mathbb{H}_{m}$, $\left( \mathbb{Q}\cap \left[ p,q\right) \right)
=(p-q)m-1$ if $p\notin \mathbb{H}_{m}$. By taking the $\Lambda $-limit we
have that either $\mathfrak{num}\left( \mathbb{Q}\cap \left[ p,q\right)
\right) =(p-q)\alpha -1$ or $\mathfrak{num}\left( \mathbb{Q}\cap \left[
p,q\right) \right) =(p-q)\alpha $, and the thesis follows as, by (i), $%
\mathfrak{num}\left( \mathbb{Q}\cap \left[ 0,1\right) \right) =\alpha $.

(iii) Let us first compute $\mathfrak{num}\left( \mathbb{Q}_{>0}\right) $.
Let $\lambda =\mathbb{H}_{n}\in \mathfrak{B}_{\mathbb{Q}}$. Then $|\mathbb{H}%
_{n}\cap \mathbb{Q}_{>0}|=n^{2}$, hence if $\varphi $ is the enumeration of $%
\mathbb{Q}_{>0}$ and $\psi $ is the enumeration of $\mathbb{N}$, we have
that $\varphi (\lambda )=\psi (\lambda )^{2}$, so%
\begin{equation*}
\mathfrak{num}\left( \mathbb{Q}_{>0}\right) =\lim_{\lambda \uparrow \Lambda
}\varphi (\lambda )=\lim_{\lambda \uparrow \Lambda }\psi ^{2}(\lambda
)=\left( \lim_{\lambda \uparrow \Lambda }\psi (\lambda )\right) ^{2}=\alpha
^{2}.
\end{equation*}%
Therefore, as each $\mathbb{H}_{n}$ is symmetrical with respect to $0$, we
also have that $\mathfrak{num}\left( \mathbb{Q}_{<0}\right) =\alpha ^{2}$,
and so

\begin{equation*}
\mathfrak{num}\left( \mathbb{Q}\right) =\mathfrak{num}\left( \mathbb{Q}%
_{<0}\right) +\mathfrak{num}\left( \mathbb{Q}_{>0}\right) +1=2\alpha ^{2}+1.\qedhere
\end{equation*}%
\end{proof}

An example of a possible application, let us prove the following result:

\begin{theorem}
Let $m_{PJ}$ denote the Peano-Jordan measure of a $m_{PJ}$-measurable set $%
E. $ Then%
\begin{equation}
m_{PJ}\left( E\right) =st\left( \frac{\mathfrak{num}\left( E\cap \mathbb{Q}%
\right) }{\mathfrak{num}\left( \left[ 0,1\right) \cap \mathbb{Q}\right) }%
\right) =st\left( \frac{1}{\mathfrak{\alpha }}\cdot \mathfrak{num}\left(
E\cap \mathbb{Q}\right) \right) .  \label{PJ}
\end{equation}
\end{theorem}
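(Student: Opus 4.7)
The plan is a Peano--Jordan sandwich argument translated into the language of numerosities: first establish the identity for half-open intervals, extend by additivity to elementary sets (finite disjoint unions of intervals), and then bracket an arbitrary $m_{PJ}$-measurable $E$ between two such elementary sets. The interval case has essentially been done for us in Proposition \ref{numerosita razionali}(ii); what remains is a direct translation of the classical Peano--Jordan approximation scheme.

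For a half-open interval $I=[p,q)$ with reals $p<q$, Proposition \ref{numerosita razionali}(ii) gives $\mathfrak{num}(\mathbb{Q}\cap I)/\alpha \sim q-p$, so taking standard parts yields $st(\mathfrak{num}(\mathbb{Q}\cap I)/\alpha) = q-p = m_{PJ}(I)$. Any other interval type (open, closed, or half-open the other way) differs from a half-open one by at most two rational points, a difference that becomes infinitesimal after division by $\alpha$ and so is absorbed by $st$. For an elementary set $A=\bigsqcup_{i=1}^{n}I_i$, the sum principle (Theorem \ref{1}(v)) gives $\mathfrak{num}(A\cap\mathbb{Q})=\sum_{i=1}^{n}\mathfrak{num}(I_i\cap\mathbb{Q})$; dividing by $\alpha$ and using additivity of $st$ on finite sums of finite hyperreals produces
$$st\!\left(\frac{\mathfrak{num}(A\cap\mathbb{Q})}{\alpha}\right)=\sum_{i=1}^{n}m_{PJ}(I_i)=m_{PJ}(A).$$

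Now fix $\varepsilon>0$. Since $E$ is $m_{PJ}$-measurable (hence bounded), there exist elementary sets $A\subseteq E\subseteq B$ with $m_{PJ}(B)-m_{PJ}(A)<\varepsilon$. Euclid's principle (Theorem \ref{1}(ii)) yields the chain $\mathfrak{num}(A\cap\mathbb{Q})\le \mathfrak{num}(E\cap\mathbb{Q})\le \mathfrak{num}(B\cap\mathbb{Q})$; in particular $\mathfrak{num}(E\cap\mathbb{Q})/\alpha$ is finite (bounded above by a hyperreal infinitely close to $m_{PJ}(B)$), so its standard part is defined. Dividing the chain by $\alpha$, applying monotonicity of $st$, and invoking the elementary-set identity established above gives $m_{PJ}(A)\le st(\mathfrak{num}(E\cap\mathbb{Q})/\alpha)\le m_{PJ}(B)$. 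Since also $m_{PJ}(A)\le m_{PJ}(E)\le m_{PJ}(B)$, both $m_{PJ}(E)$ and $st(\mathfrak{num}(E\cap\mathbb{Q})/\alpha)$ lie in a common interval of length $<\varepsilon$; letting $\varepsilon\to 0$ yields the desired equality. The main subtlety lies not in this argument but in the preceding design of $\mathfrak{B}(\mathbb{Q})$: the symmetric choice $\mathbb{H}_n$ is what ensures that rationals have uniform density on intervals at every label scale, which is precisely what makes Proposition \ref{numerosita razionali}(ii) available here.
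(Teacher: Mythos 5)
Your proposal is correct and follows essentially the same route as the paper's own proof: the interval case via Proposition \ref{numerosita razionali}, extension to finite disjoint unions of intervals by the Sum Principle, and the Peano--Jordan sandwich $A\subseteq E\subseteq B$ combined with Euclid's principle and monotonicity of the standard part. If anything, your write-up is slightly more careful than the paper's (you handle the different interval endpoint types, note the finiteness of $\mathfrak{num}\left( E\cap \mathbb{Q}\right) /\alpha$ needed for $st$ to be defined, and silently correct the sign typo $(p-q)$ in Proposition \ref{numerosita razionali}(ii)), but these are refinements of the same argument, not a different one.
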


\begin{proof} If $E$ is an interval then the result follows from Proposition \ref%
{numerosita razionali}. We can extend this result to a plurinterval $%
E=\dbigcup E_{i}$ by the Sum Principle (see Theorem \ref{1}.(v)). In general, if 
$E$ is $m_{PJ}$-measurable, $\forall \varepsilon \in \mathbb{R}_{>0}$ there
are two plurintervals $A$ and $B$ such that%
\begin{equation*}
A\subseteq E\subseteq B
\end{equation*}%
\begin{equation}
\left\vert m_{PJ}(B)-m_{PJ}(E)\right\vert <\varepsilon \ \text{and}\
\left\vert m_{PJ}(E)-m_{PJ}(A)\right\vert <\varepsilon  \label{ppp}
\end{equation}%
By the Euclid Principle (see Theorem \ref{1}.(ii)), we have that%
\begin{equation*}
\mathfrak{num}\left( A\cap \mathbb{Q}\right) \subseteq \mathfrak{num}\left(
E\cap \mathbb{Q}\right) \subseteq \mathfrak{num}\left( E\cap \mathbb{Q}%
\right),
\end{equation*}%
then%
\begin{equation*}
m_{PJ}(A)\leq st\left( \frac{1}{\mathfrak{\alpha }}\cdot \mathfrak{num}%
\left( E\cap \mathbb{Q}\right) \right) \leq m_{PJ}(B).
\end{equation*}%
The conclusion follows by the inequality above, Equation (\ref{ppp}) and the arbitrariness of $\varepsilon$. \end{proof}

\section{Numerosities of non-denumerable sets\label{Card}}

\subsection{A suitable labelling\label{SL}}

Let $\mathbb{\hat{R}}^{N},$ $N\in \mathbb{N}$, $\mathbb{\hat{R}}^{N}\subset 
\mathbb{A}$, be a family of sets such that 
\begin{eqnarray*}
\mathbb{\hat{R}}^{0} &=&\mathbb{R}, \\
\mathbb{\hat{R}}^{N} &\subset &\mathbb{\hat{R}}^{N+1}
\end{eqnarray*}%
and each $\mathbb{\hat{R}}^{N+1}$ is isomorphic to $\mathbb{R}^{N}$. This
akward distinction between $\mathbb{\hat{R}}^{N}$ and $\mathbb{R}^{N}$ is
useful since, in this contest, it is easier to deal with atoms and the
points of $\mathbb{R}^{N}$ are $N$-ples. Moreover, we need to assume that
the isomorphism%
\begin{equation*}
\Psi :\mathbb{R}^{N}\rightarrow \mathbb{\hat{R}}^{N}
\end{equation*}%
preserves also the labels, namely, if $\left( x_{1},\dots,x_{N}\right) \in 
\mathbb{R}^{N},$ then 
\begin{equation}
\ell \left[ \Psi \left( x_{1},\dots,x_{N}\right) \right] =\max \left\{ \ell
\left( x_{1}\right) ,\dots,\ell \left( x_{N}\right) \right\}.  \label{psi}
\end{equation}

If $A\in \wp \left( \mathbb{\hat{R}}^{N}\right) $ for some $N$, we denote by 
$\mathcal{H}_{d}\left( A\right) $ the normalized $d$-dimensional Hausdorff
measure\footnote{%
The normalized $d$-dimensional Hausdorff measure is given by 
\begin{equation*}
\mathcal{H}_{d}\left( A\right) =N_{d}H_{d}\left( A\right),
\end{equation*}%
where $H_{d}$ is the usual Hausdorff measure and the normalization factor is
such that $\mathcal{H}_{d}\left( A\right) $ coincides with the usual
Lebesgue $m_{d}$ measure for $d\in \mathbb{N}$.} in $\mathbb{\hat{R}}^{N}$.
We introduce on $\wp \left( \mathbb{A}\right) $ the following order
relation: given $A,B\in\wp \left( \mathbb{A}\right) $, if $\left\vert
A\right\vert \neq \mathfrak{c,}$ we let $A\sqsubseteq
B\Leftrightarrow\left\vert A\right\vert \leq \left\vert B\right\vert$; if $%
\left\vert A\right\vert =\left\vert B\right\vert =\mathfrak{c,}$ we let

\begin{equation}  \label{San Gennaro aiutaci tu}
A\sqsubseteq B\Leftrightarrow \ \mathcal{H}_{d}\left( A\cap \mathbb{\hat{R}}%
^{d}\right) \leq \mathcal{H}_{d}\left( B\cap \mathbb{\hat{R}}^{d}\right).
\end{equation}%
If $A\sqsubseteq B$ and $B\sqsubseteq A,$ we will write $A\equiv B.$

We define $\mathfrak{D}\left( \mathbb{A}\right) $ as follows: $\lambda \in 
\mathfrak{D}\left( \mathbb{A}\right) $ if and only if 
\begin{equation*}
\lambda =\Xi \cup \mathfrak{A},
\end{equation*}%
where

\begin{itemize}
\item $\Xi\in\mathfrak{B}\left(\mathbb{Q}\right)$;

\item $\mathfrak{A}\in \wp _{fin}\left( \wp \left( \mathbb{A}\right)
\right); $

\item for all $A,B\in \mathfrak{A}$, the following property holds: 
\begin{equation}
A\sqsubseteq B\Rightarrow \left\vert A\cap \Xi \right\vert \leq\left\vert
B\cap \Xi \right\vert;  \label{barreto}
\end{equation}%
\begin{equation}
\left\vert A\right\vert >\aleph _{0}\Rightarrow \left\vert \Xi \cap
A\right\vert >\left\vert \mathbb{Q}\cap \Xi \right\vert ^{2}.
\label{barreto2}
\end{equation}
\end{itemize}

\begin{lemma}
\label{adamo}If $A_{1},\dots ,A_{l}\subset \mathbb{A}$ and $F\subset \mathbb{%
A}$ is a finite set, there exists $\Xi \in \wp _{fin}\left( \mathbb{A}%
\right) $ such that $F\subseteq \Xi \mathbb{\ }$and 
\begin{equation*}  \label{San Gennaro aiutaci tu}
\Xi \cup \left\{ A_{1},\dots ,A_{l}\right\} \in \mathfrak{D}\left( \mathbb{A}%
\right).
\end{equation*}
\end{lemma}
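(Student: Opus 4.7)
The plan is to exhibit $\Xi = \mathbb{H}_n \cup T$ with $n = m!^{m!}$ for a suitably large $m$ and $T$ a finite subset of $\mathbb{A} \setminus \mathbb{Q}$; by Lemma \ref{B1} every element of $\mathfrak{B}(\mathbb{Q}) = \overline{\mathfrak{D}(\mathbb{Q})}$ has exactly such a form (up to absorbing any rationals of $T$ into $\mathbb{H}_n$ by enlarging $n$). The constraints to meet are: $F \subseteq \Xi$; the $\sqsubseteq$-monotonicity (\ref{barreto}) $|A_i \cap \Xi| \leq |A_j \cap \Xi|$ whenever $A_i \sqsubseteq A_j$; and the lower bound (\ref{barreto2}) $|A_i \cap \Xi| > |\mathbb{Q} \cap \Xi|^2 = |\mathbb{H}_n|^2$ for each uncountable $A_i$.

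First I would sort $A_1, \ldots, A_l$ into $\equiv$-classes $C_1 \sqsubsetneq \cdots \sqsubsetneq C_k$ using the total preorder $\sqsubseteq$ on $\wp(\mathbb{A})$. Next I would pick $n = m!^{m!}$ large enough that $\mathbb{H}_n \supseteq F \cap \mathbb{Q}$, and choose integer targets $N_1 \leq N_2 \leq \cdots \leq N_k$ such that $N_j$ is constant across each class, strictly exceeds $|\mathbb{H}_n|^2$ whenever $C_j$ contains an uncountable set, and does not exceed the common cardinality of the sets in $C_j$ whenever they are finite. Since finite $A_i$'s force only an upper bound $N_j \leq |A_i|$, and uncountable sets only a lower bound $N_j > |\mathbb{H}_n|^2$, all of these constraints can be met simultaneously by taking $m$, and hence $n$, sufficiently large.

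Finally I would construct a finite set $T \subseteq \mathbb{A} \setminus \mathbb{Q}$ with $T \supseteq F \setminus \mathbb{Q}$ so that $|A_i \cap (\mathbb{H}_n \cup T)| = N_{j(i)}$ for every $i$, where $j(i)$ is the index of the class of $A_i$. The cleanest way is to work with the Boolean atoms $\alpha_I = \bigcap_{i \in I} A_i \cap \bigcap_{i \notin I} A_i^c$ of the algebra generated by $\{A_1,\ldots,A_l\}$ and choose finite subsets $\beta_I \subseteq \alpha_I \setminus \mathbb{Q}$ whose sizes $|\beta_I|$ solve the linear system $\sum_{I \ni i} |\beta_I| = N_{j(i)} - |A_i \cap (\mathbb{H}_n \cup (F \setminus \mathbb{Q}))|$. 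Equivalently, I would process the classes in $\sqsubseteq$-increasing order, and for each $A_i \in C_j$ add fresh elements of $A_i$ (chosen from atoms $\alpha_I$ all of whose indices lie in $C_j$ or higher) until $|A_i \cap \Xi|$ reaches $N_j$.

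The main obstacle is precisely this last step: when I add an element of $A_i$ to $T$, it may lie in other $A_{i'}$'s and so inflate $|A_{i'} \cap \Xi|$, threatening either the target $N_{j(i')}$ of an already-processed class or the monotonicity condition (\ref{barreto}). The remedy is the restriction to atoms $\alpha_I$ supported on indices of class $\geq j$, so that spillover only helps sets in later (larger) classes, whose targets are $\geq N_j$ anyway. Feasibility of this choice rests on such atoms being sufficiently large: for infinite $A_i$ they are either infinite or absent (in which case the relevant sum already accounts for the forced spillover), and for finite $A_i$ we take $N_{j(i)} = |A_i|$, so no real choice has to be made. Taking $m$ (and hence $|\mathbb{H}_n|$) large enough guarantees the extra slack needed to reconcile (\ref{barreto2}) with the cumulative spillover, completing the construction.
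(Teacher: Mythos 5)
Your overall skeleton is the same as the paper's: sort the $A_i$ by $\sqsubseteq$, reduce to pairwise disjoint pieces (your Boolean atoms $\alpha_I$ play the role of the paper's disjoint refinement $B_1,\dots,B_m$ satisfying (\ref{maria})), and enlarge $\Xi$ by finite blocks of fresh points, avoiding $\mathbb{Q}$ and of size exceeding $\left\vert \mathbb{Q}\cap \Xi \right\vert^{2}$ when the set is uncountable, so as to secure (\ref{barreto}) and (\ref{barreto2}). Where you genuinely diverge is in demanding an \emph{exact} common value $N_j$ of $\left\vert A_i\cap \Xi \right\vert$ on each $\equiv$-class and solving a linear system over the atoms; the paper instead pads one set at a time, making the counts \emph{strictly increase} along the chosen linear order, and it never attempts equality on ties.

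That divergence is exactly where your proof breaks, and the failure is concrete. Take $A_1,A_2$ disjoint and countably infinite, and $A_3=A_1\cup A_2$. All three have cardinality $\aleph_0$, hence are pairwise $\equiv$-equivalent, so your scheme assigns them one common target $N_j$; but for every $\Xi$ whatsoever one has $\left\vert A_3\cap \Xi \right\vert=\left\vert A_1\cap \Xi \right\vert+\left\vert A_2\cap \Xi \right\vert$, so the three counts can be equal only if all vanish, which is impossible once $F$ meets $A_1$. Similarly, for nested $A_1\subsetneq A_2$ of the same cardinality with $F\cap (A_2\setminus A_1)\neq \varnothing$, the atom $A_2\setminus A_1$ carries forced mass that only the count of $A_2$ sees. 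In both cases your linear system is infeasible for \emph{every} $m$, so the assertion that ``all of these constraints can be met simultaneously by taking $m$, and hence $n$, sufficiently large'' is false, and your ``atoms supported on classes $\geq j$'' device is no remedy: it controls spillover into later classes, not spillover within a class. To be fair, these examples show that under the literal two-sided reading of (\ref{barreto}) the lemma itself fails; the paper evidently intends monotonicity only along its chosen order --- its inductive construction produces strictly increasing counts even on $\equiv$-tied sets --- and it compresses the delicate non-disjoint case into an unproved ``it is easy to check''. But any proof of the lemma as intended must, like the paper's, refrain from imposing equality across $\equiv$-classes; your constant per-class targets are precisely the constraint that cannot be achieved.
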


\begin{proof} Let $A_{1},\dots ,A_{l}\subset \mathbb{A}$ and $F\subset 
\mathbb{A}$ be given; first we prove the Lemma in the case in which 
\begin{equation}
A_{j}\cap A_{k}=\varnothing \ \  \text{for} \ \ j\neq k.  \label{maria}
\end{equation}%
We order the $A_{j}$'s so that 
\begin{equation*}
j<k\Rightarrow A_{j}\sqsubseteq A_{k},
\end{equation*}%
and we construct a sequence of labels 
\begin{equation*}
\lambda _{k}=\Xi _{k}\cup \left\{ A_{1},\dots ,A_{k}\right\} ,\ k\leq l
\end{equation*}%
such that $\lambda _{k}\in \mathfrak{D}\left( \mathbb{A}\right) ,\ \lambda
_{k}\subset \lambda _{k+1}$ and $\Xi _{k}\supseteq F$. We do it by
induction: for $k=1$, we set%
\begin{equation*}
\lambda _{1}=\Xi _{1}\cup \left\{ A_{1}\right\} ;\ \ \Xi _{1}=F.
\end{equation*}%
Trivially $\lambda _{1}\in \mathfrak{D}\left( \mathbb{A}\right)$, since there is
nothing to verify.

Now, if $k<l,$ in order to define $\Xi _{k+1},$ we consider four cases:

\begin{itemize}
\item[(i)] $\left\vert A_{k+1}\right\vert \leq \aleph _{0};$

\item[(ii)] $\left\vert A_{k+1}\right\vert >\aleph _{0},$ and $%
\mathcal{H}_{d}\left( A_{k+1}\cap \mathbb{\hat{R}}^{N}\right) =0;$

\item[(iii)] $\left\vert A_{k+1}\right\vert >\aleph _{0},$ and $%
\mathcal{H}_{d}\left( A_{k+1}\cap \mathbb{\hat{R}}^{N}\right) >0;$

\item[(iv)] $\left\vert A_{k+1}\right\vert >\mathfrak{c}$.
\end{itemize}

(i) We take a finite set $F_{k+1}\subset A_{k+1}$ such that $\left\vert
F_{k+1}\right\vert >\left\vert A_{k}\cap \Xi _{k}\right\vert $ and we set 
\begin{equation*}
\lambda _{k+1}=\Xi _{k+1}\cup \left\{ A_{1},\dots ,A_{k},A_{k+1}\right\} ,\
\ \Xi _{k+1}=\Xi _{k}\cup F_{k+1}.
\end{equation*}%
Then $\lambda _{k}\in \mathfrak{D}\left( \mathbb{A}\right) $ since (\ref%
{barreto}) holds.

(ii) We take a finite set $F_{k+1}\subset A_{k+1}\backslash \mathbb{Q}$ such
that \[\left\vert F_{k+1}\right\vert >\max \left\{ \left\vert A_{k}\cap \Xi
_{k}\right\vert ,\ \left\vert \mathbb{Q}\cap \Xi _{k}\right\vert
^{2}\right\}\] and we set 
\begin{equation*}
\lambda _{k+1}=\Xi _{k+1}\cup \left\{ A_{1},\dots ,A_{k+1}\right\} ,\ \ \Xi
_{k+1}=\Xi _{k}\cup F_{k+1}.
\end{equation*}%
Then (\ref{barreto}) trivially holds; moreover%
\begin{equation*}
\left\vert A_{k+1}\cap \Xi _{k+1}\right\vert =\left\vert F_{k+1}\right\vert
>\ \left\vert \mathbb{Q}\cap \Xi _{k}\right\vert ^{2}.
\end{equation*}%
Then also (\ref{barreto2}) is satisfied.

(iii) if $\mathcal{H}_{d}\left( A_{k+1}\cap \mathbb{\hat{R}}^{N}\right) >0,$
then $\left\vert A_{k+1}\right\vert \geq \mathfrak{c}$; if $\left\vert
A_{k+1}\right\vert >\mathfrak{c}$ we are in case (iv); if $\left\vert
A_{k+1}\right\vert =\mathfrak{c}$, we take $F_{k+1}\subset \left(
A_{k+1}\cap \mathbb{\hat{R}}^{N}\right) \backslash \left( \mathbb{Q}\cup \Xi
_{k}\right) $ such that $\left\vert F_{k+1}\right\vert >\max \left\{
\left\vert A_{k}\cap \Xi _{k}\right\vert ,\ \left\vert \mathbb{Q}\cap \Xi
_{k}\right\vert ^{2}\right\} $ and we argue as in point (ii).

(iv) $\left\vert A_{k+1}\right\vert >\mathfrak{c}$, we argue as in point
(ii).

Now let us consider the case in which $A_{1},\dots ,A_{l}$ does not
satisfy (\ref{maria}). In this case we take a finite family of sets $%
\left\{ B_{1},\dots ,B_{m}\right\} $ which satisfies (\ref{maria}) and such
that every $A_{k}$ is the union of some $B_{j}$'s. Then, given $\left\{
B_{1},\dots ,B_{m}\right\} $ and $\Xi ,$ we have proved that there exists 
\begin{equation*}
\bar{\lambda}=\bar{\Xi}\cup \left\{ B_{1},\dots ,B_{m}\right\} \in \mathfrak{%
D}\left( \mathbb{A}\right) ,\ \text{with} \ F\subseteq \bar{\Xi}.
\end{equation*}%
Now, it is easy to chek that 
\begin{equation*}
\lambda =\bar{\Xi}\cup \left\{ A_{1},\dots ,A_{l},B_{1},\dots ,B_{m}\right\}
\in \mathfrak{D}\left( \mathbb{A}\right)\qedhere
\end{equation*}

\end{proof}

\begin{lemma}
We have that $\mathfrak{D}\left( \mathbb{Q}\right) \subset \mathfrak{D}%
\left( \mathbb{A}\right) $ and $\left( \mathfrak{D}\left( \mathbb{A}\right)
,\subseteq \right) $ is a directed set.
\end{lemma}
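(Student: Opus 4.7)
The plan is to verify the two claims separately, reducing each directly to the definitions and to Lemma \ref{adamo}.

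For the inclusion $\mathfrak{D}(\mathbb{Q}) \subset \mathfrak{D}(\mathbb{A})$, I would take an arbitrary element $\mathbb{H}_n \in \mathfrak{D}(\mathbb{Q})$ and exhibit a decomposition $\mathbb{H}_n = \Xi \cup \mathfrak{A}$ of the form required by the definition of $\mathfrak{D}(\mathbb{A})$. The natural choice is $\Xi := \mathbb{H}_n$ and $\mathfrak{A} := \varnothing$. Since $\mathfrak{D}(\mathbb{Q}) \subset \mathfrak{B}(\mathbb{Q})$ by construction of $\mathfrak{B}(\mathbb{Q}) = \overline{\mathfrak{D}(\mathbb{Q})}$, we have $\Xi \in \mathfrak{B}(\mathbb{Q})$, and trivially $\varnothing \in \wp_{fin}(\wp(\mathbb{A}))$. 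Conditions (\ref{barreto}) and (\ref{barreto2}) hold vacuously because there are no pairs $A, B \in \mathfrak{A}$ to check. This is essentially a bookkeeping step.

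For directedness of $(\mathfrak{D}(\mathbb{A}), \subseteq)$, given $\lambda_1, \lambda_2 \in \mathfrak{D}(\mathbb{A})$ I would write $\lambda_i = \Xi_i \cup \mathfrak{A}_i$ with $\Xi_i \in \mathfrak{B}(\mathbb{Q})$ and $\mathfrak{A}_i \in \wp_{fin}(\wp(\mathbb{A}))$. Set $F := \Xi_1 \cup \Xi_2$, which is a finite subset of $\mathbb{A}$, and enumerate the members of $\mathfrak{A}_1 \cup \mathfrak{A}_2$ as $A_1, \ldots, A_l$. Applying Lemma \ref{adamo} to $F$ and $A_1, \ldots, A_l$ yields a finite set $\Xi \supseteq F$ such that $\mu := \Xi \cup \{A_1, \ldots, A_l\} \in \mathfrak{D}(\mathbb{A})$. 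Since $\Xi \supseteq \Xi_1 \cup \Xi_2$ and $\{A_1, \ldots, A_l\} \supseteq \mathfrak{A}_1 \cup \mathfrak{A}_2$, we get $\lambda_1 \cup \lambda_2 \subseteq \mu$, so $\mu$ is the desired common upper bound.

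The whole argument is short because Lemma \ref{adamo} has already done the hard combinatorial work of arranging the inequalities (\ref{barreto}) and (\ref{barreto2}) by progressively enlarging the rational part of the label. The main (rather mild) point to verify is that the output $\Xi$ of Lemma \ref{adamo} lives in $\mathfrak{B}(\mathbb{Q})$, as demanded by the definition of $\mathfrak{D}(\mathbb{A})$: this is ensured because $\mathfrak{B}(\mathbb{Q})$ is closed under finite unions with arbitrary finite sets of atoms (as a sublattice of $\mathfrak{B}_{\max}$ containing every $\mathbb{H}_m$), so enlarging $F \supseteq \Xi_i \supseteq \mathbb{H}_m$ by finitely many further atoms keeps us inside $\mathfrak{B}(\mathbb{Q})$.
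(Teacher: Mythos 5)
Your proof is correct and takes essentially the same route as the paper: for directedness you write $\lambda _{i}=\Xi ^{i}\cup \mathfrak{A}_{i}$, set $F=\Xi ^{1}\cup \Xi ^{2}$ and invoke Lemma \ref{adamo}, exactly as the paper does. Your handling of the inclusion $\mathfrak{D}\left( \mathbb{Q}\right) \subset \mathfrak{D}\left( \mathbb{A}\right) $ (taking $\Xi =\mathbb{H}_{n}$ and $\mathfrak{A}=\varnothing $, so that (\ref{barreto}) and (\ref{barreto2}) hold vacuously) is if anything more precise than the paper's one-line remark, and your closing check that the set $\Xi $ produced by Lemma \ref{adamo} indeed lies in $\mathfrak{B}(\mathbb{Q})$ (since $\Xi \supseteq F\supseteq \mathbb{H}_{m}$ and adding finitely many atoms keeps one inside $\mathfrak{B}_{\max }$) makes explicit a point the paper leaves tacit.
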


\begin{proof} Given $\mathbb{H}_{n}\in \mathfrak{D}\left( \mathbb{Q}%
\right) $, eventually, there is a set $A\in \mathfrak{A}$ such that $\mathbb{%
H}_{n}\in \mathfrak{D}\left( \mathbb{A}\right) $ and so $\mathfrak{D}\left( 
\mathbb{Q}\right) \subset \mathfrak{D}\left( \mathbb{A}\right) .$

For $i=1,2$ let 
\begin{equation*}
\lambda _{i}=\Xi ^{i}\cup \left\{ A_{1}^{i},\dots ,A_{l_{i}}^{i}\right\} \in 
\mathfrak{D}_{0}\left( \mathbb{A}\right) .
\end{equation*}%
We set 
\begin{equation*}
F=\Xi ^{1}\cup \Xi ^{2};
\end{equation*}%
then, by Lemma (\ref{adamo}), we can add points to $F$ and get a set $\Xi
\supset \Xi ^{1}\cup \Xi ^{2}$ so that (\ref{barreto}) and (\ref{barreto2})
are satisfied.\end{proof}

Using Lemma \ref{pietro}, we define the label set 
\begin{equation}
\mathfrak{B}(\mathbb{A}):=\overline{\mathfrak{D}\left( \mathbb{A}\right) }.
\label{sa}
\end{equation}

\subsection{Cardinal numbers and numerosities}

A property that is natural to expect, when one has a numerosity theory for
all sets in $\Lambda $, is that it must be coherent with cardinalities,
namely it must satisfy the following property:\newline

\textbf{Cantor property:} if $A,B\subset \Lambda \backslash \mathbb{A}$ then%
\begin{equation}
\left\vert A\right\vert <\left\vert B\right\vert \Rightarrow \mathfrak{num}%
\left( A\right) <\mathfrak{num}\left( B\right) .  \label{CP}
\end{equation}

Using the labelling $\mathfrak{B}(\mathbb{A})$ defined by (\ref{sa}), the
following result holds:

\begin{theorem}
If $A,B\subset \mathbb{A}$ then%
\begin{equation*}
\left\vert A\right\vert <\left\vert B\right\vert \Rightarrow \mathfrak{num}%
\left( A\right) <\mathfrak{num}\left( B\right).
\end{equation*}
\end{theorem}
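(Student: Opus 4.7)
My plan is to apply a strict version of monotonicity for the $\Lambda$-limit: it is enough to exhibit a label $\mathfrak{s}_0\in\mathfrak{B}(\mathbb{A})$ such that $|A\cap\mathfrak{s}|<|B\cap\mathfrak{s}|$ for every $\mathfrak{s}\in\mathfrak{B}(\mathbb{A})$ with $\mathfrak{s}\sqsupseteq\mathfrak{s}_0$, since then by the Labels Principle (Theorem~\ref{1}(iii)), the fineness of the ultrafilter, and the ultrafilter description of the order on $\mathbb{E}$, we get $\mathfrak{num}(A)<\mathfrak{num}(B)$. Since $A,B\subset\mathbb{A}$ consist of atoms, writing $\mathfrak{s}=\Xi\cup\mathfrak{A}$ with $\Xi$ the atomic part and $\mathfrak{A}$ the set part, we have $A\cap\mathfrak{s}=A\cap\Xi$ and likewise for $B$, so the whole argument reduces to controlling $|A\cap\Xi|$ and $|B\cap\Xi|$.

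First I invoke Lemma~\ref{adamo} with $F=\varnothing$ applied to the three sets $A$, $B$, $\mathbb{Q}$ to produce a base label $\mathfrak{s}_0=\Xi_0\cup\mathfrak{A}_0\in\mathfrak{D}(\mathbb{A})$ with $A,B,\mathbb{Q}\in\mathfrak{A}_0$, so that constraints (\ref{barreto})--(\ref{barreto2}) apply to these three sets in every $\mathfrak{D}(\mathbb{A})$-extension of $\mathfrak{s}_0$. Then I split on the cardinality of $A$. If $A$ is finite, I enlarge $\mathfrak{s}_0$ so that $A\subseteq\Xi_0$; then $|A\cap\Xi|=|A|$ while $|B\cap\Xi|$ eventually exceeds $|A|$, whether $B$ is finite with $|B|>|A|$ (just take $B\subseteq\Xi$) or $B$ is infinite. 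If $A$ is countably infinite, then $|B|>\aleph_{0}$ and $A\equiv\mathbb{Q}$ in the ordering~(\ref{San Gennaro aiutaci tu}), so (\ref{barreto}) gives $|A\cap\Xi|\le|\mathbb{Q}\cap\Xi|$ while (\ref{barreto2}) gives $|B\cap\Xi|>|\mathbb{Q}\cap\Xi|^{2}$; enlarging $\mathfrak{s}_0$ so that $|\mathbb{Q}\cap\Xi_0|\ge 2$ forces $|\mathbb{Q}\cap\Xi|^{2}>|\mathbb{Q}\cap\Xi|\ge|A\cap\Xi|$, hence $|B\cap\Xi|>|A\cap\Xi|$.

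The delicate case is when both $A$ and $B$ are uncountable with $|A|<|B|$: here (\ref{barreto})--(\ref{barreto2}) alone only give $\le$, and the strict inequality must be pulled out of the inductive construction inside Lemma~\ref{adamo}. In that construction one first reduces to the disjoint situation by partitioning into finer disjoint pieces $B_1,\dots,B_m$ that refine $\{A,B,\mathbb{Q},\dots\}$; in the step that processes the piece coming from $B$ (after the piece from $A$), one adds a finite set $F\subset B\setminus(A\cup\Xi_{\text{prev}})$ with $|F|>|A\cap\Xi_{\text{prev}}|$, so $|B\cap\Xi|\ge|F|>|A\cap\Xi_{\text{prev}}|=|A\cap\Xi|$ (the equality holds because subsequent $F$'s, drawn from disjoint pieces, add nothing to $A\cap\Xi$). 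Summing the strict inequalities on the disjoint pieces of $A$ and $B$ yields a strict inequality on $A$ and $B$ themselves, and this gap is preserved by all further $\mathfrak{D}(\mathbb{A})$-extensions by the same disjointness argument.

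Finally, to pass from $\mathfrak{D}(\mathbb{A})$-labels to arbitrary $\mathfrak{s}\in\mathfrak{B}(\mathbb{A})=\overline{\mathfrak{D}(\mathbb{A})}$, I apply Lemma~\ref{B1}: every such $\mathfrak{s}$ decomposes as $\mathfrak{s}=\mathfrak{s}'\cup\mathfrak{u}$ with $\mathfrak{s}'$ built from elements of $\mathfrak{D}(\mathbb{A})$ and $\mathfrak{u}$ disjoint from every $\sigma\in\mathfrak{D}(\mathbb{A})$. For each atom $a\in A$, applying Lemma~\ref{adamo} to $F=\{a\}$ produces $\sigma\in\mathfrak{D}(\mathbb{A})$ with $a\in\sigma$, so $a\notin\mathfrak{u}$; hence $A\cap\mathfrak{u}=\varnothing$ and likewise $B\cap\mathfrak{u}=\varnothing$, giving $|A\cap\mathfrak{s}|=|A\cap\mathfrak{s}'|$ and $|B\cap\mathfrak{s}|=|B\cap\mathfrak{s}'|$, so the strict inequality transfers. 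The main technical obstacle throughout is the uncountable-uncountable case: coaxing strict inequality from conditions that only stipulate $\le$ requires opening up Lemma~\ref{adamo} and carefully tracking which $F$'s hit which $A_k$.
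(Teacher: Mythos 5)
Your overall scheme is the same as the paper's (fix a base label whose set part contains $A$ and $B$, compare $|A\cap \Xi|$ with $|B\cap \Xi|$, take the $\Lambda$-limit), and your observation that (\ref{barreto}) as literally stated yields only $\leq$ is accurate. But your repair of the crucial uncountable--uncountable case does not work. Lemma \ref{adamo} is purely an existence statement: it constructs \emph{one} label of $\mathfrak{D}\left( \mathbb{A}\right) $ containing a prescribed finite set. Your argument, however, needs $|A\cap \Xi |<|B\cap \Xi |$ for \emph{every} label $\mathfrak{s}\sqsupseteq \mathfrak{s}_{0}$, and a generic $\lambda =\Xi \cup \mathfrak{A}\in \mathfrak{D}\left( \mathbb{A}\right) $ with $A,B\in \mathfrak{A}$ is only required to satisfy (\ref{barreto}) and (\ref{barreto2}); nothing forces it to arise from the inductive construction you open up, so "the gap is preserved by all further extensions" is false. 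Concretely, for disjoint uncountable $A,B$ with $\left\vert A\right\vert <\left\vert B\right\vert $, above any base label with $|A\cap \Xi _{0}|<|B\cap \Xi _{0}|$ one can add finitely many points of $A$ to the atomic part (rebalancing the finitely many other constraints as needed) and obtain a legitimate label with $|A\cap \Xi |=|B\cap \Xi |$: equality is fully compatible with (\ref{barreto}), which demands only $\leq $, and with (\ref{barreto2}). Hence strict inequality holds only on a cofinal, not an eventual, family of labels; monotonicity of the $\Lambda $-limit then gives only $\mathfrak{num}\left( A\right) \leq \mathfrak{num}\left( B\right) $, and strictness would depend on the choice of the ultrafilter --- exactly what the theorem is meant to exclude. (Your finite and countable-versus-uncountable cases are sound, because there you use the genuinely strict condition (\ref{barreto2}) or an exact count of a finite set.)

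For comparison, the paper's own one-line proof silently reads (\ref{barreto}) as strict: it cites (\ref{barreto}) to conclude $\left\vert A\cap \Xi \right\vert <\left\vert B\cap \Xi \right\vert $ for any label whose set part contains $A$ and $B$. The correct repair is therefore at the level of the \emph{definition} of $\mathfrak{D}\left( \mathbb{A}\right) $, not inside Lemma \ref{adamo}: strengthen (\ref{barreto}) to require $|A\cap \Xi |<|B\cap \Xi |$ whenever $A\sqsubseteq B$ but $B\not\sqsubseteq A$. The construction of Lemma \ref{adamo} already produces such labels (each $F_{k+1}$ is chosen with $\left\vert F_{k+1}\right\vert >\left\vert A_{k}\cap \Xi _{k}\right\vert $), so directedness survives, and with this reading your three-way case split becomes unnecessary: a single strict application of (\ref{barreto}) handles all cardinalities at once, which is the paper's argument. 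Your final reduction from $\mathfrak{B}(\mathbb{A})=\overline{\mathfrak{D}\left( \mathbb{A}\right) }$ to $\mathfrak{D}\left( \mathbb{A}\right) $ via Lemma \ref{B1} is acceptable as far as the paper's statements go, though you should note it leans on the lemma's strong claim that the remainder $\mathfrak{t}$ is disjoint from \emph{every} $\sigma \in \mathfrak{D}$, a point the paper's own proof of the theorem does not even address.
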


\begin{proof} Given two sets $A,B\subset \mathbb{A}$ with $\left\vert
A\right\vert <\left\vert B\right\vert $, we take a label $\lambda \supseteq
\lambda _{0}:=\Xi \cup \left\{ A,B\right\} \in \mathfrak{B}(\mathbb{A}).$
Then, by (\ref{barreto}) 
\begin{equation*}
\left\vert A\cap \lambda \right\vert =\left\vert A\cap \Xi \right\vert
<\left\vert B\cap \Xi \right\vert =\left\vert B\cap \lambda \right\vert .
\end{equation*}%
The conclusion follows taking the $\Lambda $-limit
\end{proof}

By Theorem \ref{1}, it follows that the numerosity function is well defined
for every set belonging to the family 
\begin{equation*}
K=\left\{ E\in \mathbb{V}\left( F\right) \ |\ F\in \wp _{fin}\left( \mathbb{A%
}\right) \right\},
\end{equation*}%
since $\mathfrak{D}(\mathbb{A})$ provides a label to the elements of $K.$ In
particular, by the Comparison Principle (Theorem \ref{1}.(iv)) and (\ref{psi}%
), we have that for every set $E\subset \mathbb{R}^{N},$ 
\begin{equation*}
\mathfrak{num}\left( E\right) =\mathfrak{num}\left[ \Psi \left( E\right) %
\right] .
\end{equation*}

Now, we want to extend the notion of numerosity to any set $A$ in $\Lambda $
in such a way that the Cantor property (\ref{CP}) be satisfied. The simplest
way to realize this task is to consider the family of infinite sets 
\begin{equation*}
\mathbb{S}:=\Lambda \backslash \left( \mathfrak{L}\cup \mathbb{A}\right)
\end{equation*}%
and to assign a label to each of them. We can take an injective map 
\begin{equation*}
\Phi :\mathbb{S\rightarrow A}
\end{equation*}%
and set 
\begin{equation*}
\ell (A)=\ell (\Phi \left( A\right) ).
\end{equation*}%
Then, every set in $\Lambda \backslash \mathbb{A}$ has a label in $\mathfrak{%
B}(\mathbb{A})$. By the Comparison Principle (Theorem \ref{1}.(iv)), we get
our desired final result:

\begin{theorem}
If $A,B\in \Lambda \backslash \mathbb{A}$, then%
\begin{equation*}
\left\vert A\right\vert <\left\vert B\right\vert \Rightarrow \mathfrak{num}%
\left( A\right) <\mathfrak{num}\left( B\right).
\end{equation*}
\end{theorem}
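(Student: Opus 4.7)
The plan is to reduce the statement to the preceding theorem for subsets of $\mathbb{A}$, via the Comparison Principle (Theorem \ref{1}.(iv)). Given $A, B \in \Lambda \setminus \mathbb{A}$ with $|A| < |B|$, I would construct sets of atoms $A^{\#}, B^{\#} \subseteq \mathbb{A}$ with $|A^{\#}| = |A|$ and $|B^{\#}| = |B|$, together with label-preserving bijections $\psi_A : A \to A^{\#}$ and $\psi_B : B \to B^{\#}$. Then the Comparison Principle yields $\mathfrak{num}(A) = \mathfrak{num}(A^{\#})$ and $\mathfrak{num}(B) = \mathfrak{num}(B^{\#})$, while the preceding theorem applied to $A^{\#}, B^{\#} \subseteq \mathbb{A}$ gives $\mathfrak{num}(A^{\#}) < \mathfrak{num}(B^{\#})$, which is the desired conclusion.

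To construct $\psi_C$ (for $C \in \{A, B\}$), I would proceed element by element. For $x \in C \cap \mathbb{A}$, set $\psi_C(x) := x$, which trivially preserves labels. For $x \in C \cap \mathbb{S}$ (infinite sets), set $\psi_C(x) := \Phi(x)$; by the very definition of the extended labelling $\ell(x) = \ell(\Phi(x))$, labels are preserved. For $x \in C \cap \mathfrak{L}$ (finite sets), pick an atom $\psi_C(x) \in \mathbb{A}$ having the prescribed label $\ell(x)$. Injectivity across the whole of $C$ is then arranged by choosing atoms in the third case distinct from those already used in the first two, which is possible because $\mathbb{A}$ has cardinality $\kappa$ and each label in $\mathfrak{B}(\mathbb{A})$ is realised by many atoms (a label $\mathfrak{s}$ contains, for instance, cofinally many atoms added when building $\mathfrak{s}$ via Lemma \ref{adamo}).

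The main obstacle is precisely this finite-set case: one must produce a label-preserving injection from the collection of finite sets occurring as elements of $A$ or $B$ into $\mathbb{A}$, compatible with the identity on $\mathbb{A}\cap(A\cup B)$ and with $\Phi$ on $\mathbb{S}\cap(A\cup B)$. This amounts to extending $\Phi$ from $\mathbb{S}$ to all of $\Lambda \setminus \mathbb{A}$ in a label-respecting manner, and uses the richness of $\mathbb{A}$ relative to the countability of $\mathbb{V}(\mathfrak{s})$ for each individual label $\mathfrak{s}$ (Proposition \ref{eccola}.(i)). Once this technical point is handled, the rest of the argument is a direct combination of the Comparison Principle with the preceding theorem, and no new analysis of the underlying labelling $\sqsubseteq$ is required.
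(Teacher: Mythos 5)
Your skeleton coincides with the paper's own (very terse) argument: the paper's proof consists precisely of relabelling the infinite sets $\mathbb{S}=\Lambda \setminus (\mathfrak{L}\cup \mathbb{A})$ via an injection $\Phi :\mathbb{S}\rightarrow \mathbb{A}$ with $\ell (x)=\ell (\Phi (x))$, and then invoking the Comparison Principle (Theorem \ref{1}.(iv)) together with the preceding theorem for subsets of $\mathbb{A}$; your cases $x\in C\cap \mathbb{A}$ and $x\in C\cap \mathbb{S}$ are exactly this. The genuine gap is in your third case, the one the paper passes over in silence: for $x\in C\cap \mathfrak{L}$ you cannot in general ``pick an atom $\psi _{C}(x)$ having the prescribed label $\ell (x)$''. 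For every atom $a$ one has $a\in \ell (a)$: since atoms enter a superstructure only at level $0$, $a\in \mathbb{V}(\mu )\Leftrightarrow a\in \mu $, so every $\mu \in I_{a}$ contains $a$ and hence so does $\ell (a)=\bigcap_{\mu \in I_{a}}\mu $. Consequently an atom whose label \emph{equals} a given finite set $\mathfrak{s}$ must be an element of $\mathfrak{s}$, so there are at most $\left\vert \mathfrak{s}\right\vert <\aleph _{0}$ such atoms, and there may be none: if $a,b$ are atoms with $b\notin \mathbb{V}(\ell (a))$ and $a\notin \mathbb{V}(\ell (b))$, then $\ell (\{a,b\})=\ell (a)\cup \ell (b)$ (Proposition \ref{pro}.(viii)) is the label of no atom at all. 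Your justification --- that a label contains ``cofinally many atoms'' --- conflates atoms lying in $\mathbb{V}(\mathfrak{s})$, i.e.\ atoms with label \emph{contained in} $\mathfrak{s}$, with atoms whose label is \emph{exactly} $\mathfrak{s}$, and the same confusion undermines your injectivity fix.

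Moreover the failure is not of your particular choice but of the strategy: for sets with many finite-set elements no label-preserving injection into $\mathbb{A}$ can exist, by counting. Take $C=\mathbb{Q}\times \mathbb{Q}$. With the labelling $\mathfrak{B}(\mathbb{A})$ one has $\ell (q)\cap \mathbb{Q}=\mathbb{H}_{n(q)}$ and $\ell ((p,q))=\ell (p)\cup \ell (q)$, so the number of pairs whose label restricted to $\mathbb{Q}$ is exactly $\mathbb{H}_{n}$ grows like $\left\vert \mathbb{H}_{n}\right\vert ^{2}\sim 4n^{4}$, whereas the atoms carrying such a label lie in $\mathbb{H}_{n}$ and hence number at most $\left\vert \mathbb{H}_{n}\right\vert \sim 2n^{2}$; the pigeonhole principle then forbids any injection $\psi _{C}:C\rightarrow \mathbb{A}$ with $\ell (\psi _{C}(x))=\ell (x)$. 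So your reduction works exactly in the cases the paper's one-line proof already covers (elements that are atoms or infinite sets, handled by the identity and by $\Phi $), and breaks precisely where the paper is silent; treating sets whose elements are finite sets (e.g.\ sets of tuples) requires a different idea --- either a direct eventual comparison of $\left\vert A_{\lambda }\right\vert $ and $\left\vert B_{\lambda }\right\vert $ exploiting a strengthened form of condition (\ref{barreto2}) that dominates all powers $\left\vert \mathbb{Q}\cap \Xi \right\vert ^{k}$, not just the square --- rather than a label-preserving bijection onto a set of atoms.
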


\section{Numerosity and measures\label{M}}

\subsection{The general theory}

Given a numerosity theory and a set $E\in \Lambda $, we put%
\begin{equation*}
\mu _{\gamma }(E)=st\left( \frac{\mathfrak{num}\left( E\right) }{\mathfrak{%
\gamma }}\right),
\end{equation*}%
where $\gamma\in\mathbb{N}^{\ast }$. $\mu _{\gamma }$ is called numerosity
measure. As we will see, an interesting case occurs if you take $\mathfrak{%
\gamma =num}\left( \left[ 0,1\right) \right) ^{d}\ $with $d\in \mathbb{R}%
_{\geq 0}$. In this case we will say that $\mu _{\gamma }$ is the canonical $%
d$-dimensional numerosity measure.

\begin{theorem}
\label{sc}The numerosity measure $\mu _{\gamma }$ satisfies the following
properties:

\begin{enumerate}
\item[(i)] it is finitely additive: for all sets $A,B$ 
\begin{equation*}
\mu _{\gamma }\left( A\cup B\right) =\mu _{\gamma }\left( A\right) +\mu
_{\gamma }\left( B\right) -\mu _{\gamma }\left( A\cap B\right) ;
\end{equation*}

\item[(ii)] it is superadditive, namely given a denumerable partition $%
\left\{ A_{n}\right\} _{n\in \mathbb{N}}$ of a set $A\subset \mathbb{R}$,
then 
\begin{equation*}
\mu _{\gamma }\left( A\right) \geq \sum_{n=0}^{\infty }\mu _{\gamma }\left(
A_{n}\right).
\end{equation*}
\end{enumerate}
\end{theorem}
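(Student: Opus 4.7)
My plan is to handle the two statements essentially independently, relying on the properties of $\mathfrak{num}$ proved in Theorem \ref{1} together with the fact that the standard part $st:\mathbb{K}\to\mathbb{R}\cup\{\pm\infty\}$ is monotone and is additive on its finite domain. Part (i) is a direct consequence of the Sum Principle, and part (ii) is obtained from (i) together with Euclid's principle by approximating the countable sum from below.

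For (i), I would decompose $A\cup B$ into the three disjoint pieces $A\setminus B$, $B\setminus A$, $A\cap B$ and apply the Sum Principle of Theorem \ref{1}.(v) three times: once to $A\cup B$, once to $A=(A\setminus B)\sqcup(A\cap B)$, and once to $B=(B\setminus A)\sqcup(A\cap B)$. Eliminating the numerosities of $A\setminus B$ and $B\setminus A$ yields the identity
\begin{equation*}
\mathfrak{num}(A\cup B)=\mathfrak{num}(A)+\mathfrak{num}(B)-\mathfrak{num}(A\cap B)
\end{equation*}
in $\mathbb{E}$. Dividing by $\gamma$ and applying $st$ (which is a ring homomorphism on the finite elements of $\mathbb{E}$) then produces the desired equality for $\mu_\gamma$. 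A brief comment is needed for the case when one of the terms is infinite: both sides become $+\infty$ by the monotonicity of $st$ and Euclid's principle.

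For (ii), given a countable disjoint partition $A=\bigsqcup_{n\in\mathbb{N}}A_n$, I would first fix an arbitrary $N\in\mathbb{N}$. Applying the Sum Principle inductively gives $\mathfrak{num}\bigl(\bigsqcup_{n=0}^{N}A_n\bigr)=\sum_{n=0}^{N}\mathfrak{num}(A_n)$, and since $\bigsqcup_{n=0}^{N}A_n\subseteq A$, Euclid's principle (Theorem \ref{1}.(ii)) yields $\mathfrak{num}(A)\geq\sum_{n=0}^{N}\mathfrak{num}(A_n)$. Dividing by $\gamma$ and applying $st$, which is monotone and additive on finite sums, gives
\begin{equation*}
\mu_\gamma(A)\geq \sum_{n=0}^{N}\mu_\gamma(A_n).
\end{equation*}
Since the partial sums of the non-negative series $\sum\mu_\gamma(A_n)$ are bounded above by $\mu_\gamma(A)$, the series converges in $[0,+\infty]$, and letting $N\to\infty$ preserves the inequality.

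The only delicate point — and the closest thing to an obstacle — is that one cannot upgrade (ii) to an equality: the $\Lambda$-limit does not commute with countable sums, so $\mathfrak{num}(A)$ and $\sum_n\mathfrak{num}(A_n)$ need not coincide even when each $A_n$ is finite (e.g.\ $\mathbb{N}=\bigsqcup\{n\}$). This is precisely why the argument uses finite truncations and appeals to Euclid's principle to pass the inequality through $st$, rather than trying to evaluate a countable sum inside $\mathbb{E}$. Beyond this, the proof is entirely bookkeeping with the principles of Theorem \ref{1} and the standard-part map.
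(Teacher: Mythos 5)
Your proof is correct and follows essentially the same route as the paper's: part (i) is the same direct consequence of the Sum Principle (the paper dismisses it as a trivial consequence of additivity), and part (ii) uses the identical finite-truncation argument, bounding $\sum_{n=0}^{N}\mathfrak{num}\left( A_{n}\right)$ by $\mathfrak{num}\left( A\right)$, dividing by $\gamma$, taking standard parts, and passing to the limit $N\rightarrow \infty$. Your explicit handling of the infinite cases for $st$ is a small refinement that the paper leaves implicit.
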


\begin{proof} (i) This is a trivial consequence of the additivity of the
numerosity.

(ii) By Theorem \ref{1}, we have that for all $N\in \mathbb{N}$, 
\begin{equation*}
\mathfrak{num}\left( A\right) \geq \mathfrak{num}\left(
\dbigcup\limits_{n=0}^{N}A_{n}\right) =\sum_{n=0}^{N}\mathfrak{num}\left(
A_{n}\right) ,
\end{equation*}%
hence 
\begin{equation*}
st\left( \frac{\mathfrak{num}\left( A\right) }{\gamma }\right) \geq st\left(
\sum_{n=0}^{N}\frac{\mathfrak{num}\left( A_{n}\right) }{\gamma }\right)
=\sum_{n=0}^{N}st\left( \frac{\mathfrak{num}\left( A_{n}\right) }{\gamma }%
\right) ;
\end{equation*}%
therefore, 
\begin{equation*}
\mu _{\gamma }\left( A\right) \geq \sum_{n=0}^{N}\mu _{\gamma }\left(
A_{n}\right) .
\end{equation*}%
The conclusion follows taking the Cauchy limit in the above inequality for $%
N\rightarrow \infty $.\end{proof}

\subsection{Numerosity of the subsets of $\mathbb{R}^{N}$}

In this section, we will show that $\mu _{\beta }$ agrees with the Lebesgue
measure, namely, if $E$ is a Lebesgue measurable set, then%
\begin{equation}
m_{L}\left( E\right) =\mu _{\beta }\left( E\right) =st\left( \frac{\mathfrak{%
num}\left( E\right) }{\beta }\right),  \label{nana}
\end{equation}%
where%
\begin{equation}
\beta :=\mathfrak{num}\left( \left[ 0,1\right) \right). \label{nana1}
\end{equation}

First, let we show that this holds for intervals:

\begin{theorem}
\label{intervalli} The numerosity measure $\mu_{\gamma}$ is translation
invariant for any $\gamma\in\mathbb{N}^{\ast }$. In particular, if $%
\gamma=\beta$ then for any $\varepsilon=\frac{a}{b}\in[0,1)$ we have that $%
\mu_{\beta}\left([0,\frac{a}{b})\right)=\frac{a}{b}$.
\end{theorem}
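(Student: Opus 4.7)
The proof naturally splits into two parts: (1) establishing translation invariance of $\mu_\gamma$, and (2) deducing the value $\mu_\beta([0,a/b))=a/b$ from translation invariance plus finite additivity.

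For translation invariance, fix $r \in \mathbb{R}$ (it suffices to handle the rational case for the ``in particular'' part, and I would prove that case first). The natural attempt is to use the Comparison Principle (Theorem \ref{1}.(iv)) via the bijection $\Phi\colon E \to E+r$, $x \mapsto x+r$; this doesn't preserve labels exactly, but we can show that the difference is small. Concretely, $|(E+r)\cap\lambda| = |E\cap(\lambda-r)|$, so I would compare $|E\cap\lambda|$ with $|E\cap(\lambda-r)|$ for $\lambda = \Xi \cup \mathfrak{A}$ cofinal in $\mathfrak{B}(\mathbb{A})$. For rational $r = p/q$ and $\Xi = \mathbb{H}_n$ with $q \mid n$, a direct count shows $|\mathbb{H}_n \triangle (\mathbb{H}_n - r)| = O(n)$ while $|\mathbb{H}_n| = O(n^2)$, so the symmetric difference is of lower order than $\Xi$ itself. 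Since $\mathfrak{num}(\mathbb{H}_n) \to $ a quantity comparable to $\beta$ (more generally, the numerosities in play are at least of this order), the discrepancy $\mathfrak{num}(E+r) - \mathfrak{num}(E)$ becomes infinitesimal once divided by $\gamma \in \mathbb{N}^{*}$, yielding $\mu_\gamma(E+r) = \mu_\gamma(E)$.

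For the particular computation $\mu_\beta([0,a/b)) = a/b$, I would use the disjoint partition
\begin{equation*}
[0,1) = \bigsqcup_{k=0}^{b-1}\left[\tfrac{k}{b},\tfrac{k+1}{b}\right).
\end{equation*}
By translation invariance (applied to rational $r = k/b$), each piece $[k/b,(k+1)/b)$ has the same $\mu_\beta$-measure as $[0,1/b)$. Applying finite additivity (Theorem \ref{sc}.(i), which itself comes directly from the Sum Principle) and using $\mu_\beta([0,1)) = st(\beta/\beta) = 1$, I obtain $b \cdot \mu_\beta([0,1/b)) = 1$, hence $\mu_\beta([0,1/b)) = 1/b$. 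Then, writing $[0,a/b) = \bigsqcup_{k=0}^{a-1}[k/b,(k+1)/b)$ and applying finite additivity and translation invariance once more,
\begin{equation*}
\mu_\beta\!\left([0,\tfrac{a}{b})\right) = \sum_{k=0}^{a-1}\mu_\beta\!\left([\tfrac{k}{b},\tfrac{k+1}{b})\right) = \frac{a}{b}.
\end{equation*}

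The main obstacle is the proof of translation invariance itself: one must verify that the labelling $\mathfrak{B}(\mathbb{A})$ constructed in Section \ref{SL} really does yield a cofinal family of $\lambda$'s along which $|E \cap \lambda|$ and $|E \cap (\lambda-r)|$ differ by an amount of strictly lower order than $\gamma$. For $E \subseteq \mathbb{Q}$ this is the $\mathbb{H}_n$-boundary estimate above, but for $E$ containing irrationals one has to handle the portion of $\lambda$ coming from $\mathfrak{A}$ (the sets-of-atoms component); the rational bound, combined with additivity and a decomposition of $E$ into its rational part and its irrational part, should suffice since the relevant intervals have their rational counts dominate, but the bookkeeping here is the delicate step.
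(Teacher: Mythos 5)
Your second half is fine and is exactly the paper's argument: partition $[0,1)$ into the $b$ translates of $[0,\tfrac{1}{b})$, use finite additivity (Theorem \ref{sc}) together with $\mu_{\beta}\left([0,1)\right)=1$, then do the same for $[0,\tfrac{a}{b})$. The gap is in your first half. In this paper translation invariance is not an asymptotic fact to be estimated from grid counts; it is hard-wired into the labelling of Section \ref{SL}. Since cardinality and the normalized Hausdorff measures $\mathcal{H}_{d}$ are translation invariant, one has $E\equiv E+r$ in the order $\sqsubseteq$, and then condition (\ref{barreto}) forces, for every label $\lambda =\Xi \cup \mathfrak{A}$ whose $\mathfrak{A}$-component contains both $E$ and $E+r$ (such labels are cofinal by Lemma \ref{adamo} plus fineness of the ultrafilter), the \emph{exact} equality $\left\vert E\cap \Xi \right\vert =\left\vert (E+r)\cap \Xi \right\vert$. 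Taking the $\Lambda$-limit gives $\mathfrak{num}(E)=\mathfrak{num}(E+r)$ on the nose, whence invariance of $\mu _{\gamma }$ for \emph{every} $\gamma$ simultaneously, with no estimates at all. This structural mechanism is the missing idea in your proposal.

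Your estimate-based route fails concretely in three ways. First, the theorem quantifies over all $\gamma \in \mathbb{N}^{\ast }$, including finite $\gamma$ and $\gamma$ comparable to $\mathfrak{num}(E)$: your bound $\left\vert \mathbb{H}_{n}\triangle (\mathbb{H}_{n}-r)\right\vert =O(n)$ allows a discrepancy whose $\Lambda$-limit is of order $\alpha$, which is not infinitesimal after dividing by an arbitrary $\gamma$ (e.g.\ take a sparse $E\subseteq \mathbb{Q}$ with $\mathfrak{num}(E)$ of order at most $\alpha$ and $\gamma =\mathfrak{num}(E)$); since numerosities are hypernatural, invariance at every scale essentially requires the discrepancy to vanish, which "lower order than $\left\vert \Xi \right\vert$" does not give. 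Second, your claim that "the rational counts dominate" is backwards: condition (\ref{barreto2}) forces $\left\vert \Xi \cap A\right\vert >\left\vert \mathbb{Q}\cap \Xi \right\vert ^{2}$ for every uncountable $A\in \mathfrak{A}$, so for a set of positive measure almost all points of $E\cap \Xi$ are irrational, and that dominant part of the count is governed by (\ref{barreto}), about which your rational-grid estimate says nothing. Third, $\lambda -r$ is not a label, so comparing the nets $\left\vert E\cap \lambda \right\vert$ and $\left\vert E\cap (\lambda -r)\right\vert$ along $\lambda$ presupposes a near-translation-invariance of the labelling — the very thing to be proved — and is circular unless one invokes the order condition (\ref{barreto}) as the paper does.
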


\begin{proof} Let $r\in\mathbb{R}, E\subseteq\mathbb{R}$. By Property \ref{barreto}, as $E\equiv r+E$ (in the sense of the ordering $\sqsubseteq$), we have that for every $\lambda\in\mathfrak{B}\left(\mathbb{A}\right), \lambda=\Xi\cup\mathfrak{A}$ necessarily $|E\cap\Xi|=|(E+r)\cap\Xi|$. By taking the $\Lambda$-limit, we get our first claim.

As for the second, we just have to observe that $[0,1)=[0,\frac{1}{b})\cup[\frac{1}{b},\frac{2}{b})\dots\cup[\frac{b-1}{b},1)$, so by finite additivity and translation invariance we get $\mu_{\beta}\left([0,\frac{1}{b})\right)=\frac{1}{b}$, and the thesis follows as, similarly, $[0,\frac{a}{b})=[0,\frac{1}{b})\cup[\frac{1}{b},\frac{2}{b})\dots\cup[\frac{a-1}{b},1)$.\end{proof}

Moreover, we have the following property:

\begin{proposition}
\label{atto di fede} The numerosity measure $\mu _{\beta }$ is subadditive
on the $\sigma$-algebra of Lebesgue measurable sets.
\end{proposition}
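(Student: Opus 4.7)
The objective is countable subadditivity: for any countable family $\{A_n\}_{n \in \mathbb{N}}$ of Lebesgue measurable sets, $\mu_\beta(\bigcup_n A_n) \leq \sum_n \mu_\beta(A_n)$. My plan is to establish the stronger statement $\mu_\beta(E) = m_L(E)$ for every bounded Lebesgue measurable $E$, whence the desired inequality transfers from the corresponding property of $m_L$.

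The central step is the key lemma: for any bounded open $U \subseteq \mathbb{R}$, $\mu_\beta(U) = m_L(U)$. Write $U = \bigsqcup_{k=1}^\infty I_k$ as a disjoint union of open intervals. By finite additivity and Theorem \ref{intervalli}, $\mu_\beta(\bigsqcup_{k \leq N} I_k) = \sum_{k \leq N} \ell(I_k)$, so monotonicity immediately gives $\mu_\beta(U) \geq m_L(U)$. For the reverse inequality, I bound the tail $V_N := \bigsqcup_{k > N} I_k$ via its closure $\overline{V_N}$, which is compact (since $U$ is bounded) and satisfies $m_L(\overline{V_N}) = m_L(V_N)$. By compactness, $\overline{V_N}$ admits a finite cover by open intervals of total length below $m_L(V_N) + \varepsilon$, and finite subadditivity then yields $\mu_\beta(V_N) \leq \mu_\beta(\overline{V_N}) < m_L(V_N) + \varepsilon \to 0$ as $N \to \infty$, so $\mu_\beta(U) = \sum_k \ell(I_k) = m_L(U)$.

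With the key lemma in hand, outer regularity of $m_L$ yields $\mu_\beta(E) \leq m_L(E)$ for bounded measurable $E$ (pick open $U \supseteq E$ with $m_L(U) < m_L(E) + \varepsilon$ and use monotonicity), while inner regularity supplies the reverse: for compact $K \subseteq E$ inside a closed interval $I$, write $\mu_\beta(K) = \mu_\beta(I) - \mu_\beta(I \setminus K) \geq m_L(K)$ via the key lemma applied to the bounded open set $I \setminus K$. Slicing $\mathbb{R} = \bigsqcup_{k \in \mathbb{Z}} [k, k+1)$ and invoking the superadditivity of Theorem \ref{sc} extends the identity $\mu_\beta = m_L$ to the whole $\sigma$-algebra, after which the countable subadditivity of $\mu_\beta$ is inherited from that of $m_L$.

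The main obstacle is the key lemma, which passes from finite subadditivity (immediate from Theorem \ref{sc}) to countable subadditivity on open sets. This transition genuinely requires the compactness of the closure of a bounded open set in order to control the tail of its disjoint-interval decomposition, and it is here that the proof ceases to be purely formal.
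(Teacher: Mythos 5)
Your reduction of countable subadditivity to the key lemma is reasonable in outline, but the proof of the key lemma contains a genuine error at the step $m_{L}\left( \overline{V_{N}}\right) =m_{L}\left( V_{N}\right) $. The closure of an infinite disjoint union of open intervals is not the union plus countably many endpoints: it also contains all accumulation points, and these can carry positive measure. Concretely, let $K\subset \lbrack 0,1]$ be a fat Cantor set with $m_{L}(K)=1/2$ and let $U=(0,1)\backslash K=\bigcup_{k}I_{k}$ be its decomposition into disjoint open intervals. Then $m_{L}(V_{N})\rightarrow 0$, but every $x\in K$ which is not an endpoint of one of $I_{1},\dots ,I_{N}$ has a neighbourhood avoiding $\overline{I_{1}}\cup \dots \cup \overline{I_{N}}$, and since $K$ is nowhere dense this neighbourhood must meet some $I_{k}$ with $k>N$; hence $\overline{V_{N}}$ contains all of $K$ except at most $2N$ points, and $m_{L}\left( \overline{V_{N}}\right) \geq 1/2$ for every $N$. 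Your compactness argument therefore yields only $\mu _{\beta }(V_{N})\leq m_{L}\left( \overline{V_{N}}\right) +\varepsilon $, which does not tend to zero, so the inequality $\mu _{\beta }(U)\leq m_{L}(U)$ is not established, and the subsequent inner-regularity step (which applies the key lemma to $I\backslash K$, an open set of exactly this problematic kind) collapses with it.

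The gap is not reparable with the tools you allow yourself: your argument invokes only finite additivity, monotonicity, translation invariance and normalization of $\mu _{\beta }$, and these properties do not determine the value on open sets. Indeed, there exist finitely additive, translation-invariant, normalized set functions on $\wp (\mathbb{R})$ --- invariant extensions of the Jordan content, available because finite unions of translates of a nowhere dense closed set remain nowhere dense --- which vanish on a fat Cantor set $K$ and hence assign to $U$ above the value $1\neq m_{L}(U)=1/2$. Any correct proof must use something specific to the construction, and the paper's proof does exactly this: the label set $\mathfrak{D}\left( \mathbb{A}\right) $ is built so that, by the ordering (\ref{San Gennaro aiutaci tu}) together with condition (\ref{barreto}), an inequality between the (Hausdorff, hence Lebesgue) measures of two sets of cardinality $\mathfrak{c}$ forces the corresponding inequality between their numerosities. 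The paper then compares $E$ with $\bigcup_{j\leq N}E_{j}\cup \lbrack -1,-1+\varepsilon ]$, where the small disjoint interval absorbs the measure of the tail $\bigcup_{j>N}E_{j}$, converts the resulting $m_{L}$-inequality into $\mathfrak{num}\left( E\right) \leq \sum_{j\leq N}\mathfrak{num}\left( E_{j}\right) +\mathfrak{num}\left( [-1,-1+\varepsilon ]\right) $ by finite additivity and Theorem \ref{intervalli}, and lets $\varepsilon \rightarrow 0$ after taking standard parts. This measure-monotonicity of $\mathfrak{num}$, which your proposal never invokes, is precisely the missing ingredient.
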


\begin{proof} Let $E\in\wp(\mathbb{R})$; wlog, we assume $E\in\wp\left(\mathbb{R}_{\geq 0}\right)$, as the result for a generic $E$ will then follow easily by splitting $E=E^{+}\cup E^{-}$.
Let
\begin{equation*}
E=\bigcup_{j\in\mathbb{N}} E_{j}
\end{equation*}%
be a partition of $E$, with all $E_{j}$'s Lebesgue measurable. Let $\varepsilon=\frac{a}{b}\in[0,1)$; for $N$ large enough we have
\begin{equation*}
m_{L}\left( E\right) \leq m_{L}\left( \bigcup_{j=1}^{N}E_{j}\right) +\varepsilon.
\end{equation*}%
Now $E\cap \left[ -1,-1+\varepsilon \right] =\varnothing$, so  
\begin{equation*}
m_{L}\left( E\right) \leq m_{L}\left( \bigcup_{j=1}^{N}E_{j}\cup \left[
-1,-1+\varepsilon \right] \right).
\end{equation*}

By Property \ref{San Gennaro aiutaci tu} of our labelling, where $d=1$, we have
\begin{equation*}
\mathfrak{num}\left( E\right) \leq \mathfrak{num}\left(
\bigcup_{j=1}^{N}E_{j}\cup \left[ -1,-1+\varepsilon \right] \right).
\end{equation*}%
By Theorem \ref{intervalli} \[\mathfrak{num}\left(\bigcup_{j=1}^{N}E_{j}\cup \left[ -1,-1+\varepsilon \right] \right)=\sum_{j=1}^{N}\mathfrak{num}\left( E_{j}\right)+\mathfrak{num}\left([-1,-1+\varepsilon]\right)\sim \sum_{j=1}^{N}\mathfrak{num}\left( E_{j}\right) +\varepsilon \beta,\] hence
\begin{equation*}
\mu _{\beta }\left( E\right) \leq \sum_{j=1}^{N}\mu _{\beta }\left( E_{j}\right)
+\varepsilon \leq \sum_{j=1}^{\infty }\mu _{\beta }\left( E_{j}\right)
+\varepsilon. 
\end{equation*}%
The arbitrariness of $\varepsilon$ gives the desired inequality
\begin{equation*}
\mu _{\beta }\left( E\right) \leq \sum_{j=1}^{\infty }\mu _{\beta }\left(
E_{j}\right) .\qedhere
\end{equation*}
\end{proof}

We can now prove our desired final result:

\begin{theorem}
\label{ora et riora} $\mu_{\beta}(E)=\mu_{L}(E)$ for all Lebesgue measurable
sets $E\subseteq\mathbb{R}$.
\end{theorem}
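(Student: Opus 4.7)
The plan is to bootstrap the equality from rational intervals (where Theorem \ref{intervalli} already supplies it) up to arbitrary Lebesgue measurable sets in three stages: real intervals, open sets, then general measurable sets via approximation.

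First I would extend Theorem \ref{intervalli} to $\mu_\beta([a,b))=b-a$ for all real $a<b$. The rational case follows immediately from Theorem \ref{intervalli}, translation invariance, and finite additivity (Theorem \ref{sc}(i)). For general real endpoints, sandwich $[a,b)$ between rational half-open intervals $[p_1,q_1)\subseteq [a,b)\subseteq [p_2,q_2)$ with $|p_i-a|,|q_i-b|<\varepsilon$, apply Euclid's principle (Theorem \ref{1}(ii)), divide by $\beta$ and take the standard part; letting $\varepsilon\to 0$ yields the claim.

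Next, any open $U\subseteq\mathbb{R}$ decomposes as a disjoint countable union $\bigsqcup_n I_n$ of open intervals. Proposition \ref{atto di fede} gives $\mu_\beta(U)\le \sum_n\mu_\beta(I_n)$, while Theorem \ref{sc}(ii) supplies the reverse inequality. Since each $\mu_\beta(I_n)=m_L(I_n)$ by the previous step, $\mu_\beta(U)=\sum_n m_L(I_n)=m_L(U)$. For arbitrary Lebesgue measurable $E$, I would first reduce to the bounded case by writing $E=\bigsqcup_{n\in\mathbb{Z}}\bigl(E\cap[n,n+1)\bigr)$ and invoking the same sub/superadditivity combination. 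For $E\subseteq[a,b]$ and $\varepsilon>0$, outer regularity of $m_L$ produces an open $U\supseteq E$ with $m_L(U)<m_L(E)+\varepsilon$, and Euclid's principle together with the open-set case gives
$$\mu_\beta(E)\le \mu_\beta(U)=m_L(U)<m_L(E)+\varepsilon.$$
Applying the same upper bound to the measurable complement $[a,b]\setminus E$ and invoking finite additivity (Theorem \ref{sc}(i)) yields
$$\mu_\beta(E)=\mu_\beta([a,b])-\mu_\beta([a,b]\setminus E)\ge (b-a)-\bigl(m_L([a,b]\setminus E)+\varepsilon\bigr)=m_L(E)-\varepsilon,$$
and the arbitrariness of $\varepsilon$ closes the argument.

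The main obstacle is this last step. Because $\mu_\beta$ is only known to be subadditive on general measurable sets, one cannot assert $\sigma$-additivity directly on the full $\sigma$-algebra. The workaround is to use finite additivity inside a bounded reference interval to convert an outer-regularity upper bound on the complement into a matching lower bound on $E$; the unbounded case then hinges on the pleasant fact that, whenever we decompose along a measurable partition into pieces whose $\mu_\beta$-measures we already control, subadditivity and superadditivity coincide to give honest countable additivity on that particular decomposition.
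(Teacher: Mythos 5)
Your proof is correct, but it takes a genuinely different route from the paper's. The paper disposes of the theorem in one stroke: by Theorems \ref{sc} and \ref{intervalli} and Proposition \ref{atto di fede}, the restriction of $\mu_{\beta}$ to the Lebesgue $\sigma$-algebra is countably additive (subadditivity plus superadditivity), translation invariant, normalized and null on $\varnothing$, and it then invokes the classical uniqueness theorem characterizing Lebesgue measure among such set functions. You instead inline the regularity argument that underlies that uniqueness theorem: sandwiching between rational intervals to pass to real endpoints, the canonical decomposition of an open set into disjoint intervals combined with the sub/superadditivity pair, and outer regularity applied both to a bounded measurable $E$ and to its complement in $[a,b]$, the complement step converting the one-sided upper bound into the matching lower bound via finite additivity --- which is exactly the device one uses to prove the uniqueness theorem itself. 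What the paper's argument buys is brevity, at the cost of citing the characterization of $m_{L}$ as a black box (together with the small observation, needed on the completed $\sigma$-algebra, that monotonicity of a measure forces $\mu_{\beta}$ to vanish on Lebesgue-null sets); what yours buys is a self-contained proof whose only inputs are properties already established in the paper, including an explicit use of Euclid's principle (Theorem \ref{1}(ii)) for the monotonicity of $\mu_{\beta}$, which the paper's proof never needs to mention. Your handling of the unbounded case via the partition $E=\bigcup_{n\in\mathbb{Z}}\left(E\cap[n,n+1)\right)$ is sound, since Proposition \ref{atto di fede} and Theorem \ref{sc}(ii) both apply to arbitrary countable measurable partitions; the only cosmetic point worth adding is that $\mu_{\beta}(\{x\})=st(1/\beta)=0$, so that open, closed and half-open intervals all receive their length, which your steps on open sets and on $[a,b]$ implicitly use.
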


\begin{proof} By Theorems \ref{sc},\ref{intervalli} and by Proposition \ref{atto di fede} we have that $\mu_{\beta}$, restricted to Lebesgue measurable sets, has the empty set property, it is countably additive (as it is both subadditive and superadditive), it is invariant under translation and it is normalized. Hence it must coincide with the Lebesgue measure. \end{proof}

The arguments above could be generalized to prove that for any measurable
set $A\subset \mathbb{R}^{N}$ we have that 
\begin{equation*}
m_{N}(A)=st\left( \frac{\mathfrak{num}\left( A\right) }{\beta ^{N}}\right) .
\end{equation*}%
Similarly, we can define the "fractal measure" any fractal set $A\subset 
\mathbb{R}^{N}$ as follows:%
\begin{equation*}
\mathfrak{m}_{d}(A)=st\left( \frac{\mathfrak{num}\left( A\right) }{\beta ^{d}%
}\right) ,\ \ \ d\in \left[ 0,N\right] .
\end{equation*}%
We are not gonna study this fractal measure in detail here; however, it is
not difficult to chek that $\mathfrak{m}_{d}(A)$ concides with the
normalized Hausdorff measure $H_{d}$.

\subsection{Numerosity and nonstandard measures}

It is well known that the Lebesgue measure can be realized using a counting
procedure based on hyperfinite sets: this is, e.g., at the core of the
construction of Loeb measures, which is the most known and used of such
constructions. Loeb measures were introduced in mid-70's, see \cite{Loeb};
see also \cite{Ross} for an overview of Loeb methods and applications, and 
\cite{Ross2} for an overview of other applications of nonstandard analysis
in measure theory. To confront Loeb construction with our approach, here we
shortly recall Loeb construction following Goldblatt's presentantion, see 
\cite{Goldblatt}, Section 16.8.

Let $N$ be an infinite hypernatural number, and let $S=\{\frac{k}{N}\mid
-N^{2}\leq k\leq N^{2}, k$ hyperinteger$\}$. Let $\wp_{I}(S)$ the set of
internal subsets of $S$, and for every $A\in\wp_{I}(S)$ let

\begin{equation*}
m(A):=st\left( \frac{|A|}{N}\right) ,
\end{equation*}%
where $|A|$ denotes the internal cardinality of $A$. Then $m:\wp _{I}\left(
S\right) \rightarrow \lbrack 0,+\infty ]$ is a finitely additive measure on $%
\wp _{I}\left( S\right) $. The Loeb measure is obtained by means of the
usual Carath\'{e}odory extension procedure applied to $m$ (we will denote
also the Loeb measure by $m$). What Loeb proved is that the Lebesgue measure
can be seen as a restriction of $m$, in the sense that for every Lebesgue
measurable set $X$ the Lebesgue measure $m_{L}(X)$ is equal to the Loeb
measure of the so-called pre-shadow $st^{-1}(X)$ of $X$, namely

\begin{equation*}
m_{L}(X)=st\left( m\left( st^{-1}\left( X\right) \right) \right) ,
\end{equation*}%
where $st^{-1}(X)=\{\xi \in S\mid st(\xi )\in X\}$.

The similarity between our approach is that we have that, actually, $\mu
_{\beta }$ is obtained as the standard part of a quotient similar to Loeb's
one. In fact, $\mu _{\beta }\left( A\right) =st\left( \frac{|A^{\ast }\cap
\Gamma |}{|[0,1)^{\ast }\cap \Gamma |}\right) $, where:

\begin{enumerate}
\item $|\cdot|$ denotes the internal cardinality of a set;

\item $\Gamma$ is the hyperfinite set obtained by taking $%
\lim_{\lambda\uparrow\Lambda}\lambda\cap\mathbb{R}$.
\end{enumerate}

However, in our approach the use of Carath\'{e}odory extension procedure, as
well as of pre-shadows, is substituted with the choice of a particular
labelling set, which can be equivalently seen as a particular choice of the
hyperfinite set used in the quotient. A similar result in a general
nonstandard setting was first obtained by Bernstein and Wattenberg (see \cite%
{BW}; see also \cite{CUTLAND}, Section 2, for a comparison of
Bernstein-Wattenberg's result and Loeb measures), who in fact proved that
there exists hyperfinite subsets $S\subseteq \lbrack 0,1]^{\ast }$ such that
for all Lebesgue measurable $A\subseteq \lbrack 0,1]$

\begin{equation*}
m_{L}(A)=st\left( \frac{|A^{\ast }\cap S|}{|S|}\right) .
\end{equation*}

As we said before, Theorem \ref{ora et riora} provides a new proof of the
above result by taking%
\begin{equation*}
S=\lim_{\lambda \uparrow \Lambda }\ \left( \lambda \cap \mathbb{R}\right) .
\end{equation*}

Finally, the problem of the relationship between numerosities and Lebesgue
measure in general has been addressed in \cite{BDNB1, BDNB}. In these
papers, the authors introduced the notion of "elementary numerosity" (see 
\cite{BDNB1}, Definition 1.1), that we recall:

\begin{definition*}
An elementary numerosity on a set $\Omega$ is a function $n :
\wp\left(\Omega\right)\rightarrow [0, +\infty)$ defined on all subsets of $%
\Omega$, taking values in the non-negative part of an ordered field $\mathbb{%
F}\supseteq\mathbb{R}$, and such that the following two conditions are
satisfied:

\begin{enumerate}
\item $n({x}) = 1$ for every point $x\in\Omega$;

\item $n(A \cup B) = n(A) + n(B)$ whenever A and B are disjoint.
\end{enumerate}
\end{definition*}

The main connection between the "elementary numerosity" and Lebesgue measure
is given by the following result, which is one of the instances of Theorem
3.1 in \cite{BDNB}:

\begin{theorem*}
There exists an elementary numerosity $\mathfrak{n}:\wp \left( \mathbb{R}%
\right) \rightarrow \lbrack 0,+\infty )_{\mathbb{F}}$ such that $%
m_{L}(X)=st\left( \frac{\mathfrak{n}\left( X\right) }{\mathfrak{n}\left(
[0,1)\right) }\right) $ for every Lebesgue measurable set $X$.
\end{theorem*}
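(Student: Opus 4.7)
The plan is to take $\mathfrak{n}$ to be exactly the numerosity function $\mathfrak{num}$ constructed earlier, restricted to $\wp(\mathbb{R})$, with values in $\mathbb{F}=\mathbb{E}$. Since $|\wp(\mathbb{R})|<\kappa$, every subset of $\mathbb{R}$ lies in $\Lambda$, so $\mathfrak{num}(X)$ is defined for every $X\in\wp(\mathbb{R})$; moreover, by the Finite sets principle in Theorem \ref{1}(i) together with Euclid's principle (ii), $\mathfrak{num}(X)$ is always a non-negative element of $\mathbb{E}$, so $\mathfrak{n}:\wp(\mathbb{R})\to[0,+\infty)_{\mathbb{E}}$ is well-defined.

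Next, I would verify the two defining conditions of an elementary numerosity. Condition (1), $\mathfrak{n}(\{x\})=1$ for every $x\in\mathbb{R}$, is immediate from Theorem \ref{1}(i), since $\{x\}$ is finite of cardinality one. Condition (2), finite additivity on disjoint sets, is immediate from Theorem \ref{1}(v), the Sum principle. Hence $\mathfrak{n}$ satisfies Definition~1.1 of \cite{BDNB1}.

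Finally, for every Lebesgue measurable set $X\subseteq\mathbb{R}$, Theorem \ref{ora et riora} gives
\begin{equation*}
m_L(X)=\mu_\beta(X)=st\!\left(\frac{\mathfrak{num}(X)}{\beta}\right)=st\!\left(\frac{\mathfrak{n}(X)}{\mathfrak{n}([0,1))}\right),
\end{equation*}
since by definition $\beta=\mathfrak{num}([0,1))=\mathfrak{n}([0,1))$. This finishes the proof.

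There is essentially no obstacle: the whole work has already been done in building the labelling $\mathfrak{B}(\mathbb{A})$ of Section~\ref{SL} and in proving Theorem \ref{ora et riora}. The statement is really just the observation that the numerosity function furnished by our framework provides a concrete instance of the abstract object introduced in \cite{BDNB1}, with the bonus that the ambient field $\mathbb{F}$ is the canonical Keisler field $\mathbb{E}$ rather than a generic hyperreal field.
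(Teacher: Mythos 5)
Your proposal is correct and takes essentially the same route as the paper, which likewise observes that $\mathfrak{num}$ restricted to $\wp\left(\mathbb{R}\right)$ is an elementary numerosity (the two conditions following from the finite sets principle and the sum principle of Theorem \ref{1}) and then derives the formula from Theorem \ref{ora et riora} with $\beta =\mathfrak{num}\left( [0,1)\right) $.
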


Once again, Theorem \ref{ora et riora} provides another proof of the above
result, as $\mathfrak{num}$, when restricted to $\wp \left( \mathbb{R}%
\right) $ is, in fact, an elementary numerosity on $\mathbb{R}$.

The interest of Theorem \ref{ora et riora} lies on the fact that it is based
on a numerosity theory which satisfies many other additional properties.

\end{document}